\newtheorem{theorem}{Theorem}[section]
\newtheorem{lemma}[theorem]{Lemma}
\newtheorem{proposition}[theorem]{Proposition}
\newtheorem{corollary}[theorem]{Corollary}
\theoremstyle{definition}
\newtheorem{definition}[theorem]{Definition}
\theoremstyle{remark}
\newtheorem{remark}[theorem]{Remark}
\numberwithin{equation}{section}
\newcommand{\LP}[2]{L^{#1}(X_{#2}, \Sigma_{#2}, m_{#2})}
\newcommand{\NCLP}[2]{L^{#1}(\mathcal{M}_{#2}, \tau_{#2})}
\newcommand{\M}{\mathcal{M}}
\newcommand{\cS}{\mathcal{S}}
\newcommand{\B}{\mathcal{B}}
\newcommand{\tM}{\widetilde{\mathcal{M}}}
\newcommand{\I}{1\!{\mathrm l}}
\newcommand{\bR}{{\mathbb{R}}}
\newcommand{\bN}{{\mathbb{N}}}
\begin{document}

\title{Maps on Noncommutative Orlicz Spaces}

\author{Louis E. Labuschagne}
\address{Department of Maths, Applied Maths and Astronomy,
P.O.Box 392, University of South Africa, 0003 Pretoria, South
Africa} 
\email{labusle@unisa.ac.za}
\author{W{\l}adys{\l}aw A. Majewski}
\address{Institute of Theoretical Physics and Astrophysics, Gda{\'n}sk
University, Wita Stwo\-sza~57, 80-952 Gda{\'n}sk, Poland} 
\email{fizwam@univ.gda.pl}
\subjclass[2000]{46L52, 47B33 (Primary); 47L90 (Secondary)}
\date{\today}
\keywords{}

\begin{abstract} 
A generalization of the Pistone-Sempi argument, demonstrating the utility of non-commutative Orlicz spaces, is presented. The question of lifting positive maps defined on von Neumann algebra to maps on corresponding noncommutative Orlicz spaces is discussed. In particular, we describe those Jordan $*$-morphisms on semifinite von Neumann algebras 
which in a canonical way induce quantum composition operators on noncommutative Orlicz spaces. Consequently, it is proved that the framework of noncommutative Orlicz spaces is well suited for an analysis of a 
large class of interesting
noncommutative dynamical systems.
\end{abstract}

\maketitle
\section{Introduction}
This article is devoted to a study of maps on noncommutative Orlicz spaces. Noncommutative Orlicz spaces can be defined either in a very algebraic way (see \cite{Kun}, \cite{ARZ}) or employing Banach space geometry (see \cite{DDdP}). The second approach based on the concept of Banach Function Spaces, among other properties, readily indicates similarities with the classical origins as well as clarifies why Orlicz spaces, being a special case of rearrangement-invariant spaces, are well suited for interpolation techniques. As these features are important for our analysis, the second approach is taken.

Section 2 consists of some preliminaries and is expository. In section 3 noncommutative regular statistical models are defined
and a noncommutative generalization of the Pistone-Sempi theorem is proved. Composition operators are introduced in 
section 4. Section 5 is devoted to detailed analysis of positive maps on von Neumann algebras.
We prove that unital pure CP maps and a large class of Jordan morphisms induce bounded maps on noncommutative Orlicz 
spaces. A characterization of Jordan morphisms which induce composition operators is given in section 6. 
Since the described maps can be 
considered as archetypes of dynamical maps for noncommutative regular statistical models
both sections, $5$ and $6$, can be treated as the first step towards the foundation of the theory of noncommutative dynamical systems associated with regular statistical models.
On the other hand, the main result of section 6 (see Theorem \ref{5.1}) can be considered as a highly nontrivial
noncommutative counterpart of the Banach-Lamperti theorem.

\section{Preliminaries}

General von Neumann algebraic notation will be based on that of \cite{BRo},  
\cite{Tak} with $\M$ denoting a von Neumann algebra and $\I$ the identity  
element thereof. As regards $L_p$-spaces we will use \cite{Tp} and \cite{FK} 
as basic references for the non-commutative context. In this paper we will 
restrict attention to the case of semifinite von Neumann algebras. The fns 
trace of such an algebra $\M$ will be denoted by $\tau_{\M} = \tau$. The projection 
lattice of a von Neumann algebra $\M$ will be denoted by $\mathbb{P}(\M)$.

By the term an \emph{Orlicz function} we understand a convex function 
$\varphi : [0, \infty) \to [0, \infty]$ satisfying $\varphi(0) = 0$ and $\lim_{u \to \infty} 
\varphi(u) = \infty$, which is neither identically zero nor infinite valued on all of 
$(0, \infty)$, and which is left continuous at $b_\varphi = \sup\{u > 0 : \varphi(u) < 
\infty\}$. The constant $a_\varphi = \inf\{u > 0 : \varphi(u) > 0\}$ also plays an important 
role in studying Orlicz functions. It is worth pointing out that any Orlicz function 
must also be increasing, and continuous on $[0, b_\varphi]$. 

Each such function induces a complementary Orlicz function $\varphi^*$ which is defined by 
$\varphi^*(u) = \sup_{v > 0}(uv - \varphi(v))$. The formal ``inverse'' $\varphi^{-1}: [0, \infty) 
\to [0, \infty]$ of an Orlicz function is defined by the formula
$$\varphi^{-1}(t) = \sup\{s : \varphi(s) \leq t\}.$$Denoting this function by $\varphi^{-1}$ is of course just a notational convention for Orlicz functions. It is only really in the case where $a_\varphi = 0$ and $b_\varphi = \infty$ that it is an inverse function in the true sense of the word. (See also Lemma \ref{lemma4}.)  

Let $\LP{0}{}$ be the space of measurable 
functions on some $\sigma$-finite measure space $(X, \Sigma, m)$. The Orlicz space 
$\LP{\varphi}{}$ associated with  $\varphi$ is defined to be the set $$L^{\varphi} = \{f \in 
L^0 : \varphi(\lambda |f|) \in L^1 \quad \mbox{for some} \quad \lambda = \lambda(f) > 0\}.$$
This space turns out to be a linear subspace of $L^0$ which becomes a Banach space when 
equipped with the so-called Luxemburg-Nakano norm 
$$\|f\|_\varphi = \inf\{\lambda > 0 : \|\varphi(|f|/\lambda)\|_1 \leq 1\}.$$
An equivalent norm (the Orlicz norm in Amemiya form) is given by 
$$\|f\|^0_\varphi = \inf_{k > 0}(1 + \|\varphi(k|f|)\|_1)/k.$$ 

We say that $\varphi$ satisfies $\Delta_2$ \emph{for all} $u$ if there exists a positive 
constant $K$ such that $\varphi(2u) \leq K \varphi(u)$ for all $u > 0$. In such a case 
$$(L^\varphi, \|\cdot\|_{\varphi})^* = (L^{\varphi^*}, \|\cdot\|^0_{\varphi^*}) \quad 
\mbox{and}\quad (L^\varphi, \|\cdot\|^0_{\varphi})^* = (L^{\varphi^*}, \|\cdot\|_{\varphi^*})$$  

Let $\varphi$ be a given Orlicz function. In the context of semifinite von Neumann algebras 
$\M$ equipped with an fns trace $\tau$, the space of all $\tau$-measurable operators 
$\widetilde{\M}$ (equipped with the topology of convergence in measure) plays the role of 
$L^0$. In the specific case where $\varphi$ is a so-called Young's function (i.e. a map $\varphi: \bR \to [0, \infty]$ having the properties of Orlicz function with additional symmetry $\varphi(x) = \varphi(-x)$), Kunze \cite{Kun} 
used this identification to define the associated noncommutative Orlicz space to be 
$$\NCLP{\varphi}{} = \cup_{n=1}^\infty n \{f \in \widetilde{\M} : \tau(\varphi(|f|)) \leq 1\}$$ 
and showed that this too is a linear space which becomes a Banach space when equipped with the 
Luxemburg-Nakano norm $$\|f\|_\varphi = \inf\{\lambda > 0 : \tau(\varphi(|f|/\lambda)) \leq 
1\}.$$Using the linearity it is not hard to see that $$\NCLP{\varphi}{} = \{f \in 
\widetilde{\M} : \tau(\varphi(\lambda|f|)) < \infty  \quad \mbox{for some} \quad \lambda = 
\lambda(f) > 0\}$$Thus there is a clear analogy with the commutative case.

Given an element $f \in \widetilde{\M}$ and $t \in [0, \infty)$, the generalised singular 
value $\mu_t(f)$ is defined by $\mu_t(f) = \inf\{s \geq 0 : \tau(\I - e_s(|f|)) \leq t\}$ 
where $e_s(|f|)$, $s \in \mathbb{R}$, is the spectral resolution of $|f|$. The function $t \to 
\mu_t(f)$ will generally be denoted by $\mu(f)$. For details on the generalised singular value 
see \cite{FK}.  (This directly extends classical notions where for any $f \in \LP{\infty}{}$, 
the function $(0, \infty) \to [0, \infty] : t \to \mu_t(f)$ is known as the decreasing 
rearrangement of $f$.) We proceed to briefly review the concept of a Banach Function Space of 
measurable functions on $(0, \infty)$ (see \cite{DDdP}.) A function norm 
$\rho$ on $L^0(0, \infty)$ is defined to be a mapping $\rho : L^0_+ \to [0, \infty]$ satisfying
\begin{itemize}
\item $\rho(f) = 0$ iff $f = 0$ a.e.  
\item $\rho(\lambda f) = \lambda\rho(f)$ for all $f \in L^0_+, \lambda > 0$.
\item $\rho(f + g) \leq \rho(f) + \rho(g)$ for all .
\item $f \leq g$ implies $\rho(f) \leq \rho(g)$ for all $f, g \in L^0_+$.
\end{itemize}
Such a $\rho$ may be extended to all of $L^0$ by setting $\rho(f) = \rho(|f|)$, in which case 
we may then define $L^{\rho}(0, \infty) = \{f \in L^0(0, \infty) : \rho(f) < \infty\}$. If 
now $L^{\rho}(0, \infty)$ turns out to be a Banach space when equipped with the norm 
$\rho(\cdot)$, we refer to it as a Banach Function Space. 
If $\rho(f) \leq \lim\inf_n \rho(f_n)$ 
whenever $(f_n) \subset L^0$ converges almost everywhere to $f \in L^0$, we say that $\rho$ 
has the Fatou Property. (This is equivalent to the requirement that $\rho(f_n) \uparrow \rho(f)$ whenever $0 \leq f_n \uparrow f$ a.e. \cite[11.4]{AB}.) If less generally this implication only holds for $(f_n) \cup \{f\} 
\subset L^{\rho}$, we say that $\rho$ is lower semi-continuous. If further the situation $f 
\in L^\rho$, $g \in L^0$ and $\mu_t(f) = \mu_t(g)$ for all $t > 0$, forces $g \in L^\rho$ and 
$\rho(g) = \rho(f)$, we call $L^{\rho}$ rearrangement invariant (or symmetric). (The concept of a Banach function norm can of course in a similar fashion equally well be defined for $L^0(X, \Sigma, \nu)$, where $(X, \Sigma, \nu)$ is an arbitrary measure space.)

Using the above context Dodds, Dodds and de Pagter \cite{DDdP} formally defined the noncommutative space 
$L^\rho(\widetilde{\M})$ to be $$L^\rho(\widetilde{\M}) = \{f \in \widetilde{\M} : \mu(f) \in 
L^{\rho}(0, \infty)\}$$and showed that if $\rho$ is lower semicontinuous and $L^{\rho}(0, 
\infty)$ rearrangement-invariant, $L^\rho(\widetilde{\M})$ is a Banach space when equipped 
with the norm $\|f\|_\rho = \rho(\mu(f))$. The space $L^\rho(\widetilde{\M})$ is said to be \emph{fully 
symmetric} if for any $f \in L^\rho(\widetilde{\M})$ and $g \in \widetilde{\M}$ the property 
$$\int_0^\alpha \mu_t(|g|)\, \mathrm{d}t \leq \int_0^\alpha\mu_t(|f|)\, \mathrm{d}t \quad \mbox{for all} 
\quad \alpha > 0$$ensures that $g \in L^\rho(\widetilde{\M})$ with $\rho(g) \leq \rho(f)$. Now for any 
Orlicz function $\varphi$, the Orlicz 
space $L^\varphi(0, \infty)$ is known to be a rearrangement invariant Banach Function space 
with the norm having the Fatou Property \cite[Theorem 4.8.9]{BS}. Thus on selecting $\rho$ to be 
one of $\|\cdot\|_\varphi$ or $\|\cdot\|^0_\varphi$, the very general framework of Dodds, 
Dodds and de Pagter presents us with an alternative approach to realising noncommutative 
Orlicz spaces. We pause to show that this approach canonically contains the spaces of Kunze 
\cite{Kun}. Recall that any Orlicz function is in fact continuous, non-negative and increasing 
on $[0, b_\varphi)$. So if (as is the case in \cite{Kun}) we assume that $b_\varphi = \infty$, 
then for any $\lambda > 0$ and any $f \in \widetilde{\M}$, we always have that 
$$\tau\left(\varphi\left(\frac{1}{\lambda}|f|\right)\right) = \int_0^\infty \varphi\left(\frac{1}{\lambda}\mu_t(|f|)\right)\, 
\mathrm{d}t$$by \cite[2.8]{FK}. The equivalence of the two approaches in this setting, is a trivial 
consequence of this equality. 

However in the case where $b_\varphi < \infty$, the equivalence of the two approaches is not immediately obvious. If $b_\varphi < \infty$, then for any $f \in \widetilde{\M}$, we can always give meaning to $\varphi(\mu_t(f))$. However $\varphi(|f|)$ may not even exist as an element of $\widetilde{\M}$! We proceed to show that even here, the two approaches yield identical spaces.

\begin{lemma}\label{DPvsKlemma} Let $\varphi$ be an Orlicz function and $f \in \widetilde{\M}$ a $\tau$-measurable element for which $\varphi(|f|)$ is again $\tau$-measurable. Extend $\varphi$ to a function on $[0, \infty]$ by setting $\varphi(\infty) = \infty$. Then $\varphi(\mu_t(f)) = \mu_t(\varphi(|f|))$ for any $t \geq 0$. Moreover $\tau(\varphi(|f|)) = \int_0^\infty \varphi(\mu_t(|f|))\, \mathrm{d}t$.
\end{lemma}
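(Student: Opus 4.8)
The plan is to reduce everything to the interplay between the spectral resolution of $|f|$, the generalised inverse $\varphi^{-1}$, and the defining formula $\mu_t(g)=\inf\{s\ge 0:\tau(\I-e_s(|g|))\le t\}$; the measurability hypothesis is what guarantees that $\varphi(|f|)$ is a genuine positive element of $\tM$ whose singular value function may be handled inside the Fack--Kosaki calculus \cite{FK}, even when $b_\varphi<\infty$ and $\varphi$ assumes the value $\infty$. Since $\mu_t(f)=\mu_t(|f|)$, I may assume $f=|f|\ge 0$, and I write $d_s(g)=\tau(\I-e_s(|g|))$ for the distribution function, so that $\mu_t(g)=\inf\{s\ge 0:d_s(g)\le t\}$.

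The heart of the matter is the distribution-function identity
$$d_s(\varphi(|f|))=d_{\varphi^{-1}(s)}(|f|)\qquad(s\ge 0).$$
By the Borel functional calculus the spectral projection of $\varphi(|f|)$ over $(s,\infty)$ is $e_{\{u:\varphi(u)>s\}}(|f|)$, so it suffices to show $\{u\ge 0:\varphi(u)>s\}=(\varphi^{-1}(s),\infty)$ exactly. Monotonicity of $\varphi$ gives $\varphi(u)>s$ for every $u>\varphi^{-1}(s)$; conversely, for $u<\varphi^{-1}(s)$ the definition of the supremum produces a $v\ge u$ with $\varphi(v)\le s$, whence $\varphi(u)\le\varphi(v)\le s$, and then left-continuity of $\varphi$ forces $\varphi(\varphi^{-1}(s))\le s$ as well. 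Thus the endpoint $\varphi^{-1}(s)$ is excluded and the two sets coincide; this is precisely where the left-continuity built into the definition of an Orlicz function, together with the generalised-inverse relations of Lemma \ref{lemma4}, is used. The argument is insensitive to whether $b_\varphi$ is finite.

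With this identity I would compare the two nonnegative, nonincreasing, right-continuous functions $t\mapsto\mu_t(\varphi(|f|))$ and $t\mapsto\varphi(\mu_t(|f|))$ on $[0,\infty)$. Equimeasurability of a singular value function with its operator gives $|\{t:\mu_t(\varphi(|f|))>s\}|=d_s(\varphi(|f|))$, while $\varphi(\mu_t(|f|))>s\iff\mu_t(|f|)>\varphi^{-1}(s)$ gives $|\{t:\varphi(\mu_t(|f|))>s\}|=d_{\varphi^{-1}(s)}(|f|)$; by the displayed identity these agree. One checks that $t\mapsto\varphi(\mu_t(|f|))$ is right-continuous, the only possible failure being where $\mu_t(|f|)$ increases up to $b_\varphi$, which is covered exactly by left-continuity of $\varphi$ at $b_\varphi$. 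Since a nonincreasing right-continuous function equals the generalised inverse of its own distribution function, two such functions with equal distribution functions coincide, yielding $\mu_t(\varphi(|f|))=\varphi(\mu_t(|f|))$ for all $t\ge 0$.

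The integral formula is then immediate: applying the standard identity $\tau(h)=\int_0^\infty\mu_t(h)\,\mathrm{d}t$ \cite{FK} to the positive element $h=\varphi(|f|)\in\tM$ and substituting the singular-value identity just proved gives $\tau(\varphi(|f|))=\int_0^\infty\mu_t(\varphi(|f|))\,\mathrm{d}t=\int_0^\infty\varphi(\mu_t(|f|))\,\mathrm{d}t$, with both sides permitted to equal $+\infty$. I expect the main obstacle to be the spectral and endpoint bookkeeping in the distribution-function identity when $b_\varphi<\infty$: one must confirm that $\varphi(|f|)$ is legitimately $\tau$-measurable (this is the hypothesis) and that the value $\infty$ and the left-continuity at $b_\varphi$ interact correctly so that no spectral mass is mislocated. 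Once that is settled, the remaining equimeasurability and rearrangement facts are routine consequences of \cite{FK}.
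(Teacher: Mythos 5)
Your proof is correct, but it takes a genuinely different route from the paper's. The paper reduces to a maximal abelian von Neumann subalgebra $\M_0$ to which both $|f|$ and $\varphi(|f|)$ are affiliated, establishes the norm identity $\|\varphi(|f|e)\| = \varphi(\||f|e\|)$ for projections $e \in \M_0$ (splitting into the cases where $\varphi$ is bounded, respectively unbounded, on $\sigma(|f|e)$), and then feeds this into the Fack--Kosaki representation $\mu_t(g) = \inf\{\|ge\| : e \in \M_0,\ \tau(\I - e) \leq t\}$. You instead argue at the level of distribution functions: the Borel spectral mapping rule plus the exact level-set identity $\{u \geq 0 : \varphi(u) > s\} = (\varphi^{-1}(s), \infty)$ --- the step where left-continuity of $\varphi$ enters --- yields $d_s(\varphi(|f|)) = d_{\varphi^{-1}(s)}(|f|)$, and the pointwise identity then follows because $t \mapsto \mu_t(\varphi(|f|))$ and $t \mapsto \varphi(\mu_t(|f|))$ are both nonincreasing, right-continuous, and (by this computation) equimeasurable. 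This is essentially the classical commutative rearrangement argument lifted through Fack--Kosaki equimeasurability \cite{FK}. What your route buys: the endpoint bookkeeping (the value $\infty$, the point $b_\varphi$) is handled explicitly through left-continuity, whereas the paper's closing step --- commuting $\varphi$ with the infimum over projections --- is left implicit and itself requires exactly such a continuity argument, since $\varphi$ need not be right-continuous at $b_\varphi$. What the paper's route buys: it is shorter given the operator-algebraic toolkit and stays entirely inside the algebra, never invoking the uniqueness of a right-continuous decreasing function with prescribed distribution. One small point for completeness: your equimeasurability argument gives equality for $t \in (0,\infty)$; the case $t = 0$ then follows from right-continuity of both sides at $0$, using the convention $\varphi(\infty) = \infty$ when $\mu_0(f) = \infty$.
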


\begin{proof}
The second claim will follow from the first by an application of \cite[2.7]{FK}. To prove 
the first claim we may replace $\M$ by a maximal abelian von Neumann subalgebra $\M_0$ to which 
both $|f|$ and $\varphi(|f|)$ are affiliated (see \cite[2.3(1)]{FK}). Let $e$ be any projection
 in this subalgebra. Now notice that $\sigma(|f|) \subset [0, \infty)$. 

First suppose that $\varphi$ is bounded on $\sigma(|f|e)$. (By the Borel functional calculus 
$\varphi(|f|e)$ will then of course be bounded.) Since $\lim_{u\to \infty}\varphi(u) = 
\infty$, we must then have that $\sigma(|f|e)$ itself is a bounded subset of $[0, \infty)$. 
Thus $|f|e$ must be bounded. By spectral theory for positive elements, we now have that 
$\||f|e\| = \mathrm{max}\{\lambda : \lambda \in \sigma(|f|e)\}$. Since $\varphi$ is increasing 
and non-negative on $[0, \infty)$, the Borel functional calculus also ensures that 
$$\varphi(\||f|e\|) = \mathrm{max}\{\varphi(\lambda) : \lambda \in \sigma(|f|e)\} = 
\|\varphi(|f|e)\|.$$

If $\varphi$ is not bounded on $\sigma(|f|e)$, then $\|\varphi(|f|e)\| = 
\sup\{\varphi(\lambda) : \lambda \in \sigma(|f|e)\} = \infty$. We proceed to show that then 
$\varphi(\||f|e\|) = \infty$. If now $\sigma(|f|e)$ was an unbounded subset of $[0, \infty)$, 
we would already have $\||f|e\| = \infty$, and hence $\varphi(\||f|e\|) = \infty$ as required. 
Thus let $\sigma(|f|e)$ be a bounded subset of $[0, \infty)$. As noted previously, this forces 
$\||f|e\| = \mathrm{max}\{\lambda : \lambda \in \sigma(|f|e)\}$. Since $\varphi$ is increasing 
on $[0, \infty]$ with $\varphi(0) = 0$, we must have $\varphi(\||f|e\|) \geq \varphi(\lambda) 
\geq 0$ for any $\lambda \in \sigma(|f|e)$. The fact that $\varphi$ is unbounded on 
$\sigma(|f|e)$ therefore forces $\varphi(\||f|e\|) = \infty$ as required.
The rest follows from (cf \cite[2.3(1), and 2.5(iv)]{FK}): $\mu_t(g) = 
\inf\{ ||ge||; e \in \M_0 \ {\rm with} \ \tau(\I - e) \leq t \}$.
\end{proof}

\begin{proposition}\label{DPvsK} Let $\varphi$ be an Orlicz function and let $f \in \widetilde{\M}$ be given. There exists some 
$\alpha > 0$ so that $\int_0^\infty \varphi(\alpha\mu_t(|f|))\, \mathrm{d}t < \infty$ if and only if there exists 
$\beta > 0$ so that $\varphi(\beta|f|) \in \widetilde{\M}$ and $\tau(\varphi(\beta|f|)) < \infty$. Moreover $$\|\mu(f)\|_\varphi = \inf\{\lambda > 0 : \varphi\left(\frac{1}{\lambda}|f|\right) \in \widetilde{\M}, \tau\left(\varphi\left(\frac{1}{\lambda}|f|\right)\right) \leq 1\}.$$
\end{proposition}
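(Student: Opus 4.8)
The plan is to derive everything from Lemma~\ref{DPvsKlemma} together with the elementary fact that the integrand $t \mapsto \varphi(\alpha\mu_t(|f|))$ is nonnegative and nonincreasing in $t$ (since $\mu(|f|)$ decreases and $\varphi$ increases). Throughout I would use the homogeneity $\mu_t(\beta|f|) = \beta\mu_t(|f|)$ and the identity $\mu_t(f) = \mu_t(|f|)$. The implication ``$\beta$-side $\Rightarrow$ $\alpha$-side'' is then immediate: if $\varphi(\beta|f|) \in \widetilde{\M}$ with $\tau(\varphi(\beta|f|)) < \infty$, then applying Lemma~\ref{DPvsKlemma} to the element $\beta f$ gives $\int_0^\infty \varphi(\beta\mu_t(|f|))\,\mathrm{d}t = \tau(\varphi(\beta|f|)) < \infty$, so one may simply take $\alpha = \beta$.

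For the converse, suppose $\int_0^\infty \varphi(\alpha\mu_t(|f|))\,\mathrm{d}t < \infty$. Because the integrand is nonincreasing, it can take the value $+\infty$ only on an initial interval $[0,t_0)$, and finiteness of the integral forces $t_0 = 0$; hence $\alpha\mu_t(|f|) \le b_\varphi$ for every $t > 0$. Letting $t \to 0^+$ and using $\mu_{0^+}(|f|) = \sup_{t>0}\mu_t(|f|) = \|f\|$ gives $\alpha\|f\| \le b_\varphi$, so that when $b_\varphi < \infty$ the element $f$ is necessarily bounded. Now fix any $\beta$ with $0 < \beta < \alpha$. If $b_\varphi < \infty$, then $\beta\|f\| \le (\beta/\alpha)b_\varphi < b_\varphi$, whence $\sigma(\beta|f|) \subset [0,b_\varphi)$, a region on which $\varphi$ is finite and continuous, so $\varphi(\beta|f|)$ is bounded and lies in $\M \subset \widetilde{\M}$. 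If instead $b_\varphi = \infty$, then $\varphi$ is finite valued and the spectral projections $\I - e_s(\varphi(\beta|f|)) = \I - e_{\varphi^{-1}(s)/\beta}(|f|)$ have finite trace for large $s$ since $f \in \widetilde{\M}$, so again $\varphi(\beta|f|) \in \widetilde{\M}$. In either case Lemma~\ref{DPvsKlemma} applies to $\beta f$, and monotonicity of $\varphi$ yields $\tau(\varphi(\beta|f|)) = \int_0^\infty \varphi(\beta\mu_t(|f|))\,\mathrm{d}t \le \int_0^\infty \varphi(\alpha\mu_t(|f|))\,\mathrm{d}t < \infty$, which is the assertion.

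For the norm identity, set $A = \{\lambda > 0 : \int_0^\infty \varphi(\mu_t(f)/\lambda)\,\mathrm{d}t \le 1\}$ and let $B$ denote the set on the right-hand side; by definition of the Luxemburg--Nakano norm on $L^\varphi(0,\infty)$ one has $\|\mu(f)\|_\varphi = \inf A$. If $\lambda \in B$, Lemma~\ref{DPvsKlemma} rewrites the constraint $\tau(\varphi(|f|/\lambda)) \le 1$ as $\int_0^\infty \varphi(\mu_t(f)/\lambda)\,\mathrm{d}t \le 1$, so $B \subseteq A$ and $\inf A \le \inf B$. Conversely, for $\lambda \in A$ and any $\lambda' > \lambda$, the argument of the previous paragraph (with $1/\lambda$ playing the role of $\alpha$ and $1/\lambda'$ that of $\beta$) shows $\varphi(|f|/\lambda') \in \widetilde{\M}$, while monotonicity gives $\tau(\varphi(|f|/\lambda')) \le \int_0^\infty \varphi(\mu_t(f)/\lambda)\,\mathrm{d}t \le 1$; hence $\lambda' \in B$. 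Letting $\lambda' \downarrow \lambda$ gives $\inf B \le \lambda$, and taking the infimum over $\lambda \in A$ yields $\inf B \le \inf A$, so $\inf A = \inf B$.

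I expect the genuine obstacle to be precisely the measurability issue in the converse direction, since the whole point of the statement is that $\varphi(|f|)$ may fail to lie in $\widetilde{\M}$ when $b_\varphi < \infty$. The delicate step is to recognise that replacing $\alpha$ by a strictly smaller $\beta$ (equivalently $\lambda$ by a strictly larger $\lambda'$) pushes $\sigma(\beta|f|)$ strictly below $b_\varphi$ and thereby restores membership in $\widetilde{\M}$. The left-continuity of $\varphi$ at $b_\varphi$ together with the possibility that $\varphi(b_\varphi) = \infty$ is exactly what blocks the choice $\beta = \alpha$, so the strict inequality $\beta < \alpha$ is essential and must be used with care.
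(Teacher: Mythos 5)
Your proof is correct and takes essentially the same approach as the paper: both directions reduce traces to integrals via Lemma~\ref{DPvsKlemma}, note that finiteness of $\int_0^\infty \varphi(\alpha\mu_t(|f|))\,\mathrm{d}t$ forces $\alpha\|f\|_\infty \leq b_\varphi$, and recover membership of $\varphi(\beta|f|)$ in $\widetilde{\M}$ by strictly shrinking the scaling parameter (your arbitrary $\beta < \alpha$ is the paper's $\alpha/(1+\epsilon)$), with the same two-sided inclusion argument for the norm identity. The only differences are cosmetic: you let $\lambda' \downarrow \lambda$ where the paper uses $(1+\epsilon)$-factors, and you treat the case $b_\varphi = \infty$ inline rather than by appeal to the discussion preceding the lemma.
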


\begin{proof} The validity of this result for the case $b_\varphi = \infty$, was noted in the discussion preceding the lemma. Hence let $b_\varphi < \infty$. If now there exists $\beta > 0$ so that $\varphi(\beta|f|) \in \widetilde{\M}$, then by the lemma $$\tau(\varphi(\beta|f|)) =  \int_0^\infty \varphi(\beta\mu_t(|f|))\, \mathrm{d}t.$$The ``only if'' part of the first claim therefore follows. To see the converse, suppose that $\int_0^\infty \varphi(\alpha\mu_t(|f|))\, \mathrm{d}t < \infty$ for some $\alpha > 0$. If for some $t_0 > 0$ we had $\alpha\mu_{t_0}(f) > b_\varphi$, then of course $\alpha\mu_{t}(f) \geq \alpha\mu_{t_0}(f) > b_\varphi$ for all $0 \leq t \leq t_0$, which would force $$\int_0^\infty \varphi(\alpha\mu_t(|f|))\, \mathrm{d}t \geq \int_0^{t_0} \varphi(\alpha\mu_t(|f|))\, \mathrm{d}t = \int_0^{t_0} \infty\, \mathrm{d}t = \infty.$$Thus we must have $\alpha\mu_{t}(f) \leq b_\varphi$ for all $0 < t$. Since $t \to \mu_t(f)$ is right-continuous, this means that $\alpha\|f\|_\infty = \lim_{t\to 0^+}\alpha\mu_t(f) \leq b_\varphi < \infty$. So in this case we clearly have that $f \in \M$, with $\varphi(\frac{\alpha}{1+\epsilon} f) \in \M \subset \widetilde{\M}$. On applying the lemma we conclude that 
\begin{equation}\label{eq:1}
\tau\left(\varphi\left(\frac{\alpha}{1+\epsilon} f\right)\right) = \int_0^\infty \varphi\left(\frac{\alpha}{1+\epsilon}\mu_t(|f|)\right)\, \mathrm{d}t \leq \int_0^\infty \varphi(\alpha\mu_t(|f|))\, \mathrm{d}t < \infty
\end{equation}
as required.

To see the second claim, observe that the lemma ensures that $$\{\lambda > 0 : \varphi\left(\frac{1}{\lambda}|f|\right) \in \widetilde{\M}, \tau\left(\varphi\left(\frac{1}{\lambda}|f|\right)\right) \leq 1\} \subset \{\lambda > 0 : \int_0^\infty \varphi\left(\frac{1}{\lambda}\mu_t(|f|)\right)\, \mathrm{d}t \leq 1\}.$$Hence 
\begin{eqnarray*}
\|\mu(f)\|_\varphi &=& \inf\{\lambda > 0 : \int_0^\infty \varphi\left(\frac{1}{\lambda}\mu_t(|f|)\right)\, \mathrm{d}t \leq 1\}\\
&\leq& \inf\{\lambda > 0 : \varphi\left(\frac{1}{\lambda}|f|\right) \in \widetilde{\M}, \tau\left(\varphi\left(\frac{1}{\lambda}|f|\right)\right) \leq 1\}.
\end{eqnarray*}
To see that equality holds, let $\epsilon > 0$ be given, and select $\lambda_0 > 0$ so that $$\|\mu(f)\|_\varphi \leq \lambda_0 \leq (1+\epsilon)\|\mu(f)\|_\varphi \quad\mbox{and}\quad \int_0^\infty \varphi\left(\frac{1}{\lambda_0}\mu_t(|f|)\right)\, \mathrm{d}t \leq 1.$$But then by formula \ref{eq:1}, we have that $\varphi(\frac{1}{(1+\epsilon)\lambda_0} f) \in \widetilde{\M}$, with
$$\tau\left(\varphi\left(\frac{1}{(1+\epsilon)\lambda_0} f\right)\right) \leq \int_0^\infty \varphi\left(\frac{1}{\lambda_0}\mu_t(|f|)\right)\, \mathrm{d}t \leq 1.$$
So 
$$\inf\{\lambda > 0 : \varphi\left(\frac{1}{\lambda}|f|\right) \in \widetilde{\M}, \tau\left(\varphi\left(\frac{1}{\lambda}|f|\right)\right) < 1\} \leq (1+\epsilon)\lambda_0 \leq (1+\epsilon)^2\|\mu(f)\|_\varphi.$$
Since $\epsilon > 0$ was arbitrary, we have $$\inf\{\lambda > 0 : \varphi\left(\frac{1}{\lambda}|f|\right) \in \widetilde{\M}, \tau\left(\varphi\left(\frac{1}{\lambda}|f|\right)\right) \leq 1\} \leq  \|\mu(f)\|_\varphi$$as required.
\end{proof}

We close this section by formulating one more fact regarding Orlicz spaces that will prove to be useful later 
on. (This is a special case of known results in \cite{DDdP3}).

\begin{proposition}\label{kothe}
Let $\varphi$ be an Orlicz function and $\varphi^*$ its complementary function. Then $L^{\varphi^*}(\widetilde{\M})$ equipped with the norm $\|\cdot\|^0_\varphi$ defined by $$\|f\|^0_\varphi = \sup \{\tau(|fg|) : g \in L^{\varphi}(\widetilde{\M}), \|g\|_\varphi \leq 1\} \qquad f \in L^{\varphi^*}(\widetilde{\M})$$is the K\"{o}the dual of $L^{\varphi^*}(\widetilde{\M})$. That is $$L^{\varphi^*}(\widetilde{\M}) = \{f \in \widetilde{\M} : fg \in L^1(\M, \tau) \text{ for all } g \in L^{\varphi}(\widetilde{\M})\}.$$Consequently
$$|\tau(fg)| \leq \|f\|^0_\varphi.\|g\|_\varphi \quad\mbox{for all}\quad f \in L^{\varphi^*}(\widetilde{\M}), g \in L^{\varphi}(\widetilde{\M}).$$
\end{proposition}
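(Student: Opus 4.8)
The plan is to obtain all three assertions as the Orlicz-space instance of the general Köthe-duality theory for noncommutative symmetric spaces developed in \cite{DDdP3}, the only genuinely analytic input being the classical duality of commutative Orlicz spaces. (The Köthe dual in question is that of $L^\varphi(\widetilde{\M})$, as the displayed set identity makes clear.) The starting point is the commutative fact that $L^\varphi(0,\infty)$ and $L^{\varphi^*}(0,\infty)$ form a Köthe-dual pair: the Köthe dual of $L^\varphi(0,\infty)$ equipped with the Luxemburg norm $\|\cdot\|_\varphi$ is exactly $L^{\varphi^*}(0,\infty)$ equipped with the Orlicz norm $\|\cdot\|^0_{\varphi^*}$, and the associated Hölder inequality $\int_0^\infty |hk|\,\mathrm{d}t \leq \|h\|^0_{\varphi^*}\,\|k\|_\varphi$ holds (see \cite[Ch.~4]{BS}, and note the dual-norm pairing already recorded in the preliminaries). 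Since it was also recorded above that $L^\varphi(0,\infty)$ is a rearrangement-invariant Banach function space whose norm has the Fatou property \cite[Theorem 4.8.9]{BS}, the structural hypotheses under which \cite{DDdP3} computes Köthe duals of operator spaces are met.

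With $E = L^\varphi(0,\infty)$ and $E^\times = L^{\varphi^*}(0,\infty)$, the transfer theorem of \cite{DDdP3} asserts that the Köthe dual of $E(\widetilde{\M}) = L^\varphi(\widetilde{\M})$ is $E^\times(\widetilde{\M}) = L^{\varphi^*}(\widetilde{\M})$, and that for $f \in L^{\varphi^*}(\widetilde{\M})$ the trace pairing $\|f\|^0_\varphi = \sup\{\tau(|fg|) : g \in L^\varphi(\widetilde{\M}),\ \|g\|_\varphi \leq 1\}$ coincides with $\|\mu(f)\|^0_{\varphi^*}$. The engine behind this identification is the Fack--Kosaki submajorization inequality $\int_0^\alpha \mu_t(fg)\,\mathrm{d}t \leq \int_0^\alpha \mu_t(f)\,\mu_t(g)\,\mathrm{d}t$ of \cite{FK}, together with a reduction to a maximal abelian subalgebra containing $|f|$, on which the trace pairing collapses to the commutative integral pairing and the supremum can be computed against singular-value functions. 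This is the same passage to an abelian subalgebra that underlies Lemma \ref{DPvsKlemma}, and the care taken there (and in Proposition \ref{DPvsK}) over the possibility $b_\varphi < \infty$ is precisely what lets $\tau(|fg|)$ be handled uniformly.

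It then remains to assemble the three displayed claims. For the set identity, the inclusion of $L^{\varphi^*}(\widetilde{\M})$ in $\{f : fg \in L^1(\M,\tau)\ \text{for all}\ g \in L^\varphi(\widetilde{\M})\}$ follows from Hölder: for such $f,g$ one has $\tau(|fg|) = \int_0^\infty \mu_t(fg)\,\mathrm{d}t \leq \int_0^\infty \mu_t(f)\,\mu_t(g)\,\mathrm{d}t \leq \|\mu(f)\|^0_{\varphi^*}\,\|\mu(g)\|_\varphi < \infty$, so $fg \in L^1(\M,\tau)$. Conversely, if $f \in \widetilde{\M}$ satisfies $fg \in L^1(\M,\tau)$ for every $g \in L^\varphi(\widetilde{\M})$, then testing against elements whose singular-value functions exhaust the unit ball of $L^\varphi(0,\infty)$ forces $\mu(f) \in L^{\varphi^*}(0,\infty)$ by the commutative Köthe duality, i.e.\ $f \in L^{\varphi^*}(\widetilde{\M})$; this is exactly where the norm identification of the previous paragraph is used. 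That $\|\cdot\|^0_\varphi$ is a norm realising the Köthe dual is then the content of \cite{DDdP3}, and the final estimate $|\tau(fg)| \leq \|f\|^0_\varphi\,\|g\|_\varphi$ is immediate from $|\tau(fg)| \leq \tau(|fg|)$ and the definition of $\|f\|^0_\varphi$ as a supremum, homogeneity in $g$ disposing of the normalisation $\|g\|_\varphi \leq 1$.

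The step I expect to be the real obstacle is the middle one: proving that the supremum defining the trace pairing actually equals the commutative Orlicz norm $\|\mu(f)\|^0_{\varphi^*}$, i.e.\ that the noncommutative test operators $g$ can be chosen so that $\mu(g)$ realises (or approximates) the extremal functions of the commutative duality while keeping $\tau(|fg|)$ near $\int_0^\infty \mu_t(f)\,\mu_t(g)\,\mathrm{d}t$. Attaining near-equality in the Fack--Kosaki inequality requires producing a $g$ whose singular values are \emph{aligned} with those of $f$ --- concretely, building $g$ inside the abelian algebra generated by $|f|$ via a spectral construction --- and this alignment argument, rather than the formal duality bookkeeping, is where the substance of \cite{DDdP3} lies. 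Everything else (the easy inclusion and the concluding Hölder inequality) is routine once this identification is in hand.
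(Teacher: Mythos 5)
Your proposal is correct and follows essentially the same route as the paper: both reduce the statement to the commutative K\"{o}the duality of Orlicz spaces (Bennett--Sharpley, Ch.~4) combined with the noncommutative transfer theorem, Theorem 5.6 of \cite{DDdP3}, with the F\'{a}ck--Kosaki submajorization machinery correctly attributed to that reference rather than reproved. The only (cosmetic) difference is in how the hypotheses of \cite{DDdP3} are verified: you invoke rearrangement-invariance plus the Fatou property from \cite[Theorem 4.8.9]{BS}, while the paper establishes that $L^{\varphi}(\widetilde{\M})$ is fully symmetric and hence properly symmetric via \cite[Corollary 2.6]{DDdP2}, which is the precise form of the hypothesis that \cite[Theorem 5.6]{DDdP3} requires.
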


\begin{proof}
It is clear from the discussion following Corollary 2.7 of \cite{DDdP2} that $L^{\varphi}(\widetilde{\M})$ is fully symmetric in the sense defined there. But by \cite[Corollary 2.6]{DDdP2}, $L^{\varphi}(\widetilde{\M})$ is then properly symmetric in the sense defined on p 737 of \cite{DDdP3}. The claims therefore follow from \cite[Corollary 4.8.15]{BS} and \cite[Theorem 5.6]{DDdP3}.
\end{proof}

\section{Noncommutative regular statistical models}

We begin with the definition of the classical regular model (cf \cite{PS}). Let $\{\Omega, \Sigma, \nu\}$
be a measure space; $\nu$ will be called the reference measure. The set of  densities of all the probability measures equivalent to $\nu$ will be called the state space $\cS_{\nu}$, i.e.
\begin{equation}
{\cS}_{\nu} = \{ f  \in L^1(\nu): f>0 \quad \nu-a.s.,\, E_1(f) =1 \},
\end{equation}
where, in general, $E_g(f) \equiv \int f \cdot g d\nu$.
\begin{definition}
\label{clasmodel}
The classical statistical model consists of the measure space $\{\Omega, \Sigma, \nu\}$, state space $\cS_{\nu}$, and the set of measurable functions $L^0(\Omega, \Sigma, \nu)$.
\end{definition}
As a next step, we wish to select regular random variables, i.e.  random variables having all finite moments. To this end
we define the moment generating functions as follows: fix $f \in {\cS}_{\nu}$, take a real random variable $u$ on $(\Omega, \Sigma, f d\nu)$
and define:
\begin{equation}
\hat{u}_f(t) = \int exp(tu) f d\nu, \qquad t \in \bR. 
\end{equation}

 In the sequel we will need the following properties of $\hat{u}$ ({\it for details see Widder, \cite{Wid}}):
\begin{enumerate}
\item $\hat{u}$ is analytic in the interior of its domain,
\item its derivatives are obtained by differentiating under the integral sign.
\end{enumerate}
Now the following definition is clear:
\begin{definition}
The set of all random variables such that
\begin{enumerate}
\item{} $\hat{u}_f$ is well defined in a neighborhood of the origin $0$,
\item{} the expectation of $u$ is zero,
\end{enumerate}
will be denoted by $L_f$ and called the regular random variables.
\end{definition}

We emphasize that {\it all the moments of every $u \in L_f$ exist and they are the values at $0$ of the derivatives of $\hat{u}_f$.}
In other words we have selected all random variables such that for each $u$, any moment $E_f(u^n)$ is well defined. The set of  regular random variables having zero expectation is characterized by:

\begin{theorem}(Pistone-Sempi, \cite{PS})
$L_f$ is the closed subspace of the Orlicz space $L^{\cosh - 1}(f\cdot \nu)$ of zero expectation random
variables.
\end{theorem}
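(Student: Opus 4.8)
The plan is to identify $L_f$ explicitly as the set $\{u \in L^{\cosh-1}(f\cdot\nu) : E_f(u) = 0\}$ and then to verify that this set is a closed linear subspace. The argument divides into two essentially independent parts: matching the requirement that $\hat{u}_f$ be well-defined near the origin with membership in the Orlicz space $L^{\cosh-1}(f\cdot\nu)$, and showing that the vanishing-expectation constraint cuts out a closed subspace.

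For the first part I would exploit the elementary splitting $\cosh(\lambda u) - 1 = \frac{1}{2}(e^{\lambda u} + e^{-\lambda u}) - 1$. Integrating against $f\,d\nu$ shows that $\int(\cosh(\lambda u) - 1)\,f\,d\nu < \infty$ holds precisely when both $\hat{u}_f(\lambda) = \int e^{\lambda u}\,f\,d\nu$ and $\hat{u}_f(-\lambda) = \int e^{-\lambda u}\,f\,d\nu$ are finite. Hence the Orlicz membership condition---finiteness of the integral for some $\lambda > 0$---is exactly the statement that $\hat{u}_f$ is finite at the two symmetric points $\pm\lambda$. Since for each fixed $\omega$ the map $t \mapsto e^{tu(\omega)}$ is convex, the generating function $t \mapsto \hat{u}_f(t)$ is convex, so finiteness at $\pm\lambda$ forces finiteness on the entire interval $[-\lambda, \lambda]$, which is a neighborhood of $0$. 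Conversely, if $\hat{u}_f$ is finite on some $(-\delta, \delta)$, then for any $0 < \lambda < \delta$ both exponential integrals are finite and hence $u \in L^{\cosh-1}(f\cdot\nu)$. This shows that the first defining condition of $L_f$ is nothing other than membership in this Orlicz space.

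For the second part I would observe that $(\Omega, \Sigma, f\cdot\nu)$ is a probability space, since $f \in \cS_\nu$ yields $\int f\,d\nu = E_1(f) = 1$. On a finite measure space the constant function $1$ lies in $L^\infty$ and hence in the complementary Orlicz space $L^{(\cosh-1)^*}(f\cdot\nu)$, so by the H\"older inequality for Orlicz spaces (the commutative case of Proposition \ref{kothe}) the expectation functional $u \mapsto E_f(u) = \int u\,f\,d\nu$ is bounded on $L^{\cosh-1}(f\cdot\nu)$. Being the kernel of a continuous linear functional, the set of zero-expectation elements is a closed linear subspace; combining this with the identification from the first part yields the theorem.

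The routine computations---the convexity of $\hat{u}_f$ and the splitting of $\cosh$---are not where the difficulty lies. The crucial step is the passage from finiteness of $\hat{u}_f$ at the isolated symmetric points $\pm\lambda$ to finiteness on the whole interval, and this is exactly where the two-sided symmetry built into the Young's function $\cosh - 1$ does the work: it simultaneously controls both tails $e^{\lambda u}$ and $e^{-\lambda u}$, whereas a one-sided exponential integrability condition would not by itself place $u$ in a symmetric Orlicz space.
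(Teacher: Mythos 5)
Your proposal is correct, but a point of comparison first: the paper itself gives no proof of this statement --- it is quoted from Pistone--Sempi \cite{PS} --- so the natural benchmark is the paper's proof of its noncommutative analogue, Theorem \ref{QPS}. Your first part is essentially that argument transplanted back to the classical setting: the paper likewise splits $\cosh - 1$ into the two exponentials, discards the constant $-1$ using finiteness of the underlying measure, and invokes convexity of $t \mapsto e^{t\mu(g)}$ to pass from finiteness at a symmetric pair of points $\pm 1/a$ to finiteness on the whole interval between them; your convexity step for $t \mapsto \hat{u}_f(t)$ is the same device. Where you go beyond the paper is the second part. The proof of Theorem \ref{QPS} stops at the identification of the two sets and never addresses the expectation constraint or the closedness of the resulting subspace, even though both appear in the statement. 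You close this gap correctly: since $f \cdot \nu$ is a probability measure, the constant function $1$ lies in the complementary Orlicz space $L^{(\cosh-1)^*}(f\cdot\nu)$ (note that $(\cosh - 1)^*$ is finite-valued, so $\int (\cosh-1)^*(\lambda \cdot 1)\, f\, d\nu = (\cosh-1)^*(\lambda) < \infty$), whence $u \mapsto E_f(u)$ is bounded by Orlicz--H\"{o}lder (the commutative case of Proposition \ref{kothe}) and its kernel is closed. So your route agrees with the paper's where the paper has an argument, and supplies the missing closedness argument where it does not; the only caveat is that your closing remark slightly overstates the difficulty --- the convexity step, while indeed the crux, is itself routine once the $\cosh$ splitting is written down.
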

Before proceeding with the noncommutative generalization of regular random variables, we want to make two remarks. Firstly, 
 the above result says that {\it classical Orlicz spaces are well motivated} in the context of probability calculus. Secondly, we kept the condition ``the expectation of $u$ is zero'' only to follow the original Pistone-Sempi argument.
 
Now, we turn to the noncommutative counterpart of the presented scheme. Let
 $({\M}, \tau)$ be a pair consisting of a semifinite von Neumann algebra and fns trace.
 Define (see \cite{Tak}, vol. I):
\begin{enumerate}
\item $n_{\tau} = \{ x \in {\M}: \tau(x^*x) < + \infty \}.$
\item ({\it definition ideal of the trace} $\tau$) $m_{\tau} = \{xy: x,y \in n_{\tau} \}.$
\item $\omega_x(y) = \tau(xy), \quad x\geq 0.$
\end{enumerate}

One has (for details see Takesaki, \cite{Tak}, vol. I)
\begin{enumerate}
\item if $x \in m_{\tau}$, and $x\geq 0$, then $\omega_x \in \M_*^+$. 
\item If $L^1(\M,\tau)$ stands for the completion of $(m_{\tau}, ||\cdot||_1)$ then $L^1(\M, \tau)$ is isometrically isomorphic to $\M_*$.
\item $\M_{*,0} \equiv \{ \omega_x : x \in m_{\tau} \}$ is norm dense in $\M_*$.
\end{enumerate}
Finally, denote by $\M_*^{+,1}$ ($\M_{*,0}^{+,1}$) the set of all normalized normal positive functionals in $\M_*$ (in $\M_{*,0}$ respectively).
Now, performing a ``quantization'' of Definition \ref{clasmodel} we arrive at
\begin{definition}
\label{qsm}
The noncommutative statistical model consists of a quantum measure space $(\M, \tau)$, ``quantum densities with 
respect to $\tau$'' in the form of $\M_{*,0}^{+,1}$, and
the set of $\tau$-measurable operators $\tM$.
\end{definition}

In the framework of the noncommutative statistical model the regular (noncommutative) random variables can be defined in the following way: 
\begin{definition}
\label{kwant}
\begin{equation}
L^{quant}_x = \{ g \in \tM: \quad 0 \in D(\widehat{\mu_{x}^g(t)})^0, \quad x \in m_{\tau}^+ \},
\end{equation}
where $D(\cdot)^0$ stands for the interior of the domain $D(\cdot)$ and
\begin{equation} 
\widehat{\mu_{x}^g(t)} = \int \exp(t\mu_s(g)) \mu_s(x)ds, \qquad t\in\bR.
\end{equation}
(Notice that the requirement that $0 \in D(\widehat{\mu_{x}^g(t)})^0$, presupposes that the transform $\widehat{\mu_{x}^g(t)}$ is well-defined in a neighbourhood of the origin.)
\end{definition}

We remind that above and in the sequel $\mu(g)$ ($\mu(x)$)  stands for the function $[0,\infty) \ni t \mapsto \mu_t(g) \in [0, \infty]$ 
($[0,\infty) \ni t \mapsto \mu_t(x) \in [0, \infty]$ respectively).

To comment on Definition \ref{kwant}, we should firstly clarify the role of $\mu(x)$. To this end we note that for $y \in \M$, $y\geq 0$, and $x \in m_{\tau},  x\geq 0$, one has
\begin{equation}
\label{la}
\omega_x(y) \equiv \tau(xy) = \int_0^{\infty}\mu_t(xy)dt \leq \int_0^{\infty} \mu_{\frac{t}{2}}(x)\mu_{\frac{t}{2}}(y)dt
\end{equation}
 and
 \begin{equation}
 \label{lala}
\omega_x(y^n) \equiv \tau(xy^n) \leq \int_0^{\infty}\mu_{\frac{t}{2}}(y^n) 
\mu_{\frac{t}{2}}(x)dt
\leq \int_0^{\infty} \mu_{\frac{t}{2n}}(y)^n \mu_{\frac{t}{2}}(x)dt.
\end{equation}
where we have used the Fack-Kosaki results \cite{FK} (Lemma 2.5) on generalized singular values.

Moreover, $[0, \infty) \ni t \mapsto \mu_t(x)$ is a positive function such that $\mu_t(x)\longrightarrow||x||$ as 
${t\downarrow 0}$, and for $x \in m_{\tau}^+$ corresponding to a state, one has
 \begin{equation}
\int_0^{\infty}\mu_t(x)dt = \tau(x) \equiv \tau(x {\I}) \equiv \omega_x({\I}) = 1.
\end{equation}
Therefore, the function $[0, \infty) \ni t \mapsto \mu_t(x) \in [0, \infty)$ plays the role of a density of a probability measure.

Secondly, let us turn to the role of the Laplace transform.
 It is an easy observation that the properties of the Laplace transform offers the existence of
$\int \mu_t(y)^n \mu_t(x)dt$ for any $n \in \bN$. Further, Fack-Kosaki results \cite{FK},  Lemma 2.5, lead to
\begin{eqnarray*}
\omega_x(y^n) &=& \tau(x y^n)\leq \tau(|xy^n|)\\
&=& \int_0^{\infty} \mu_t(x y^n)dt\\
&\leq& \int_0^{\infty} \mu_{t/2}(y^n)\mu_{t/2}(x)dt\\
&\leq& \int_0^{\infty}\mu_{t/2n}(y)^n \mu_{t/2}(x)dt \\
&=& 2n \int_0^{\infty} \mu_s(y)^n \mu_{ns}(x) ds\\
&\leq& 2n \int_0^{\infty} \mu_s(y)^n \mu_s(x) ds \\
&<& \infty.
\end{eqnarray*} 
But this gives the existence of moments of a noncommutative random variable $y$. 

\begin{definition}
Let $x \in L_+^1(\M, \tau)$ and let $\rho$ be a Banach function norm on $L^0((0, \infty), \mu_t(x)dt)$. In the spirit of \cite{DDdP} we then formally define the weighted noncommutative Banach function space $L^{\rho}_{x}(\tM)$ to be the collection of all $f \in \tM$ for which $\mu(f)$ belongs to $L^{\rho}((0, \infty), \mu_t(x)dt)$. For any such $f$ we write $\|f\|_\rho = \rho(\mu(f))$.
\end{definition}

\begin{theorem}\label{ncbf}
Let $x \in L_+^1(\M, \tau)$. If $\rho$ is a rearrangement-invariant Banach function norm on $L^0((0, \infty), \mu_t(x)dt)$ which satisfies the Fatou property, then $L^{\rho}_{x}(\tM)$ is a linear space and $\|\cdot\|_\rho$ a norm. Equipped with the norm $\|\cdot\|_\rho$, $L^{\rho}_{x}(\tM)$ is a Banach space which injects continuously into $\tM$.
\end{theorem}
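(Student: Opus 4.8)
The plan is to reduce the entire statement to the already-established machinery of Dodds--Dodds--de Pagter by recognizing that a weighted space $L^\rho_x(\tM)$ is nothing but an ordinary noncommutative Banach function space in disguise. The key observation is that the measure $\mu_t(x)\,\mathrm{d}t$ on $(0,\infty)$ is, up to the harmless fact that it has finite total mass $\tau(x)$, a $\sigma$-finite measure, and that $t \mapsto \mu_t(x)$ is a nonincreasing right-continuous function. First I would invoke the standard result that $L^\rho((0,\infty),\mu_t(x)\,\mathrm{d}t)$ is itself a Banach function space whenever $\rho$ is a Banach function norm with the Fatou property; this is exactly the classical Luxemburg--Zaanen theory, and the Fatou property is precisely what guarantees completeness of $L^\rho$ (via \cite[11.4]{AB} as cited in the preliminaries, which shows Fatou is equivalent to the monotone continuity of $\rho$ on increasing sequences).

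Next I would establish that $L^\rho_x(\tM)$ is a linear space and $\|\cdot\|_\rho$ a norm. Linearity and the triangle inequality are the only nonobvious points, since scaling and nonnegativity are immediate from the corresponding properties of $\rho$ applied to $\mu(f)$. For the triangle inequality one cannot simply add $\mu(f)$ and $\mu(g)$ pointwise, because $\mu$ is not additive; instead I would use the subadditivity of the generalized singular value in the integrated (Hardy--Littlewood) sense, namely $\int_0^\alpha \mu_t(f+g)\,\mathrm{d}t \leq \int_0^\alpha \mu_t(f)\,\mathrm{d}t + \int_0^\alpha \mu_t(g)\,\mathrm{d}t$ for all $\alpha>0$ (Fack--Kosaki \cite{FK}). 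The rearrangement-invariance of $\rho$ is exactly the hypothesis that converts this domination of partial integrals into the norm inequality $\rho(\mu(f+g)) \leq \rho(\mu(f)) + \rho(\mu(g))$, since a rearrangement-invariant Banach function norm is monotone with respect to the Hardy--Littlewood preorder. This is the step I expect to be the genuine crux: one needs that rearrangement-invariance plus the Fatou property upgrade $\rho$ to a \emph{fully symmetric} norm, so that Hardy--Littlewood domination implies norm domination; this is the Calder\'on--Mityagin type theorem, available in \cite{BS} and in the framework of \cite{DDdP}.

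Having secured that $\|\cdot\|_\rho$ is a norm with $\mu(f+g)$ controlled as above, I would then transfer completeness from $L^\rho((0,\infty),\mu_t(x)\,\mathrm{d}t)$ to $L^\rho_x(\tM)$. The cleanest route is to observe that the weighted construction is formally identical to the Dodds--Dodds--de Pagter definition of $L^\rho(\tM)$, merely with Lebesgue measure on $(0,\infty)$ replaced by $\mu_t(x)\,\mathrm{d}t$; one then quotes that their completeness proof uses only the Fatou property of $\rho$ and the completeness of $\tM$ in the measure topology, both of which persist verbatim here. For the continuous injection into $\tM$, I would argue that convergence in $\|\cdot\|_\rho$ forces convergence of the singular-value functions in $\rho$-norm, hence in measure on any set of finite weight, and since the weight $\mu_t(x)$ is strictly positive on an initial interval (as $\mu_t(x)\to\|x\|>0$ as $t\downarrow 0$), this yields local control of $\mu_t(f)$ near $0$, which is exactly what dominates the measure topology on $\tM$. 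Concretely, one exhibits a constant $C>0$ with $\mu_{t}(f)\cdot\mathbf{1}_{[0,\delta]} $ controlled by $\|f\|_\rho$ for each fixed $\delta$, giving the boundedness of the embedding. The main obstacle throughout remains the passage from integrated singular-value domination to the norm inequality, so I would front-load the verification that the hypotheses force $\rho$ to respect the Hardy--Littlewood order.
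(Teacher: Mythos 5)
Your high-level strategy (Fack--Kosaki submajorization plus a Calder\'on--Mityagin type theorem, then transfer of the completeness argument from \cite{DDdP}) is the right one, and you correctly flag the submajorization-to-norm step as the crux; but your execution of that very step has a genuine gap: you treat the weight as harmless, when in fact \emph{two different measures are in play} and reconciling them is most of the work. The singular value functions $\mu(f)$, $\mu(g)$, $\mu(f+g)$ are decreasing rearrangements with respect to Lebesgue measure (they come from the trace $\tau$), and the Fack--Kosaki inequality $\int_0^\alpha \mu_t(f+g)\,\mathrm{d}t \leq \int_0^\alpha \mu_t(f)\,\mathrm{d}t + \int_0^\alpha \mu_t(g)\,\mathrm{d}t$ is likewise a statement about Lebesgue partial integrals. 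The norm $\rho$, however, is rearrangement-invariant with respect to the weighted measure $\nu_x = \mu_t(x)\,\mathrm{d}t$, so the theorem you want to invoke (\cite[Theorem 2.4.6]{BS}, valid over a \emph{resonant} measure space) converts into norm inequalities only submajorization formulated in terms of $\nu_x$-rearrangements and $\nu_x$-partial integrals. Lebesgue submajorization does not formally imply $\nu_x$-submajorization, and bridging the two is precisely what the paper's proof does: it applies Hardy's Lemma \cite[Proposition 2.3.6]{BS} with the decreasing weight $\mu(x)\chi_{[0,t]}$ to pass from inequalities between $\int_0^t \widetilde{\mu}_s(\cdot,\lambda)\,\mathrm{d}s$ to inequalities between $\int_0^t \widetilde{\mu}_s(\cdot,\lambda)\mu_s(x)\,\mathrm{d}s$; it shows that for decreasing right-continuous $h$ one has $h(t) = \widetilde{\mu}_{F_x(t)}(h,\nu_x)$, where $F_x(t) = \int_0^t\mu_s(x)\,\mathrm{d}s$ is a homeomorphism of $[0,t_x)$ onto $[0,\tau(x))$; it then uses the change of variables formula to turn the weighted Lebesgue inequalities into genuine $\nu_x$-rearrangement submajorization; and it verifies that $((0,\infty),\nu_x)$ is resonant by proving $\nu_x$ is non-atomic and citing \cite[Theorem 2.2.7]{BS}. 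None of these steps appears in your proposal, and without them your sentence ``the rearrangement-invariance of $\rho$ is exactly the hypothesis that converts this domination of partial integrals into the norm inequality'' is false as stated.

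A second, smaller omission: definiteness of $\|\cdot\|_\rho$ is not ``immediate from the corresponding properties of $\rho$''. From $\rho(\mu(a)) = 0$ you only get $\mu(a) = 0$ $\nu_x$-almost everywhere, and since $\nu_x$ gives no mass to $[t_x,\infty)$ you must use the mutual absolute continuity of $\nu_x$ and Lebesgue measure on $[0,t_x)$ together with the right-continuity of $t \mapsto \mu_t(a)$ to conclude $\|a\| = \lim_{t\downarrow 0}\mu_t(a) = 0$, hence $a = 0$. (Incidentally, the paper works with the difference estimate for $\widetilde{\mu}_s(\mu(a+b)-\mu(b),\lambda)$ from \cite[Theorem 3.4]{DDdP} rather than your $\mu(f+g) \prec\prec \mu(f)+\mu(g)$; that choice is largely cosmetic, but the weight-transfer machinery above is not optional.)
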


\begin{proof}
We will not give a detailed proof, but only indicate how the argument in Section 4 of \cite{DDdP} may be adapted to the present context. For the sake of convenience, we will assume that $\tau(x) = 1$.

Since $t \to \mu_t(x)$ is decreasing, right-continuous on $[0, \infty)$, and finite-valued on $(0, \infty)$, it is actually Riemann-integrable on any bounded sub-interval of $(0, \infty)$, and zero-valued on $[t_x, \infty)$ where $t_x = \inf\{t > 0 : \mu_t(x) = 0\}$. These facts enable us to conclude that the function $$F_x(t) = \int_0^t \mu_s(x)ds \qquad t \geq 0$$is continuous and strictly increasing on $[0, t_x)$, and constant on $[t_x, \infty)$. So $F_x$ is actually a homeomorphism from $[0, t_x)$ onto $[0, 1)$. For any measurable function $g:[0, \infty) \to \mathbb{R}$ and any $t > 0$ we therefore have $$\int_0^{F_x(t)}g(s)ds = \int_0^t g(F_x(s))\mu_s(x)ds$$by the change of variables formula (see for example p 155 of \cite{Tay}).

For ease of notation we write $\nu_x$ for the Borel measure $$\nu_x(E) = \int\chi_E(t)\mu_t(x)dt.$$ Since $\mu(x)$ is non-zero on $[0, t_x)$, it is a simple matter to conclude that on $[0, t_x)$, $\nu_x \ll \lambda$ and $\lambda \ll \nu_x$ (here $\lambda$ denotes Lebesgue measure). Since $\nu_x$ is a finite measure, it is in fact $\epsilon-\delta$ absolutely continuous with respect to $\lambda$. Using these facts, one is now able to show that $\nu_x$ is non-atomic. (If $\nu_x(E) \neq 0$ then so too $\lambda(E) \neq 0$. Now set $\epsilon = \frac{1}{2}\nu_x(E)$, and select $F \subset E \cap [0, t_x)$ with $0 < \lambda(F) \leq \delta(\epsilon)$ to see that $\nu_x$ is non-atomic.) Thus by \cite[Theorem 2.2.7]{BS}, $\nu_x$ is a resonant measure.

In view of the fact that $\mu(x)$ is decreasing, we have that $F_x(t) = \nu_x([0, t]) \geq \nu_x([s, s+t])$ for any $s,t > 0$. More 
generally by approximating with intervals, one can show that for any $t > 0$ and any Borel set $E$ in $[0, \infty)$ 
with $\lambda(E) = t$, we have that $\nu_x(E) \leq \nu_x([0, t]) = F_x(t)$. Given some measurable function $f$ on $[0, 
\infty)$, these facts ensure that $$\inf\{\|f\chi_E\|_\infty : \lambda(E^c) \leq t\} \geq \inf\{\|f\chi_E\|_\infty : 
\nu_x(E^c) \leq F_x(t)\}.$$In other words $$\widetilde{\mu}_t(f, \lambda) \geq \widetilde{\mu}_{F_x(t)}(f, \nu_x).$$(The centered expressions above respectively denote the decreasing rearrangement of $f$ computed using $\lambda$ and $\nu_x$.)

Now if $h$ is decreasing and right-continuous on $[0, \infty)$, and finite valued on $(0, \infty)$, then more can be said. It is an exercise to see that in this case $\inf\{\|h\chi_E\|_\infty : \lambda(E^c) \leq t\} = \|h\chi_{(t, \infty)}\|_\infty$. (To see that ``$\leq$'' holds is trivial. For the converse note that if $\lambda(E \cap [0, t]) \neq 0$, then $\|h\chi_E\|_\infty \geq \|h\chi_{(t, \infty)}\|_\infty$ by the fact that $h$ is decreasing.) The right-continuity of $h$ combined with the fact that it is decreasing, ensures that $\|h\chi_{(t, \infty)}\|_\infty = h(t)$. A similar argument to the above shows that for any $0 < t < t_x$, we have that $\inf\{\|h\chi_E\|_\infty : \nu_x(E^c) \leq F_x(t)\} = \|h\chi_{(t, \infty)}\|_\infty$. For functions such as these, we therefore have $$h(t) = \widetilde{\mu}_t(h, \lambda) = \widetilde{\mu}_{F_x(t)}(h, \nu_x) \qquad \mbox{for all} \qquad 0 < t < t_x.$$

Finally let $a, b \in L_x^\rho(\tM)$ be given. We first show that then $a+b \in L_x^\rho(\tM)$, and hence 
that $L_x^\rho(\tM)$ is linear, before going on to conclude that $\|\cdot\|_\rho$ is a norm. By Theorem 3.4 of 
\cite{DDdP} we have that $$\int_0^t \widetilde{\mu}_s(\mu(a+b)-\mu(b), \lambda)ds \leq \int_0^t \widetilde{\mu}_s(\mu(a), \lambda)ds = \int_0^t \mu_t(a)ds$$for any $t > 0$. If now we apply Hardy's Lemma \cite[Proposition 2.3.6]{BS} to the decreasing function $\mu(x)\chi_{[0, t]}$, we may conclude that $$\int_0^t \widetilde{\mu}_s(\mu(a+b)-\mu(b), \lambda)\mu_s(x)ds \leq \int_0^t \mu_s(a)\mu_s(x)ds$$for any $t > 0$. Next use the facts that $\mu_s(a) = \widetilde{\mu}_{F_x(s)}(\mu(a), \nu_x)$ for all $0 < s < t_x$, and $\widetilde{\mu}_s(\mu(a+b)-\mu(b), \lambda) \geq \widetilde{\mu}_{F_x(s)}(\mu(a+b)-\mu(b), \nu_x)$, to get 
$$\int_0^t \widetilde{\mu}_{F_x(s)}(\mu(a+b)-\mu(b), \nu_x)\mu_s(x)ds \leq \int_0^t \widetilde{\mu}_{F_x(s)}(\mu(a), \nu_x)\mu_s(x)ds$$for any $t_x \geq t > 0$. 
Since $F_x$ is a homeomorphism from $[0,t_x)$ to $[0, 1)$, the change of variables formula now ensures that $$\int_0^r \widetilde{\mu}_{s}(\mu(a+b)-\mu(b), \nu_x)ds \leq \int_0^r \widetilde{\mu}_{s}(\mu(a), \nu_x)ds$$for any $1 > r > 0$. (Simply let $F_x(t) = r$.) Since $\nu_x$ is a probability measure, we in fact have that $\widetilde{\mu}_{s}(\mu(a+b)-\mu(b), \nu_x) = \widetilde{\mu}_{s}(\mu(a), \nu_x) = 0$ for all $s \geq 1$. Hence the 
previous centered inequality actually holds for all $r > 0$. We may now finally apply \cite[Theorem 2.4.6]{BS} to 
conclude that $\rho(\mu(a+b)-\mu(b)) \leq \rho(\mu(a))$. But since $\mu(a), \mu(b) \in L^{\rho}((0, \infty), \mu_t(x)dt)$, this inequality surely forces $\mu(a+b) - \mu(b) \in L^{\rho}((0, \infty), \mu_t(x)dt)$, and hence $\mu(a+b) \in L^{\rho}((0, \infty), \mu_t(x)dt)$. Thus by definition $a+b \in L^{\rho}_x(\tM)$, ensuring that $L^{\rho}_x(\tM)$ is linear. (The fact that $\alpha a \in L^{\rho}_x(\tM)$ whenever $a \in L^{\rho}_x(\tM)$ is easy to verify.) But then 
this same inequality also ensures that $\|a+b\|_\rho \leq \|a\|_\rho + \|b\|_\rho$, and hence that $\|\cdot\|_\rho$ is a semi-norm on $L_x^\rho(\tM)$. Now observe that if $\|a\|_\rho = \rho(\mu(a)) = 0$, then $\mu(a) = 0$ $\nu_x$-ae. But since we have that $\lambda \ll \nu_x$ on $[0, t_x)$, this fact forces $\mu_t(a) = 0$ for $\lambda$-almost every $t$ in $[0, t_x)$. The right-continuity of $t \to \mu_t(a)$ then ensures that $\|a\| = \lim_{t \downarrow 0}\mu_t(a) = 0$, and hence that $a = 0$. Thus $\|\cdot\|_\rho$ is in fact a norm. 

The rest of the proof runs along similar lines as the argument in Section 4 of \cite{DDdP}.
\end{proof}

Finally we wish to prove a non-commutative version of the Pistone-Sempi theorem.
 
\begin{theorem}\label{QPS}
The set $L^{quant}_x$  coincides with the closed subspace of the weighted Orlicz space $L_x^{\cosh - 1}(\tM) \equiv L^{\psi}_{x}(\tM)$ (where $\psi = \cosh -1$) of noncommutative random variables with a fixed expectation.
\end{theorem}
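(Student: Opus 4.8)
The plan is to transport the problem to the commutative probability space $((0,\infty), \nu_x)$, where $\nu_x(ds) = \mu_s(x)\,ds$ has total mass $\tau(x) = 1$ (recall $x \in m_\tau^+$ corresponds to a state). Every $g \in \tM$ enters the defining condition of $L^{quant}_x$ only through its singular value function $\mu(g)$, which I regard as a non-negative random variable on $((0,\infty),\nu_x)$. The target is to prove $L^{quant}_x = L^\psi_x(\tM)$ with $\psi = \cosh - 1$, and then to identify the fixed-expectation slice, mirroring the classical Pistone-Sempi theorem applied on this very probability space.

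First I would unwind both descriptions. By definition of the weighted space, $g \in L^\psi_x(\tM)$ means $\mu(g) \in L^{\cosh-1}((0,\infty),\nu_x)$, i.e. $\int_0^\infty (\cosh(\lambda\mu_s(g)) - 1)\,\mu_s(x)\,ds < \infty$ for some $\lambda > 0$. On the other hand $g \in L^{quant}_x$ means $0 \in D(\widehat{\mu_x^g})^0$, where $\widehat{\mu_x^g}(t) = \int_0^\infty \exp(t\mu_s(g))\,\mu_s(x)\,ds$.

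The crux is the equivalence of these two integrability conditions, and here the non-negativity of singular values is decisive. Since $\mu_s(g) \geq 0$, we have $\exp(t\mu_s(g)) \leq 1$ whenever $t \leq 0$, so $\widehat{\mu_x^g}(t) \leq \nu_x((0,\infty)) = 1$ automatically; and if $\widehat{\mu_x^g}(t_0) < \infty$ for a single $t_0 > 0$, then monotonicity in $t$ forces finiteness throughout $(-\infty, t_0]$. Hence $0 \in D(\widehat{\mu_x^g})^0$ is equivalent to the bare existence of some $t_0 > 0$ with $\int_0^\infty \exp(t_0\mu_s(g))\,d\nu_x < \infty$. The elementary bounds $\tfrac12(e^y - 1) - \tfrac12 \leq \cosh y - 1 \leq \tfrac12(e^y - 1)$ for $y \geq 0$, combined with the finiteness of $\nu_x$, then show that for this fixed $t_0$ one has $\int (\cosh(t_0\mu_s(g)) - 1)\,d\nu_x < \infty$ if and only if $\int \exp(t_0\mu_s(g))\,d\nu_x < \infty$. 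Taking $\lambda = t_0$ matches the two conditions and yields $L^{quant}_x = L^\psi_x(\tM)$.

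It remains to recognize this set as a closed subspace and to install the fixed-expectation constraint. That $L^\psi_x(\tM)$ is a Banach space, hence norm-closed, is immediate from Theorem \ref{ncbf}, since $\psi = \cosh - 1$ is an Orlicz function whose Luxemburg norm is rearrangement invariant and enjoys the Fatou property. The fixed-expectation condition then defines a closed subspace as a level set of the relevant bounded expectation functional, exactly as the zero-expectation condition does in the classical theorem. I expect the main obstacle to lie not in any single estimate but in justifying the reduction itself: confirming that the analytic condition on $\widehat{\mu_x^g}$, which sees $g$ only through $\mu(g)$ and $\mu(x)$, faithfully detects Orlicz membership. The delicate point is reconciling the one-sidedness forced by $\mu_s(g) \geq 0$ with the symmetric Orlicz function $\cosh - 1$; the bounded-difference estimate above is exactly what bridges this gap, and it rests essentially on $\nu_x$ being a finite (indeed probability) measure.
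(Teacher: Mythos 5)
Your proof is correct and follows essentially the same route as the paper's: both reduce the question to the commutative weighted Orlicz space $L^{\cosh-1}((0,\infty),\mu_t(x)\,dt)$ and exploit the finiteness of the measure $\nu_x$ to pass back and forth between integrability of $\cosh(\lambda\mu(g))-1$ and of $e^{\lambda\mu(g)}$. The only cosmetic difference is in handling the neighborhood-of-zero condition: you use non-negativity of $\mu(g)$ plus monotonicity in $t$ to see the domain of the transform is an interval containing $(-\infty,0]$, while the paper invokes convexity of $t\mapsto e^{t\mu(g)}$ to interpolate between $\pm 1/a$; both give the same conclusion.
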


\medskip

In the above, the noncommutative space $L^{\psi}_{x}(\tM)$, is the Banach Function space defined by $f \in L^{\psi}_{x}(\tM) \Leftrightarrow \mu(f) \in L^{\psi}((0, \infty), \mu_t(x)dt)$. This space is a quantization (in two steps) of the space $L^{\cosh - 1}(f \cdot \nu)$ in the Pistone-Sempi theorem, in the following sense: Proceeding from $x \in m_{\tau}$, we view the decreasing rearrangement $t \to \mu_t(x)$ of $x$ as some sort of density of a probability measure, and use this density to produce the classical weighted Orlicz space $L^{\psi}((0, \infty), \mu_t(x)dt)$. By \cite[Theorem 4.8.9]{BS}, the Luxemburg norm $\|\cdot\|_{\cosh-1}$ on this Orlicz space is a rearrangement invariant function norm satisfying the Fatou Property. These facts can also be seen directly. For example to see that the norm is rearrangement invariant, we may simply apply Proposition \ref{DPvsK} to the von Neumann algebra $L^\infty((0, \infty), \nu_x)$ equipped with the fns trace $f \to \int_0^\infty f(t)\mu_t(x)dt$. It is moreover easy to see that $0 \leq f_n \uparrow f$ ~~ $\mu_t(x)dt$-a.e if and only if $0 \leq (\cosh-1)(f_n) \uparrow (\cosh-1)(f)$ $\mu_t(x)dt$-a.e. The fact that then $\rho(f_n) \uparrow \rho(f)$, therefore follows from the usual monotone convergence theorem. Since this is a quantized version of a 
\emph{weighted} Orlicz space on $(0,\infty)$, results like Proposition \ref{DPvsK} do not apply. (The extent to which this space resembles spaces like $L^\psi(\tM)$, will be discussed at a later stage.)

The space $L^{\psi}_{x}(\tM)$ is then the noncommutative version of $L^{\psi}((0, \infty), \mu_t(x)dt)$, defined in the spirit of the prescription originally given in \cite{DDdP}. The fact that $L^{\psi}_{x}(\tM)$ is a concrete well-defined Banach space, follows from Theorem \ref{ncbf}. The primary difference between the space constructed here, and the version discussed in \cite{DDdP}, is that Lebesgue measure has been replaced with the measure $\mu_t(x)dt$.

\medskip

\begin{proof}[Proof of Theorem \ref{QPS}]
Assume $g \in L_{x}^{\psi}(\tM)$ with $\psi \equiv \cosh - 1$.
Then $\mu(g)$ belongs to $L^{\psi}((0, \infty), \mu_t(x)dt)$.
Hence, there exists $a>0$ such that 
\newline
$E_x(\frac{1}{2}(\exp(\frac{\mu(g)}{a}) +  \exp(\frac{-\mu(g)}{a})) - 1)) < \infty,$
where $E_x(\phi) \equiv \int_0^{\infty} \phi(t) \mu_t(x) dt$.
However, as $E_x(-1) = \int_0^{\infty} (-1) \mu_t(x))dt < \infty$ then
$E_x((\exp(\frac{\mu(g)}{a}) +  \exp(\frac{-\mu(g)}{a}))) < \infty$. But as $(\frac{-1}{a}, \frac{1}{a}) \ni t \mapsto e^{t\mu(g)}$ is convex then 
\begin{equation}
e^{\alpha (\frac{-1}{a}) \mu(g) + (1 - \alpha) \frac{1}{a} \mu(g)} \leq \alpha e^{\frac{-\mu(g)}{a}} + (1 - \alpha)e^{\frac{\mu(g)}{a}}.
\end{equation}
with $\alpha \in [0,1]$.
Thus
\begin{equation}
\int_0^{\infty} e^{s\mu(g)} \mu_t(x) dt < \infty,
\end{equation}
for $s \in (\frac{-1}{a}, \frac{1}{a})$.
Consequently, $g \in L^{quant}_{x}$.

Conversely, let $g \in L^{quant}_{x}$.
Then, there exists $s$ such that both $s$ and $-s$ are in the domain of $\widehat{\mu_t(g)}$. This means that 
\begin{equation}
E_{x}(e^{s\mu(g)} + e^{-s\mu(g)}) < \infty.
\end{equation}
Consequently
\begin{equation}
\mu_t(g) \in L^{\psi}(\bR^+, \mu_t(x)dt).
\end{equation}
but this means $g \in L_{x}^{\psi}(\tM)$
\end{proof}

Therefore, to get the noncommutative regular statistical model, in Definition \ref{qsm}, one should restrict $\tM$ to $\{ L^{quant}_{x} \}$. Furthermore, we have
   
\begin{corollary}
There exists a quantum analog of the Pistone-Sempi theorem. Moreover, non-commutative Orlicz spaces are as well motivated for a description of noncommutative regular statistical models as the classical Orlicz spaces for classical regular statistical models.
\end{corollary}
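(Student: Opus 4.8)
The plan is to prove Theorem \ref{QPS} by establishing the two set-theoretic inclusions $L^{quant}_x \subseteq L^{\psi}_x(\tM)$ and $L^{\psi}_x(\tM) \subseteq L^{quant}_x$, which together show the two spaces coincide as sets; the closedness and the ``fixed expectation'' structural remark are then read off from Theorem \ref{ncbf}. The entire argument hinges on translating both membership conditions into statements about the single classical weighted integral $E_x(\phi) \equiv \int_0^\infty \phi(t)\,\mu_t(x)\,dt$, where $\mu(x)$ plays the role of a probability density (recall $\int_0^\infty \mu_t(x)\,dt = \tau(x) = 1$ for $x$ corresponding to a state). Once everything is phrased in terms of $E_x$, the problem reduces to the classical fact underlying the Pistone-Sempi theorem, namely that finiteness of the two-sided exponential integral is equivalent to membership in the $\cosh - 1$ Orlicz space.

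First I would handle the forward inclusion. Assuming $g \in L^{\psi}_x(\tM)$, I unwind the definition of the weighted Orlicz space to obtain some $a > 0$ with $E_x(\psi(\mu(g)/a)) < \infty$, i.e.
\begin{equation}
E_x\!\left(\tfrac{1}{2}\bigl(e^{\mu(g)/a} + e^{-\mu(g)/a}\bigr) - 1\right) < \infty.
\end{equation}
Since $E_x(1) = \int_0^\infty \mu_t(x)\,dt = 1 < \infty$, I may absorb the constant and conclude $E_x(e^{\mu(g)/a} + e^{-\mu(g)/a}) < \infty$. The key step is then to pass from the two endpoints $\pm 1/a$ to an entire neighbourhood of the origin: using the convexity of $t \mapsto e^{t\mu(g)}$ on $(-1/a, 1/a)$, any intermediate exponent is dominated by a convex combination of the two endpoint exponentials, so $\int_0^\infty e^{s\mu(g)}\mu_t(x)\,dt < \infty$ for every $s \in (-1/a, 1/a)$. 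This says precisely that $0$ lies in the interior of the domain of $\widehat{\mu_x^g}$, giving $g \in L^{quant}_x$.

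For the reverse inclusion I would start from $g \in L^{quant}_x$, which by definition places $0$ in the interior of the domain of the transform and hence furnishes some $s > 0$ with both $s$ and $-s$ in that domain; adding the two finite integrals yields $E_x(e^{s\mu(g)} + e^{-s\mu(g)}) < \infty$. Since $\psi(u) = \cosh(u) - 1 = \tfrac{1}{2}(e^u + e^{-u}) - 1$ and the constant $1$ integrates to $1$ against $\mu(x)$, this immediately gives $E_x(\psi(s\,\mu(g))) < \infty$, which is the Luxemburg-norm membership condition $\mu(g) \in L^{\psi}(\bR^+, \mu_t(x)\,dt)$, i.e. $g \in L^{\psi}_x(\tM)$. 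Finally, the characterisation as a \emph{closed} subspace is not a separate computation: Theorem \ref{ncbf} already guarantees that $L^{\psi}_x(\tM)$ is a genuine Banach space, and the zero-expectation (or fixed-expectation) condition cuts out a closed hyperplane, since the expectation functional is continuous on this space.

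I expect the main obstacle to be the convexity step in the forward direction — specifically, being careful that one is entitled to conclude integrability on the full open interval from integrability only at the two symmetric endpoints. The subtlety is that $\mu(g)$ is a genuine function taking values in $[0,\infty]$ rather than a scalar, so the convexity inequality must be applied pointwise in $t$ and then integrated, and one must verify that the dominating convex combination is itself $E_x$-integrable (which it is, being a finite sum of the two endpoint terms already known to be integrable). Everything else is a transcription of the classical Pistone-Sempi mechanism into the language of the weighted rearrangement $\mu_t(x)\,dt$, with Theorem \ref{ncbf} supplying the Banach-space infrastructure that makes the closed-subspace assertion meaningful.
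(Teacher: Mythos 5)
Your proposal reproduces, in substance, exactly the paper's own route: the Corollary rests on Theorem \ref{QPS}, which the paper proves by the same two inclusions you describe — translating membership in $L^{\psi}_x(\tM)$ into finiteness of $E_x(e^{\mu(g)/a}+e^{-\mu(g)/a})$ by absorbing the constant term, using pointwise convexity of $s \mapsto e^{s\mu_t(g)}$ to extend integrability from the endpoints $\pm 1/a$ to the whole interval for the forward direction, and adding the two finite endpoint integrals for the converse. Your added remark that the fixed-expectation condition cuts out a closed subspace via Theorem \ref{ncbf} is a reasonable gloss on the structural part of the statement, which the paper itself leaves implicit.
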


To fully clarify the role of $\mu_t(x)$, we end this Section with

\begin{proposition} For any $0 \neq x \in L^1_+(\widetilde{\M})$, the quantity 
$$\tau_x(f) = \int_0^\infty \mu_t(f)\mu_t(x)dt \quad f \in \M^+$$(used implicitly in Theorem \ref{QPS}) is 
\emph{almost} a normal finite faithful trace in the sense that
\begin{itemize}
\item $\tau_x$ is subadditive, positive-homogeneous, and satisfies $\tau_x(a^*a) = \tau_x(aa^*)$ for every $a \in \M$;
\item $\tau_x(\I) < \infty$, and for any $a \in \M^+$ the situation $\tau_x(a) = 0$ forces $a=0$;
\item $\sup_n \tau_x(f_n) = \tau_x(f)$ for every sequence $\{f_n\}$ in $\M^+$ increasing to some $f \in \M^+$.
\end{itemize}
\end{proposition}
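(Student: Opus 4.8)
I would treat the three clusters of properties separately; the first two are quick consequences of the Fack--Kosaki calculus, while the third (normality) is where the real work lies.

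For the first cluster I would read positive-homogeneity and the trace identity straight off the properties of generalised singular values \cite{FK}. Since $\mu_t(\lambda f)=\lambda\mu_t(f)$ for $\lambda\ge 0$, integrating against $\mu_t(x)$ gives $\tau_x(\lambda f)=\lambda\tau_x(f)$. For the trace identity, recall $\mu_t(a)=\mu_t(a^*)=\mu_t(|a|)$ and $\mu_t(|a|^2)=\mu_t(a)^2$; hence $\mu_t(a^*a)=\mu_t(a)^2=\mu_t(a^*)^2=\mu_t(aa^*)$ for every $t$, and integrating yields $\tau_x(a^*a)=\tau_x(aa^*)$. The only point here needing a genuine inequality is subadditivity: the Fack--Kosaki submajorisation $\int_0^\alpha\mu_t(f+g)\,dt\le\int_0^\alpha[\mu_t(f)+\mu_t(g)]\,dt$ holds for all $\alpha>0$, and since $t\mapsto\mu_t(x)$ is non-negative and decreasing, Hardy's Lemma \cite[Proposition 2.3.6]{BS} upgrades this to $\int_0^\infty\mu_t(f+g)\mu_t(x)\,dt\le\int_0^\infty[\mu_t(f)+\mu_t(g)]\mu_t(x)\,dt$, i.e. $\tau_x(f+g)\le\tau_x(f)+\tau_x(g)$.

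For the second cluster, finiteness is immediate: $\mu_t(\I)\le\|\I\|=1$ forces $\tau_x(\I)\le\int_0^\infty\mu_t(x)\,dt=\|x\|_1<\infty$. For faithfulness, suppose $a\in\M^+$ with $\tau_x(a)=0$. As the integrand $\mu_t(a)\mu_t(x)$ is non-negative it must vanish for a.e.\ $t$; since $x\neq 0$ and $t\mapsto\mu_t(x)$ is decreasing, $\mu_t(x)>0$ on some initial interval $(0,t_x)$, whence $\mu_t(a)=0$ for a.e.\ $t\in(0,t_x)$. Because $t\mapsto\mu_t(a)$ is decreasing this forces $\mu_t(a)=0$ throughout $(0,t_x)$, and right-continuity at $0$ then gives $\|a\|=\mu_0(a)=\lim_{t\downarrow0}\mu_t(a)=0$, so $a=0$.

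The third property is the crux. I would reduce it to the order-continuity of the singular-value function: if $f_n\uparrow f$ in $\M^+$, then $\mu_t(f_n)\uparrow\mu_t(f)$ for a.e.\ $t$. Granting this, monotone convergence applied to the increasing integrands $\mu_t(f_n)\mu_t(x)$ gives $\sup_n\tau_x(f_n)=\int_0^\infty\mu_t(f)\mu_t(x)\,dt=\tau_x(f)$. The substance is thus the normality of $\mu$, which I would prove through the distribution functions $\lambda_a(s)=\tau(e_{(s,\infty)}(a))$, of which $\mu(a)$ is the (right-continuous) generalised inverse. Since $a\mapsto\lambda_a(s)$ is monotone, $\lambda_{f_n}(s)\uparrow\ell(s)\le\lambda_f(s)$, and the difficulty is the reverse inequality $\ell(s)\ge\lambda_f(s)$. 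To obtain it at a level $s_2$, I would fix $s_2<s_1<s_0$ and a subprojection $q\le e_{(s_0,\infty)}(f)$ of finite trace (available by semifiniteness), so that $qfq\ge s_0 q$ in the corner $q\M q$. Writing $p_n=q-e^{q\M q}_{(s_1,\infty)}(qf_nq)$ one has the elementary estimate $p_n(qfq-qf_nq)p_n\ge(s_0-s_1)p_n$, so normality of $\tau$ gives $\tau(p_n)\le(s_0-s_1)^{-1}\tau(q(f-f_n)q)\to0$. A min--max / comparison-of-projections argument then converts the corner projection $q-p_n$ (which satisfies $(q-p_n)f_n(q-p_n)\ge s_1(q-p_n)$) into the bound $\lambda_{f_n}(s_2)\ge\tau(q-p_n)$, whence $\ell(s_2)\ge\tau(q)$; taking the supremum over such $q$ gives $\ell(s_2)\ge\lambda_f(s_0)$, and letting $s_0\downarrow s_2$ together with the right-continuity of $\lambda_f$ forces $\ell(s_2)=\lambda_f(s_2)$. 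This reverse inequality, together with the bookkeeping needed to cover the case $\lambda_f(s)=\infty$, is the main obstacle; the final passage from $\lambda_{f_n}\uparrow\lambda_f$ to $\mu_t(f_n)\uparrow\mu_t(f)$ is the routine inversion of monotone generalised inverses.
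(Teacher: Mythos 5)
Your proof is correct, but at the crux --- the normality claim in the third bullet --- you take a genuinely different route from the paper. The paper disposes of that bullet in a few lines: monotonicity of singular values gives $\limsup_n \tau_x(f_n) \le \tau_x(f)$, the cited result \cite[Proposition 1.7]{DDdP3} gives $\mu_t(f) = \liminf_n \mu_t(f_n)$, and the classical Fatou lemma then yields the reverse inequality (note that Fatou needs only the $\liminf$ statement, not monotone convergence of the singular values). You instead reprove that known fact from first principles: passing to distribution functions $\lambda_a(s) = \tau(e_{(s,\infty)}(a))$, using the corner estimate $p_n(qfq - qf_nq)p_n \ge (s_0-s_1)p_n$ together with normality of $\tau$, the comparison-of-projections step ($r \wedge e_{[0,s_2]}(f_n) = 0$ forces $r \precsim e_{(s_2,\infty)}(f_n)$, which is exactly the right tool), semifiniteness to exhaust $e_{(s_0,\infty)}(f)$ by finite-trace subprojections, and right-continuity of $\lambda_f$, and then inverting $\lambda_{f_n} \uparrow \lambda_f$ to get $\mu_t(f_n) \uparrow \mu_t(f)$ for every $t$, after which monotone convergence finishes. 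Your argument is sound; it buys self-containedness and the stronger pointwise conclusion $\mu_t(f_n)\uparrow\mu_t(f)$, at the cost of reconstructing what is essentially the content of \cite[Proposition 1.7]{DDdP3}, while the paper's citation buys brevity. There is also a minor divergence in the first bullet: the paper extracts the inequality $\int_0^t|\mu_s(a+b)-\mu_s(a)|\,\mathrm{d}s\le\int_0^t\mu_s(b)\,\mathrm{d}s$ from the proof of \cite[Theorem 3.4]{DDdP} and then applies Hardy's Lemma, whereas you use the standard submajorisation $\mu(f+g)\prec\prec\mu(f)+\mu(g)$ plus Hardy's Lemma; both are valid, yours being marginally more standard. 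The remaining items (homogeneity, the trace identity, finiteness, faithfulness) match the paper's argument essentially verbatim.
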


\begin{proof} Note that by \cite[Lemma 2.5]{FK}, we have that $\mu_t(f^*f) = \mu_t(|f|)^2 = \mu_t(f)^2 = \mu_t(f^*)^2 = \mu_t(|f^*|)^2 = \mu_t(ff^*)$ and also that $\mu_t(\alpha f) = |\alpha|\mu_t(f)$ for 
each $t >0$. This is enough to ensure that $\tau_x$ is positive-homogeneous, and satisfies the trace property $\tau_x(a^*a) = \tau_x(aa^*)$. Next let $a, b \in \M^+$ be given. From the proof of \cite[Theorem 3.4]{DDdP} it is then clear that $\int_0^t |\mu_s(a+b)- \mu_s(a)|ds \leq \int_0^t \mu_{s}(b)ds$ for any $t > 0$ (simply apply what is proved there to the set $T = [0, t]$).  In view of the fact that $t \to \mu_t(x)$ is decreasing, we may then apply Hardy's Lemma \cite[Theorem 2.3.6]{BS}, to conclude that 
\begin{eqnarray*}
\tau_x(a+b) - \tau_x(a) &=& \int_0^\infty (\mu_t(a+b)- \mu_t(a))\mu_t(x)dt\\
&\leq& \int_0^\infty |\mu_{t}(a+b) - \mu_{t}(a)|\mu_t(x)dt\\
&\leq& \int_0^\infty \mu_{t}(b)\mu_{t}(x)dt\\
&=& \tau_x(b).
\end{eqnarray*}

Let $\alpha = \tau(\I)$. Using \cite[Lemma 2.6]{FK}, the fact $\tau_x$ is finite, is then a simple consequence 
of the observation that $\tau_x(\I) = \int_0^\alpha \mu_t(\I)\mu_t(x)dt = \int_0^\alpha \mu_t(x)dt = 
\tau(x) < \infty$. 

Given any $f \in \M^+$, it is clear that if $0 = \tau_x(f) = \int_0^\infty 
\mu_t(f)\mu_t(x)dt$, then $\mu_t(f)\mu_t(x) = 0$ for all $t > 0$. (Use the fact that $t \to 
\mu_t(f)\mu_t(x)$ is decreasing.) Since $t \to \mu_t(x)$ is decreasing, we may conclude from 
the inequality $0 < \tau(x) = \int_0^\infty \mu_t(x)dt$, that there exists some $\delta > 0$ so that 
$0 \neq \mu_t(x)$ for all $0 \leq t < \delta$. But then we must have $0 = \mu_t(f)$ for all $0 < t 
< \delta$. The fact that $t \to \mu_t(f)$ is decreasing, ensures that $\mu_t(f) = 0$ for all $t > 0$, and 
hence that $\|f\| = \lim_{t\to 0^+}\mu_t(f) = 0$. 

It remains to verify the claim about increasing sequences. To this end suppose that we are given a 
sequence $\{f_n\} \subset \M$ increasing to some $f \in \M$. Since $\mu_t(f_n) \leq \mu_t(f)$ for 
each $n$ and each $t$, it is a simple matter to conclude from this that $\limsup_n \tau_x(f_n) 
\leq \tau_x(f)$. On the other hand \cite[Proposition 1.7]{DDdP3} ensures that $\mu_t(f) = \liminf_n
\mu_t(f_n)$. By the usual Fatou's lemma, this in turn enables us to conclude that $\tau_x(f) = 
\int_0^\infty \liminf_n \mu_t(f_n)\mu_t(x)dt \leq \liminf_n \int_0^\infty \mu_t(f_n)\mu_t(x)dt 
= \liminf_n \tau_x(f_n)$. We then clearly have that $\tau_x(f) = \lim_n\tau_x(f_n) = \sup_n \tau_x(f_n)$. 
\end{proof}

In the case where $\tau_x$ does happen to be a normal trace, we may use this quantity to obtain an alternative description of the space $L^{\cosh-1}_x(\widetilde{\M})$.

\begin{remark}
Let $\tau_x$ be as before, and suppose that $\tau$ is finite, and $\tau_x$ a normal trace. Then the weighted noncommutative Banach Function space $ L^{\cosh-1}_x(\tM, \tau)$ agrees up to isometry with the 
noncommutative Orlicz space $L^{\cosh-1}(\tM, \tau_x)$. (Here we have deliberately modified our usual notational convention, to in each case clearly show which trace is being used in the construction of the particular noncommutative space.)

Since both $\tau_x$ and $\tau$ are finite, it is clear that any operator affiliated to $\M$ is both 
$\tau$-measurable and $\tau_x$-measurable (see \cite[Proposition I.21(vi)]{Tp}). Hence we will simply speak of 
measurable operators in the rest of this remark.  Moreover for the Orlicz function 
$\psi = \cosh - 1$, we have that $a_\psi = 0$ and $b_\psi = \infty$. From the discussion preceding Lemma \ref{DPvsKlemma}, it is clear that for any measurable element $f$, $\psi(|f|)$ will again be measurable. By definition (see \cite{DDdP}) such an $f$ belongs to the noncommutative Banach Function Space $L^{\cosh-1}_x(\widetilde{\M}, \tau)$ if and only if $t \to \mu_t(f, \tau)$ belongs to the Orlicz space $L^{\cosh-1}((0,\infty), \mu_t(x)dt)$. But then by Proposition \ref{DPvsK}
 
\begin{tabular}{rcl}
$f \in L^{\cosh-1}(\M, \tau_x)$ & $\Leftrightarrow$ & $\tau_x(\psi(\alpha|f|)) < \infty$ for some $\alpha > 0$\\
& $\Leftrightarrow$ & $\int_0^\infty \mu_t(\psi(\alpha|f|))\mu_t(x)dt < \infty$ for some $\alpha > 0$\\
& $\Leftrightarrow$ & $\int_0^\infty \psi(\alpha(\mu_t(f))\mu_t(x)dt < \infty$ for some $\alpha > 0$\\
& $\Leftrightarrow$ & $f \in L^{\cosh-1}_x(\widetilde{\M}, \tau)$, 
\end{tabular}

\noindent where as before $\cosh - 1 = \psi$. Equality of the norms follows from the fact that $\tau_x(\psi(\frac{1}{\lambda}|f|)) = 
\int_0^\infty \psi(\frac{1}{\lambda}(\mu_t(f))\mu_t(x)dt$ for each $\lambda > 0$.
\end{remark}

\section{Defining Composition Operators}\label{defcomp}

Before proceeding to a definition of composition operators on noncommutative spaces, 
we briefly revise the conceptual framework in the classical case. Given two topological 
vector spaces $F_i(X_i) (i=1,2)$ of functions defined 
on sets $X_1$ and $X_2$ respectively, a continuous linear operator 
$C: F_1(X_1) \rightarrow F_2(X_2)$ is called a composition operator if it is 
of the form 
            $$C(f) = f{\circ}T \quad f \in F_1(X_1)$$
for some transformation $T: X_2 \rightarrow X_1$. In the various contexts,
for example $C^{(p)}, L^p$ and $H^p$ spaces, the theory of composition operators 
relates to the study of differentiable, measurable, and analytic transformations 
respectively. For an introduction to the general theory of composition operators 
see for example \cite{SM}. For a workable theory in a given context one should 
firstly be able to distinguish those transformations which induce composition 
operators, and secondly be able to distinguish those bounded linear operators which 
are indeed composition operators. Subsequent to the definition of composition 
operators on noncommutative spaces, we will look at the first of these issues.

Given Banach function spaces $L^{\rho_i}(X_i, \Sigma_i, m_i)$ ($i = 1, 2$) of measurable 
functions on given measure spaces, a continuous linear operator
     $$C: \LP{\rho_1}{1} \rightarrow \LP{\rho_2}{2}$$ 
is called a (generalised) composition operator if for some $Y \in \Sigma_2$ 
and some measurable transformation $T: Y \rightarrow X_1$ (ie. $T^{-1}(E) \in 
\Sigma_2$ whenever $E \in \Sigma_1$) $C$ is of the form
\begin{displaymath} 
C(f)(x) = \left\{ \begin{array}{ll}
       f{\circ}T(x) & x \in Y \\
       0            & x \in X{\backslash}Y 
       \end{array} \right. \qquad f \in \LP{p}{1}.
\end{displaymath}
In this case we will write $C=C_T$.

In the context of von Neumann algebras, the noncommutative analogue of a nonsingular 
measurable transformation is that of a normal Jordan $*$-morphism $J : \M_1 \to \M_2$. The 
following fact will be a useful tool in our quest to define noncommutative composition 
operators:

\begin{proposition}\cite[4.7(i)]{L1}
Let $\M_1, \M_2$ be semifinite von Neumann algebras and $J : \M_1 \to \M_2$ a Jordan 
$*$-morphism. Then $J$ extends uniquely to a continuous Jordan $*$-morphism $\widetilde{J} 
: \widetilde{\M_1} \to \widetilde{\M_2}$ iff $\tau_2 \circ J$ is $\epsilon-\delta$ absolutely 
continuous with respect to $\tau_1$ on the projection lattice of $\M_1$ (ie. for any 
$\epsilon > 0$ there exists $\delta > 0$ such that for any projection $e \in \mathbb{P}(\M_1)$ we have 
$\tau_2(J(e)) < \epsilon$ whenever $\tau_1(e) < \delta$).  
\end{proposition}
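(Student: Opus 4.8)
The plan is to treat the two implications separately; the forward (necessity) direction is short, and the reverse (sufficiency) direction carries the real content. For necessity, suppose $\widetilde{J}$ exists and is continuous for the measure topologies, yet $\tau_2\circ J$ fails to be $\epsilon$-$\delta$ absolutely continuous with respect to $\tau_1$. Then I can extract $\epsilon_0>0$ and projections $e_n\in\mathbb{P}(\M_1)$ with $\tau_1(e_n)<2^{-n}$ but $\tau_2(J(e_n))\geq\epsilon_0$. Since $\tau_1(e_n)\to 0$, the $e_n$ converge to $0$ in measure in $\widetilde{\M_1}$, so by continuity $J(e_n)=\widetilde{J}(e_n)\to 0$ in measure in $\widetilde{\M_2}$. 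But $J$ sends projections to projections (from $J(e)=J(e^2)=J(e)^2=J(e)^*$), and for a projection $q$ the elementary estimate that $\|qp\|\leq \tfrac12$ forces $q p^\perp q\geq\tfrac34 q$, and hence $\tau_2(q)\leq\tfrac43\tau_2(p^\perp)$, shows that convergence of a sequence of projections to $0$ in measure forces their traces to $0$. This contradicts $\tau_2(J(e_n))\geq\epsilon_0$, establishing necessity.

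For sufficiency I would first record the structure of $J$ on projections: it maps $\mathbb{P}(\M_1)$ into $\mathbb{P}(\M_2)$, expanding $J((e+f)^2)$ shows orthogonal projections go to orthogonal projections, and $J$ is normal and norm-nonincreasing on $\M_1$. I would then invoke the Kadison--St\o rmer structure theorem to write $J=\pi\oplus\pi'$, where $\pi$ is a normal $*$-homomorphism and $\pi'$ a normal $*$-anti-homomorphism, determined by a central projection in the von Neumann algebra generated by $J(\M_1)$. Passing to the opposite algebra $\M_2^{\mathrm{op}}$ (which is semifinite with the same trace, since $\tau_2(ab)=\tau_2(ba)$, and whose measurable operators are $(\widetilde{\M_2})^{\mathrm{op}}$ with the \emph{same} measure topology) converts $\pi'$ into a homomorphism, so it suffices to treat the homomorphic case. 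The absolute continuity hypothesis descends to each summand because $\tau_2(\pi(e)),\tau_2(\pi'(e))\leq\tau_2(J(e))$.

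For a normal $*$-homomorphism $\pi$ satisfying absolute continuity I would exhibit the extension on bounded elements and push it out by density. Concretely, $\pi$ already maps $\M_1$ into $\M_2\subset\widetilde{\M_2}$, and the continuity estimate on bounded elements is clean: given a basic neighbourhood $N(\epsilon,\eta)$ of $0$ in $\widetilde{\M_2}$, choose $\delta$ by absolute continuity so that $\tau_1(e)<\delta\Rightarrow\tau_2(J(e))<\eta$; if $a\in\M_1$ satisfies $\|ap\|\leq\epsilon$ with $\tau_1(p^\perp)\leq\delta$, then $\pi(p)$ is a projection with $\tau_2(\pi(p)^\perp)=\tau_2(\pi(p^\perp))<\eta$ and $\|\pi(a)\pi(p)\|=\|\pi(ap)\|\leq\epsilon$, so $\pi(a)\in N(\epsilon,\eta)$. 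Thus $\pi\colon\M_1\to\widetilde{\M_2}$ is uniformly continuous for the measure topologies; since $\M_1$ is dense in $\widetilde{\M_1}$ and $\widetilde{\M_2}$ is complete, $\pi$ extends uniquely to a continuous map $\widetilde{\M_1}\to\widetilde{\M_2}$. Uniqueness of $\widetilde{J}$ in the statement follows from the same density. The spectral formula $\widetilde{\pi}(a)=\int\lambda\,d\pi(e_\lambda)$ (for self-adjoint $a$ with resolution $\{e_\lambda\}$) is then simply a description of this extension, and its target is automatically in $\widetilde{\M_2}$; one can cross-check measurability directly, since writing $p_\lambda=\chi_{(\lambda,\infty)}(|a|)$ one has $\tau_1(p_\lambda)\to 0$ by $\tau_1$-measurability of $a$, whence $\tau_2(\pi(p_\lambda))=\tau_2(\chi_{(\lambda,\infty)}(\widetilde{\pi}(|a|)))\to 0$, which is exactly the Fack--Kosaki criterion \cite{FK} for $\widetilde{\pi}(a)\in\widetilde{\M_2}$.

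I expect the genuine friction to lie in two places. First, the linearity, $*$-preservation, and Jordan-multiplicativity of $\widetilde{J}$ must be carried across the limit; this is where reducing to homomorphisms via the structure theorem pays off, since for a homomorphism one only needs joint continuity of the algebraic operations in the measure topology, a known but delicate point, rather than wrestling with the Jordan triple identity $J(apa)=J(a)J(p)J(a)$ for unbounded $a$. Second, the anti-homomorphic summand must be accommodated in the continuity estimate: there $\pi'(ap)=\pi'(p)\pi'(a)$ produces control of $\|\pi'(p)\pi'(a)\|$, i.e. left multiplication by the projection $\pi'(p)$, so one must note that the measure topology at $0$ is generated equally well by left and by right multiplication (these are interchanged by the continuous involution $b\mapsto b^*$ together with $\|pb\|=\|b^*p\|$), or equivalently absorb the twist by working in $\M_2^{\mathrm{op}}$ as above. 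Once these are in place, assembling $\widetilde{J}$ from $\widetilde{\pi}$ and $\widetilde{\pi'}$ completes the proof.
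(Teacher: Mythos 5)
A preliminary remark: the paper contains no proof of this proposition at all --- it is imported verbatim from \cite[4.7(i)]{L1} and used as a black box --- so your attempt cannot be compared against an internal argument, only judged on its own merits. On that score your overall architecture is sound. The necessity argument is correct: if $\|qp\|\le\frac{1}{2}$ for projections $p,q$ then $qpq\le\frac{1}{4}q$, so $qp^\perp q\ge\frac{3}{4}q$ and $\tau_2(q)\le\frac{4}{3}\tau_2(p^\perp)$, whence a sequence of projections tending to $0$ in measure has traces tending to $0$. The sufficiency skeleton --- the St\o rmer decomposition $J=\pi\oplus\pi'$ through a central projection of the von Neumann algebra generated by $J(\M_1)$, absorption of the anti-homomorphic part into $\M_2^{\mathrm{op}}$, a measure-topology continuity estimate for $\pi$ on $\M_1$, extension by density and completeness, and joint continuity of multiplication in $\widetilde{\M_2}$ to see that the extension is again a Jordan $*$-morphism --- is a viable route, and your closing paragraph correctly isolates the delicate points.

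However, one step fails as written, and it sits at the crux of the sufficiency direction. You assert $\tau_2(\pi(p)^\perp)=\tau_2(\pi(p^\perp))$, which presupposes that $\pi$ is unital. In general $\I-\pi(p)=(\I-\pi(\I))+\pi(p^\perp)$, and $\tau_2(\I-\pi(\I))$ may be infinite (already for $\M_1=\bC$ and $\pi$ a rank-one embedding into $\B(\mathfrak{H})$), so the projection $\pi(p)$ cannot serve as the witness that $\pi(a)\in N(\epsilon,\eta)$. The repair is immediate and does not disturb the rest of your argument: take instead $q:=\I-\pi(p^\perp)$. Then $q$ is a projection, $\pi(a)q=\pi(a)-\pi(ap^\perp)=\pi(ap)$, so $\|\pi(a)q\|\le\|ap\|\le\epsilon$, while $\tau_2(\I-q)=\tau_2(\pi(p^\perp))<\eta$ exactly by absolute continuity. (Alternatively, observe that $J(\I)$ is a unit for the von Neumann algebra generated by $J(\M_1)$, and run the whole argument in the corner $J(\I)\M_2 J(\I)$, whose measure topology is the restriction of that of $\widetilde{\M_2}$; this restores unitality.) The same correction is needed in the anti-homomorphic summand, where the witness $\I-\pi'(p^\perp)$ acts on the left and one invokes the left/right symmetry of the measure topology that you already noted. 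With this one-line fix your proof goes through.
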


\begin{definition}
Let $\M_1, \M_2$ be semifinite von Neumann algebras, and $L^{\rho_1}(\widetilde{\M_1})$ and 
$L^{\rho_2}(\widetilde{\M_2})$ two noncommutative symmetric Banach Function spaces. Let $J 
: \M_1 \to \M_2$ be a normal Jordan $*$-morphism, and $\tau_2 \circ J$ be $\epsilon-\delta$ 
absolutely continuous with respect to $\tau_1$ on the projection lattice $\mathbb{P}(\M_1)$. If the 
unique continuous extension $\widetilde{J} : \widetilde{\M_1} \to \widetilde{\M_2}$ maps 
$L^{\rho_1}(\widetilde{\M_1})$ into $L^{\rho_2}(\widetilde{\M_2})$, we call the induced linear 
map $L^{\rho_1}(\widetilde{\M_1}) \to L^{\rho_2}(\widetilde{\M_2})$ a composition operator from 
$L^{\rho_1}(\widetilde{\M_1})$ into $L^{\rho_2}(\widetilde{\M_2})$, and will occasionally 
denote it by $C_J$ (in deference to the commutative practice). 
\end{definition}

We close this section with the observation that since 
$L^{\rho_i}(\widetilde{\M_i})$ injects continuously into $\widetilde{\M_i}$ \cite[4.4]{DDdP}, 
the continuity of $\widetilde{J}$, coupled with the closed graph theorem ensures that $C_J$ is 
continuous.

\section{Positive maps that induce bounded maps on Orlicz spaces}

Our first theorem is a rather simple consequence of interpolation theory.

\begin{theorem}\label{posmap}
Let $\M_1, \M_2$ be semifinite von Neumann algebras equipped with fns traces $\tau_1$ and $\tau_2$ respectively, and let $T: \M_1 \to \M_2$ be a positive map satisfying $\tau_2\circ T \leq C\tau_1$ for some constant $C > 0$. Then for 
any fully symmetric Banach function space $L^\rho(0, \infty)$, the restriction of  $T$ to $\M_1 \cap L^1(\M_1, \tau_1)$ canonically extends to a bounded map from $L^\rho(\widetilde{\M}_1)$ to $L^\rho(\widetilde{\M}_2)$.
\end{theorem}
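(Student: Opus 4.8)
The plan is to realise $T$ as a single bounded map on the Banach couple $(L^1,L^\infty)$ and then read off boundedness on $L^\rho$ from the defining property of full symmetry, exactly along the lines of the classical Calder\'on--Mityagin scheme. The two endpoint estimates are the substance; everything after that is formal.

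First I would record the endpoint bounds. Since $\M_1$ is unital and $T$ is positive, $T$ is automatically bounded as a map $\M_1\to\M_2$ with $\|T\|_{\infty\to\infty}=\|T(\I)\|=:C_\infty$: for self-adjoint $a$ one has $-\|a\|\I\le a\le\|a\|\I$, hence $-\|a\|T(\I)\le T(a)\le\|a\|T(\I)$ and $\|T(a)\|\le\|a\|\,\|T(\I)\|$, and the general case follows by splitting into real and imaginary parts. For the $L^1$ endpoint, take $a\in\M_1\cap L^1(\M_1,\tau_1)$. If $a\ge 0$ then $T(a)\ge 0$ and $\|T(a)\|_1=\tau_2(T(a))=(\tau_2\circ T)(a)\le C\tau_1(a)=C\|a\|_1$. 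Writing a self-adjoint $a$ as $a_+-a_-$, and then a general $a$ through its real and imaginary parts, the triangle inequality combined with $\tau_2\circ T\le C\tau_1$ gives $\|T(a)\|_1\le 2C\|a\|_1=:C_1\|a\|_1$. As $\M_1\cap L^1(\M_1,\tau_1)$ is $\|\cdot\|_1$-dense in $L^1(\M_1,\tau_1)$, this extends $T$ to a bounded map $T_1:L^1(\M_1,\tau_1)\to L^1(\M_2,\tau_2)$.

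Next I would glue the two maps on the sum space. The prescriptions for $T$ and $T_1$ agree on $L^1(\M_1,\tau_1)\cap\M_1$, so $Tf:=T_1f_0+Tf_1$ is independent of the decomposition $f=f_0+f_1$ ($f_0\in L^1(\M_1,\tau_1)$, $f_1\in\M_1$) and defines a bounded map on $L^1(\M_1,\tau_1)+\M_1$. Since every symmetric space lies between $L^1\cap\M$ and $L^1+\M$, this assigns a meaning to $Tf$ for every $f\in L^\rho(\widetilde{\M}_1)$. The decisive step is the resulting $K$-functional estimate: for any admissible splitting one has $\|T_1f_0\|_1+t\|Tf_1\|_\infty\le\max(C_1,C_\infty)(\|f_0\|_1+t\|f_1\|_\infty)$, so taking the infimum over decompositions gives $K(t,Tf;L^1,L^\infty)\le\max(C_1,C_\infty)\,K(t,f;L^1,L^\infty)$ for all $t>0$. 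Invoking the noncommutative identity $K(t,f;L^1,L^\infty)=\int_0^t\mu_s(f)\,ds$ (see \cite{DDdP}, \cite{FK}), this becomes
$$\int_0^\alpha\mu_t(Tf)\,dt\le\max(C_1,C_\infty)\int_0^\alpha\mu_t(f)\,dt\qquad(\alpha>0).$$

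Finally I would read off the conclusion from full symmetry. The displayed inequality is precisely Hardy--Littlewood--P\'olya submajorization of $\mu(Tf)$ by $\max(C_1,C_\infty)\mu(f)$ at the level of scalar decreasing functions on $(0,\infty)$. Since $L^\rho(0,\infty)$ is fully symmetric and $\mu(f)\in L^\rho(0,\infty)$, this forces $\mu(Tf)\in L^\rho(0,\infty)$ with $\rho(\mu(Tf))\le\max(C_1,C_\infty)\,\rho(\mu(f))$; equivalently $Tf\in L^\rho(\widetilde{\M}_2)$ with $\|Tf\|_\rho\le\max(C_1,C_\infty)\|f\|_\rho$. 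Hence $T$ restricts to a bounded map $L^\rho(\widetilde{\M}_1)\to L^\rho(\widetilde{\M}_2)$ of norm at most $\max(2C,\|T(\I)\|)$. I expect the only genuine obstacle to be the noncommutative $K$-functional identity, namely that the singular-value integral computes the $K$-functional of the couple $(L^1,\M)$; once that is granted, the endpoint bookkeeping above and the full-symmetry hypothesis close the argument immediately.
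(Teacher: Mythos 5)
Your proposal is correct, but it takes a genuinely more self-contained route than the paper. The paper proves nothing at the endpoints: it cites Yeadon \cite{Y} for the fact that $\tau_2\circ T\leq C\tau_1$ makes $T$ extend from $\M_1\cap L^1(\M_1,\tau_1)$ to a bounded map $L^1(\M_1,\tau_1)\to L^1(\M_2,\tau_2)$, and it replaces your explicit $K$-functional computation by the abstract statement that $(L^\rho(\widetilde{\M}_1),L^\rho(\widetilde{\M}_2))$ is an \emph{exact interpolation pair} for the couples $((\M_1,L^1(\M_1,\tau_1)),(\M_2,L^1(\M_2,\tau_2)))$, verified via Corollary 2.6 of \cite{DDdP2} (passage between operator-level and function-level submajorization) and concluded via Corollary 2.5 of \cite{DDdP2}. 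Your argument unpacks both citations: the direct $L^1$ bound via decomposition into positive parts is essentially Yeadon's lemma, and your Calder\'on--Mityagin step, resting on the identity $K(t,f;L^1,\M)=\int_0^t\mu_s(f)\,ds$ of \cite{FK}, is exactly the mechanism underlying the cited interpolation result. What the paper's route buys is brevity, plus the fact that the cited machinery quietly covers the one point you assert rather than prove: that every element of $L^\rho(\widetilde{\M}_1)$ lies in $L^1(\M_1,\tau_1)+\M_1$, so that your glued map is defined on all of $L^\rho(\widetilde{\M}_1)$. Full symmetry is genuinely needed for this embedding (the paper's bare function-norm axioms do not give it; note that if some $f\in L^\rho$ had $\int_0^\alpha\mu_t(f)\,dt=\infty$ for every $\alpha>0$, then every $g\in L^0$ would be submajorized by $f$, and full symmetry plus homogeneity would force $\rho$ to vanish identically, contradicting the norm axiom -- so your assertion is true, but it deserves a line of justification). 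What your route buys is an elementary, citation-light proof with an explicit norm bound $\max\{2C,\|T(\I)\|\}$. Two cosmetic points: your claim $\|T\|_{\infty\to\infty}=\|T(\I)\|$ requires the Russo--Dye theorem for equality (your real/imaginary splitting only yields $2\|T(\I)\|$, which is all you actually use), and for self-adjoint $a$ the $L^1$ constant is already $C$ rather than $2C$, since $\tau_1(a_+)+\tau_1(a_-)=\|a\|_1$; only the complex splitting costs the factor $2$.
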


\begin{proof} Firstly note that Yeadon \cite{Y} showed that under the conditions of the hypothesis, the restriction of $T$ to $\M_1 \cap L^1(\M_1, \tau_1)$ canonically extends to a bounded map from $L^1(\M_1, \tau_1)$ to $L^1(\M_2, \tau_2)$. (For a more recent version of this result also valid for Haagerup $L^p$-spaces, the reader is referred to \cite{HJX}).

The result now follows from the fact that in the language of \cite{DDdP2}, the pair $(L^\rho(\widetilde{\M}_1), L^\rho(\widetilde{\M}_2))$ is an \emph{exact interpolation pair} for the pair of Banach couples $((\M_1, L^1(\M_1, \tau_1)), (\M_2, L^1(\M_2, \tau_2))$. To see this let $x \in L^\rho(\widetilde{\M}_1)$ and $y \in \M_2 + L^1(\M_2, \tau_2)$ be given with $y \prec \prec x$ (that is with $$\int_0^\alpha \mu_t(|y|)\, \mathrm{d}t \leq \int_0^\alpha\mu_t(|x|)\, \mathrm{d}t \quad \mbox{for all} \quad \alpha > 0.)$$Then from Corollary 2.6 of 
\cite{DDdP2}, it is clear that the following implications hold: $x \in L^\rho(\widetilde{\M}_1), y \prec\prec x  \Leftrightarrow \mu(x) \in L^\rho(0, \infty), \mu(y) \prec\prec \mu(x) \Rightarrow \mu(y) \in L^\rho(0, \infty) \Leftrightarrow y \in L^\rho(\widetilde{\M}_2)$. Thus the claim follows from Corollary 2.5 of \cite{DDdP2}.
\end{proof}

\begin{remark}
From the discussion following Corollary 2.7 of \cite{DDdP2}, it is clear that the above theorem applies in particular to noncommutative Orlicz spaces.
\end{remark}

By $T: \M \to \M\subseteq \B(\mathfrak{H})$ we denote a completely positive unital normal map. 
Recall that any CP map $T$ is of the form
\begin{equation}
\label{stinespring}
T(f) = W^* \pi(f) W
\end{equation}
where $\pi: \M \to \B(\mathfrak{L})$ is a $^*$-normal representation of $\M$ in $\B(\mathfrak{L})$, and $W : \mathfrak{H} \to \mathfrak{L}$ is a linear bounded operator. It is worth pointing out that when $T$ is {\it unital}, then $W$ is an isometry.  Following Arveson \cite{Ar}, we say that a completely positive map $T: \M \to \M$ is pure if, for every completely positive map $T^{\prime}: \M \to \M$, the property $T - T^{\prime}$ is a completely positive map implies that $T^{\prime}$ is a scalar multiple of $T$. It was shown by Arveson \cite{Ar} that a non-zero  pure CP map $T$ is of the form (\ref{stinespring}) with $\pi$ being an irreducible representation. If (as is the case here) 
$T$ is a \emph{normal} pure CP map, the irreducible representation $\pi$ of $\M$ on $\B(\mathfrak{L})$ will also be normal. But in this case $\pi(\M)$ will be both irreducible and weak*-closed, whence $\pi(\M) = \B(\mathfrak{L})$.

We close this section by indicating the applicability of Theorem \ref{posmap} to Jordan $*$-morphisms and pure CP maps. 

\begin{proposition}
Let $T:\M_1 \to \M_2$ be a positive normal map and let $\mathcal{B}$ be the weak*-closed subalgebra of $\M_2$ generated by $T(\M_1)$. Then in either of the following cases there exists an fns trace $\tau_T$ on $\M_1$ satisfying $$\tau_2 \circ T \leq \tau_T:$$
\begin{itemize}
\item $T$ is a Jordan $*$-morphism for which the restriction of $\tau_2$ to $T(\M_1)$ is semifinite.
\item $\M_2 = \B(\mathfrak{H})$, and $T$ is a unital pure CP map. 
\end{itemize}
\end{proposition}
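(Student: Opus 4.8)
The plan is to construct, in each case, an auxiliary normal semifinite trace $\phi$ on $\M_1$ dominating $\tau_2\circ T$, and then to set $\tau_T=\tau_1+\phi$. Faithfulness of $\tau_T$ is then free, since $\tau_1$ is faithful, and the domination $\tau_2\circ T\le\phi\le\tau_T$ is immediate; so everything reduces to producing $\phi$ and checking that $\tau_1+\phi$ is again normal, semifinite, and a trace. Normality and the trace property are inherited from the two summands, so the one genuinely common point is semifiniteness of the sum. For this I would argue uniformly: given a nonzero $p\in\mathbb{P}(\M_1)$, first use semifiniteness of $\phi$ to pass to a nonzero $q_1\le p$ with $\phi(q_1)<\infty$, and then use semifiniteness of $\tau_1$ to find a nonzero $q\le q_1$ with $\tau_1(q)<\infty$. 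In both constructions $\phi$ will be monotone on projections, so $q\le q_1$ forces $\phi(q)\le\phi(q_1)<\infty$, whence $(\tau_1+\phi)(q)<\infty$. This is exactly the semifiniteness of $\tau_1+\phi$.

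For the first bullet I would take $\phi=\tau_2\circ J$, so that in fact $\tau_2\circ T=\phi$. This is positive and normal (as $J$ and $\tau_2$ are), and semifinite by the hypothesis that $\tau_2$ restricts to a semifinite trace on $T(\M_1)=J(\M_1)$; monotonicity on projections comes from the Jordan identity $J(q_1qq_1)=J(q_1)J(q)J(q_1)$, which for projections $q\le q_1$ yields $J(q)\le J(q_1)$. The substantive point is the trace property $\phi(a^*a)=\phi(aa^*)$. Writing $a=h+ik$ with $h,k$ self-adjoint gives $a^*a-aa^*=2i[h,k]$, so it suffices to verify $\tau_2(J([h,k]))=0$. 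Here I would invoke the standard decomposition of a normal Jordan $*$-morphism: there is a central projection $z$ of $\mathcal{B}=W^*(J(\M_1))$ such that $zJ(\cdot)$ is a $*$-homomorphism and $(\I-z)J(\cdot)$ a $*$-anti-homomorphism. On the homomorphic part $zJ([h,k])=z[J(h),J(k)]$, and on the anti-homomorphic part $(\I-z)J([h,k])=-(\I-z)[J(h),J(k)]$; since $z$ is central in $\mathcal{B}$ and $\tau_2$ is a trace, $\tau_2$ annihilates each commutator, giving $\tau_2(J([h,k]))=0$. Thus $\tau_2\circ J$ is a trace.

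For the second bullet I would use the Arveson--Stinespring structure recalled before the proposition: $T(f)=W^*\pi(f)W$ with $\pi$ a normal irreducible representation, so $\pi(\M_1)=\B(\mathfrak{L})$, and with $W$ an isometry (as $T$ is unital), so that $P=WW^*$ is a projection in $\B(\mathfrak{L})$. Since any fns trace on $\B(\mathfrak{H})$ is a positive multiple of $\tr_{\mathfrak{H}}$, I may take $\tau_2=\tr_{\mathfrak{H}}$; cyclicity of the trace on positive operators then gives, for $f\in\M_1^+$, $\tau_2(T(f))=\tr_{\mathfrak{H}}(W^*\pi(f)W)=\tr_{\mathfrak{L}}(\pi(f)P)\le\tr_{\mathfrak{L}}(\pi(f))$. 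I would therefore set $\phi=\tr_{\mathfrak{L}}\circ\pi$, which dominates $\tau_2\circ T$. As $\pi$ is a normal $*$-homomorphism, $\phi$ is a normal trace, monotone on projections; it is semifinite because, letting $\I-z_0$ denote the central projection of $\M_1$ supporting $\ker\pi$, $\pi$ restricts to a normal $*$-isomorphism $\M_1z_0\cong\B(\mathfrak{L})$ carrying $\phi$ to $\tr_{\mathfrak{L}}$, while $\phi$ vanishes identically on $\M_1(\I-z_0)$.

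The main obstacle I anticipate is different in the two cases. In the first it is the trace property of $\tau_2\circ J$, whose clean resolution really rests on the classical structure theorem splitting a Jordan $*$-morphism into homomorphic and anti-homomorphic parts; everything else there is routine. In the second it is the fact that the irreducible representation $\pi$ need not be faithful, so that $\phi=\tr_{\mathfrak{L}}\circ\pi$ is in general not faithful. This is exactly what forces the extra summand $\tau_1$ in $\tau_T$, and what makes the separate verification that adding $\tau_1$ preserves semifiniteness (the uniform argument above) a necessary rather than cosmetic step.
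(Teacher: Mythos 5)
Your overall architecture --- take $\tau_T=\tau_1+\phi$ globally rather than the paper's direct sum over the central kernel projection --- is legitimate, but there is a genuine gap exactly at the point you call the substantive one: the trace property of $\phi=\tau_2\circ J$. The principle ``a trace annihilates commutators'' is \emph{false} for semifinite traces that are not finite, even when the commutator is integrable. Concretely, on $\B(\ell^2)$ with $\tau_2=\mathrm{Tr}$, write the unilateral shift as $S=h+ik$ with $h,k$ bounded self-adjoint; then $[h,k]=\frac{1}{2i}[S^*,S]=\frac{1}{2i}P_0$ with $P_0$ a rank-one projection, so $[h,k]$ is trace class and $\mathrm{Tr}([h,k])=\frac{1}{2i}\neq 0$. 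The identity $\mathrm{Tr}(xy)=\mathrm{Tr}(yx)$ needs integrability hypotheses (one factor in $m_{\tau_2}$, or both in $n_{\tau_2}$), which you have not verified for $zJ(h)J(k)$ and cannot verify in general. Moreover, the reduction itself --- writing $\phi(a^*a)-\phi(aa^*)=2i\,\tau_2(J([h,k]))$ --- already presupposes finiteness, since a weight is only additive on its integrable part; when $\phi(a^*a)=\infty$ the equation is meaningless, and the implication $\phi(a^*a)=\infty\Rightarrow\phi(aa^*)=\infty$ is part of what must be proved. The paper's verification avoids both problems by never leaving the positive cone: with your $z$, one has $J(a^*a)=zJ(a)^*J(a)+(J(\I)-z)J(a)J(a)^*$, and applying the defining trace property of $\tau_2$ (valid in $[0,\infty]$) to $x=J(a)z$ and to $x=(J(\I)-z)J(a)$ gives $\tau_2(zJ(a)^*J(a))=\tau_2(zJ(a)J(a)^*)$ and its anti-homomorphic analogue; adding these yields $\phi(a^*a)=\phi(aa^*)$ with no finiteness assumptions. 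You need this computation (or an explicit reduction to $J(h),J(k)\in n_{\tau_2}$ plus a separate treatment of the infinite case) in place of the commutator step.

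Apart from this, your route differs from the paper's in ways that are sound and partly advantageous. The paper identifies $\ker J=e\M_1$ for a central projection $e$ (the kernel being a weak*-closed two-sided ideal) and sets $\tau_T(a)=\tau_1(ea)+\tau_2(J((\I-e)a))$, getting faithfulness from injectivity of $J$ on $(\I-e)\M_1$; your $\tau_T=\tau_1+\phi$ makes faithfulness trivial at the cost of the semifiniteness-of-the-sum argument, which you correctly supply. In the pure CP case your treatment is actually cleaner than the paper's: the relevant ideal is $\ker\pi$, whose support is central, whereas the paper's appeal to ``the central projection $e$ with $\ker(T)=e\M_1$'' is problematic, since with $P=WW^*\neq\I$ any positive $a$ with $\pi(a)=\I-P$ lies in $\ker T\setminus\ker\pi$, and $\ker T$ need not be an ideal at all. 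One shared gloss remains: semifiniteness of $\phi$ on all of $\M_1$ (the projection form your uniform argument consumes) does not follow verbatim from the hypothesis on $\tau_2|_{J(\M_1)}$; one must produce finite-trace projections \emph{inside} $J(\M_1)$ below a given $J(p)$ and pull them back through the Jordan isomorphism $(\I-e)\M_1\cong J(\M_1)$ --- though here the paper is equally terse.
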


\begin{proof}
We first consider the case where $T$ is a normal Jordan $*$-morphism. 
Now let $z$ be a central projection in $\mathcal{B}$ 
such that $a\mapsto zT(a)$ is a *-homomorphism and $a\mapsto (T(\I)-z)T(a)$ a *-antihomomorphism. If
$T(a)=0$ for some $a\in \M_1$, then $T(ab)=zT(a)T(b)+(T(\I)-z)T(b)T(a)=0$ and similarly $T(ba)=0$. Hence the 
kernel of $T$ is a two-sided ideal, which is even weak*-closed because of $T$'s normality. Thus there exists a central
projection $e$ such that $\ker(T)=e\M_1$ (see Proposition II.3.12 of \cite{Tak}). We now define $\tau_T$ on $\M_1$ by 
$$\tau_T(a) = \tau_1(ea) + \tau_2(T((\I-e)a)) \quad\mbox{for all}\quad a \in \M_1^+.$$The centrality of $e$ ensures that the restriction of $\tau_1$ to $e\M_1$ is an fns trace. What remains to be done is to show that $\tau_2(T((\I-e)\cdot))$ is an fns trace on $(\I-e)\M_1$. The faithfulness follows from the injectivity of $T$ on $(\I-e)\M_1$, whereas the normality is a consequence of the normality of both $T$ and $\tau_2$. To see that $\tau_2(T((\I-e)\cdot))$ is actually a trace, we note that for any $a \in (\I-e)\M_1$,
\begin{eqnarray*}
\tau_2(T(a^*a)) &=& \tau_2(zT(a^*a)) + \tau_2((T(\I)-z)T(a^*a))\\
&=& \tau_2(zT(a^*)T(a)) + \tau_2((T(\I)-z)T(a)T(a^*))\\
&=& \tau_2(zT(a)T(a^*)) + \tau_2((T(\I)-z)T(a^*)T(a))\\
&=& \tau_2(zT(aa^*)) + \tau_2((T(\I)-z)T(aa^*))\\
&=& \tau_2(T(aa^*)).
\end{eqnarray*}

\medskip

Now suppose that $\M_2 = \B(\mathfrak{H})$, and that $T$ is a normal unital pure CP map. From the discussion preceding this proposition it is clear that $T$ is of the form $$T(f) = W^* \pi(f) W$$where $\pi: \M \to \B(\mathfrak{L})$ is a normal $*$-homomorphism onto some $\B(\mathfrak{L})$, and $W : \mathfrak{H} \to \mathfrak{L}$ is an isometric 
injection. In the present context $\tau_T$ is then defined by $\tau_T(a) = \tau_1(ea) + \mathrm{Tr}_\mathfrak{L}(\pi((\I-e)a)) \quad\mbox{for all}\quad a \in \M_1^+$, where $e$ is the central
projection $e$ for which $\ker(T)=e\M_1$.
If therefore we can show that $$\mathrm{Tr}_\mathfrak{H}(W^*\cdot W) \leq \mathrm{Tr}_\mathfrak{L}(\cdot),$$the conclusion will follow from the case considered above. To this end let $\{x_\nu\}$ be an ONB for $\mathfrak{H}$. Using the fact that $W^*W = \I_\mathfrak{H}$, it is now an easy exercise to show that $\{W(x_\nu)\}$ is an ONS in 
$\mathfrak{K} \equiv W(\mathfrak{H}) \subseteq \mathfrak{L}$. Hence for any $a \in \B(\mathfrak{L})^+$, 
$$\mathrm{Tr}_\mathfrak{H}(W^*aW) = \sum_\nu \langle W^*aW(x_\nu) , x_\nu\rangle = \sum_\nu \langle aW(x_\nu) , W(x_\nu)\rangle \leq \mathrm{Tr}_\mathfrak{L}(a).$$
\end{proof}

\section{Describing Jordan $*$-morphisms which induce composition operators}

Given an Orlicz function $\varphi$ and a projection $e \in \M$ with $0 < \tau(e) < \infty$, $e$ will then belong to $L^\varphi(\widetilde{\M})$, and the Luxemburg-Nakano norm of $e$ will be $$\|e\|_\varphi =
 \frac{1}{\varphi^{-1}(1/\tau(e))}.$$To see this note that for any $\alpha>0$, $\varphi(\alpha e) = 
\varphi(\alpha)e$. Since for any $0 < \alpha < b_\varphi$ we then have that $$\tau(\varphi(\alpha e)) = 
\varphi(\alpha)\tau(e) < \infty,$$ it is clear that $e \in L^\varphi(\widetilde{\M})$. For the claim 
regarding the norm estimate, we may use Proposition \ref{DPvsK} to see that 
\begin{eqnarray*}
\|e\|_\varphi &=& \inf\{\lambda > 0 : \varphi\left(\frac{1}{\lambda}\right)\tau(e) \leq 1\}\\
&=& \inf\left\{\lambda > 0 : \varphi\left(\frac{1}{\lambda}\right) \leq \frac{1}{\tau(e)}\right\}\\
&=& \left[\sup\left\{\nu > 0 : \varphi(\nu) \leq \frac{1}{\tau(e)}\right\}\right]^{-1}\\
&=& \frac{1}{\varphi^{-1}(1/\tau(e))}.
\end{eqnarray*}

Throughout this section $\M_1, \M_2$ will denote semifinite von Neumann algebras respectively equipped with fns traces $\tau_1, \tau_2$. In addition $J$ will denote a normal Jordan $*$-morphism $J:\M_1 \to \M_2$ for which $\tau_2 \circ J$ is a semifinite weight on $\M_1$. (Since the modular automorphism group of $\tau_1$ is trivial, this ensures the existence of the Radon-Nikodym derivative $\frac{d \tau_2\circ J}{d \tau_1}$ as a positive operator affiliated to $\M_1$ \cite[Theorem 5.12]{PT}.)  When studying those Jordan morphisms which for a pair of Orlicz functions $\varphi_1, \varphi_2$ induce bounded linear maps from $L^{\varphi_1}(\widetilde{\M_1})$ to $L^{\varphi_2}(\widetilde{\M_2})$, this restriction is entirely reasonable and natural. To see this suppose for example that $J$ was known to restrict to a $\|\cdot\|_{\varphi_1} - \|\cdot\|_{\varphi_2}$ continuous map from $\M_1\cap L^{\varphi_1}(\widetilde{\M_1})$ to $\M_2\cap L^{\varphi_2}(\widetilde{\M_2})$ with norm $K$. For any projection $e \in \M_1$ with $\tau_1(e) < \infty$ and $J(e) \neq 0$, we would then have $$0 < \frac{1}{{\varphi_2}^{-1}(1/\tau_2(J(e)))} = \|J(e)\|_{\varphi_2} \leq K\|e\|_{\varphi_1} < \infty.$$In other words $0 < {\varphi_2}^{-1}(1/\tau_2(J(e))) < \infty$. In the case where  $a_{\varphi_2} = 0, b_{\varphi_2} = \infty$, an application of $\varphi_2$ to this inequality would then yield $\tau_2(J(e) < \infty$. Thus in this case for any projection $e \in \M_1$ we would then have that $\tau_2(J(e) < \infty$ whenever $\tau_1(e) < \infty$. This is clearly sufficient to force the semifiniteness of $\tau_2 \circ J$. The previous centred equation can be reformulated as $$\frac{1}{{\varphi_2}^{-1}(1/\tau_2(J(e)))} \leq K\frac{1}{{\varphi_1}^{-1}(1/\tau_1(e))}.$$ In the case where $a_{\varphi_1} = a_{\varphi_2} = 0, b_{\varphi_1} = b_{\varphi_2} = \infty$, this inequality forces not just the semifiniteness of $\tau_2 \circ J$, but even ensures that $\tau_2 \circ J$ is $\epsilon - \delta$ absolutely continuous with respect to $\tau_1$.

When restricting attention to Jordan $*$-morphisms, the additional structure we have to work with in this case, 
enables us to significantly sharpen the results of the previous section for this class of maps. Our goal here is to 
actually characterise those normal Jordan $*$-morphisms $J : \M_1 \to \M_2$ which 
for a given pair of ``well-behaved'' noncommutative Orlicz spaces $L^{\varphi_1}(\widetilde{\M_1})$ and 
$L^{\varphi_2}(\widetilde{\M_2})$, yield composition operators from $L^{\varphi_1}(\widetilde{\M_1})$ 
to $L^{\varphi_2}(\widetilde{\M_2})$. We point out that in the case where $\varphi_1 \neq \varphi_2$, these 
results are new, even for classical Orlicz spaces! In the case $\varphi_1 = \varphi_2$, the constraints 
on our main theorem, are exactly the same as the results in the literature. (Compare the second part of Theorem 2.2 of 
\cite{CHKM} with the main theorem of this section.)

\begin{theorem}
\label{5.1}
Let $\psi, \varphi_1, \varphi_2$ be Orlicz functions for which $\psi\circ\varphi_2 = \varphi_1$, and let $J : \M_1 \to \M_2$ be a normal Jordan $*$-morphism for which $\tau_2 \circ J$ is semifinite on $\M_1$, and $\epsilon - \delta$ absolutely continuous with respect to $\tau_1$.

Consider the following claims:
\begin{enumerate}
\item $f_J = \frac{d \tau_2\circ J}{d\tau_1} \in L^{\psi^*}(\widetilde{\M_1})$;
\item the canonical extension of $J$ to a Jordan $*$-morphism from $\widetilde{\M_1}$ to $\widetilde{\M_2}$, restricts to a bounded map $C_J$ from $L^{\varphi_1}(\widetilde{\M_1})$ to $L^{\varphi_2}(\widetilde{\M_2})$.
\end{enumerate}
The implication $(1) \Rightarrow (2)$ holds in general. If $\varphi_2$ satisfies $\Delta_2$ for all $t$, the two statements are equivalent. If $(1)$ does hold, then the norm of $C_J$ restricted to the self-adjoint portion of $L^{\varphi_1}(\widetilde{\M_1})$, is majorised by $\mathrm{max}\{1, \|f_J\|^0_{\psi^*}\}$.
\end{theorem}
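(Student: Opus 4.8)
The plan is to reduce everything to the weighted Orlicz space $L^{\varphi_1}((0,\infty),\mu_t(f_J)\,dt)$ via the singular-value machinery, and to exploit the factorisation $\psi\circ\varphi_2 = \varphi_1$. The central analytic input is Yeadon's theory of how Jordan $*$-morphisms interact with traces: since $\tau_2\circ J$ is semifinite and $\epsilon$--$\delta$ absolutely continuous with respect to $\tau_1$, we have the Radon--Nikodym derivative $f_J$ affiliated to $\M_1$, and the trace-transformation identity $\tau_2(J(a)) = \tau_1(f_J a)$ should hold for positive $a$. Combined with the fact (from Proposition \ref{DPvsK} applied to the abelian algebra generated by $f_J$) that $\tau_1(f_J g) = \int_0^\infty \mu_t(f_J)\mu_t(g)\,dt$-type estimates are available, this lets one compute the singular numbers of $\widetilde{J}(h)$ in terms of those of $h$ weighted against $\mu(f_J)$.

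\textbf{The implication $(1)\Rightarrow(2)$.} First I would take $h \in \M_1\cap L^{\varphi_1}(\widetilde{\M_1})$ self-adjoint with $\|h\|_{\varphi_1}\le 1$, so that by Proposition \ref{DPvsK} we have $\tau_1(\varphi_1(|h|))\le 1$, i.e. $\int_0^\infty \varphi_1(\mu_t(h))\,dt \le 1$. The goal is to bound $\tau_2(\varphi_2(|\widetilde{J}(h)|))$. Using that $J$ is a Jordan $*$-morphism and hence preserves the functional calculus of self-adjoint elements (so $\varphi_2(|J(h)|) = J(\varphi_2(|h|))$ after suitable care with the extension $\widetilde{J}$), together with the Radon--Nikodym identity, one obtains
\begin{equation*}
\tau_2(\varphi_2(|\widetilde{J}(h)|)) = \tau_1\bigl(f_J\,\varphi_2(|h|)\bigr).
\end{equation*}
Now the factorisation enters: writing $\varphi_2(|h|)$ and pairing it against $f_J$, the Köthe duality of Proposition \ref{kothe} gives $|\tau_1(f_J\,\varphi_2(|h|))| \le \|f_J\|^0_{\psi^*}\,\|\varphi_2(|h|)\|_\psi$, and the identity $\psi\circ\varphi_2=\varphi_1$ converts $\|\varphi_2(|h|)\|_\psi$ into a quantity controlled by $\|h\|_{\varphi_1}\le 1$, since $\tau_1(\psi(\varphi_2(|h|))) = \tau_1(\varphi_1(|h|)) \le 1$ forces $\|\varphi_2(|h|)\|_\psi \le 1$. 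Collecting constants yields $\tau_2(\varphi_2(|\widetilde{J}(h)|)) \le \|f_J\|^0_{\psi^*}$, from which a scaling argument produces $\|\widetilde{J}(h)\|_{\varphi_2}\le \max\{1,\|f_J\|^0_{\psi^*}\}$; this simultaneously establishes the norm bound in the last sentence of the theorem. Boundedness on all of $L^{\varphi_1}(\widetilde{\M_1})$ then follows by density of $\M_1\cap L^{\varphi_1}(\widetilde{\M_1})$ and continuity of $\widetilde{J}$ into $\widetilde{\M_2}$, splitting a general element into real and imaginary parts.

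\textbf{The converse under $\Delta_2$.} Assuming $(2)$ and that $\varphi_2$ satisfies $\Delta_2$ for all $t$, I would test $C_J$ against projections. For a projection $e\in\M_1$ with $\tau_1(e)<\infty$, the norm estimate $\|e\|_{\varphi_1} = 1/\varphi_1^{-1}(1/\tau_1(e))$ from the start of this section, together with the analogous formula for $\|J(e)\|_{\varphi_2}$ and the boundedness of $C_J$, produces quantitative control relating $\tau_2(J(e))$ to $\tau_1(e)$. The $\Delta_2$ condition is what makes $L^{\varphi_2}$ reflexive-like and lets one pass from these projection estimates to an integrability statement for $f_J$ itself, recovering $f_J\in L^{\psi^*}(\widetilde{\M_1})$; the role of $\Delta_2$ is precisely to guarantee that the dual pairing is nondegenerate enough that boundedness of the operator forces membership of the derivative in the predual-type space $L^{\psi^*}$.

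\textbf{Main obstacle.} The delicate point I expect to be hardest is the identity $\tau_2(\varphi_2(|\widetilde{J}(h)|)) = \tau_1(f_J\,\varphi_2(|h|))$, and more basically the commutation of the (possibly unbounded, left-continuous) Orlicz functional calculus with the extended Jordan morphism $\widetilde{J}$ on $\tau$-measurable operators. A Jordan $*$-morphism preserves squares and hence functional calculus of self-adjoint elements on the bounded level, but transporting this to $\widetilde{\M}_i$ and to a general Orlicz function $\varphi_2$ (which may have $b_{\varphi_2}<\infty$, so that $\varphi_2(|h|)$ need not exist as a measurable operator) requires the care developed in Lemma \ref{DPvsKlemma} and Proposition \ref{DPvsK}. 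I would handle this by first establishing everything for $h\in\M_1$ bounded, where the functional calculus is unproblematic and $J$'s multiplicativity-up-to-symmetrisation can be used directly, and only afterwards extending by the density and continuity already invoked, so that the troublesome unbounded functional calculus is never actually confronted on elements for which it fails to make sense.
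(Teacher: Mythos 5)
Your forward-direction computation is in substance the paper's own: the commutation $J(\varphi_2(|a|)) = \varphi_2(J(|a|))$, the trace identity $\tau_2(J(b)) = \tau_1(f_J^{1/2}bf_J^{1/2}) = \tau_1(f_Jb)$, K\"othe duality (Proposition \ref{kothe}), the factorisation $\psi\circ\varphi_2=\varphi_1$, and the convexity scaling that turns $\tau_2(\varphi_2(|J(a)|))\leq \|f_J\|^0_{\psi^*}$ into the bound $\max\{1,\|f_J\|^0_{\psi^*}\}$ are exactly the ingredients used there. The gap is in your extension step. Norm-density of $\M_1\cap L^{\varphi_1}(\widetilde{\M_1})$ in $L^{\varphi_1}(\widetilde{\M_1})$ is essentially an order-continuity property of the norm, i.e.\ it requires a $\Delta_2$-type condition on $\varphi_1$ (see the paper's closing remark) --- but the implication $(1)\Rightarrow(2)$ is asserted with no $\Delta_2$ hypothesis at all. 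Density in the measure topology plus measure-continuity of $\widetilde{J}$ does not rescue this: it transfers neither membership of $\widetilde{J}(a)$ in $L^{\varphi_2}(\widetilde{\M_2})$ nor the norm bound; for that you would need a Fatou-property argument along measure-convergent, norm-bounded truncations, which you never invoke. The paper sidesteps the issue by treating arbitrary self-adjoint $a\in L^{\varphi_1}(\widetilde{\M_1})$ directly, using the dichotomy that either $b_{\varphi_2}=\infty$ (so the functional calculus commutation makes sense for unbounded operators) or $b_{\varphi_2}<\infty$, in which case Lemmas \ref{lemma1} and \ref{lemma2} force every element of $L^{\varphi_1}(\widetilde{\M_1})$ to be bounded anyway; boundedness of $C_J$ then comes from the closed graph theorem, not from density.

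The more serious gap is the converse. Testing $C_J$ on projections, as you propose, yields only inequalities of the form $\tau_1(f_Je)=\tau_2(J(e))\leq h(\tau_1(e))$, equivalently bounds on $\int_0^t\mu_s(f_J)\,\mathrm{d}s$ as a function of $t$. That is weak-type (Marcinkiewicz-space) information: a Marcinkiewicz space with a prescribed fundamental function is in general strictly larger than the Orlicz space with the same fundamental function, so no amount of projection-testing can deliver $f_J\in L^{\psi^*}(\widetilde{\M_1})$; and ``reflexive-like''-ness is not what $\Delta_2$ supplies. The paper's actual mechanism is different: $\Delta_2$ forces $a_{\varphi_2}=0$ and $b_{\varphi_2}=\infty$, so $\varphi_2$ is an honest bijection of $[0,\infty)$ (Lemma \ref{lemma4}), and one tests $C_J$ on the substituted family $\varphi_2^{-1}(a)$ for \emph{all} $a\in L^{\psi}(\widetilde{\M_1})_+$, using $\varphi_1(\varphi_2^{-1}(a))=\psi(a)$ to see $\varphi_2^{-1}(a)\in L^{\varphi_1}(\widetilde{\M_1})$; then Lemma \ref{lemma5} (the second, crucial use of $\Delta_2$) converts $C_J(\varphi_2^{-1}(a))=\varphi_2^{-1}(J(a))\in L^{\varphi_2}(\widetilde{\M_2})$ into $\tau_2(J(a))=\tau_2(\varphi_2(\varphi_2^{-1}(J(a))))<\infty$, a step which is false without $\Delta_2$. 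This shows $J$ maps $L^{\psi}(\widetilde{\M_1})$ into $L^1(\M_2,\tau_2)$, boundedly by the closed graph theorem; after that one must still prove that $f_J$ is $\tau_1$-measurable (via \cite[Proposition 6.5]{PT}) and run a spectral-truncation and polar-decomposition argument with the noncommutative K\"othe duality results of \cite{DDdP3} to convert the bound $\tau_2(|J(a)|)\leq K\|a\|_\psi$ into $\|f_J\|^0_{\psi^*}\leq K$. None of these steps --- the $\varphi_2^{-1}$ substitution, the reduction to $L^\psi\to L^1$ boundedness, the measurability of $f_J$, or the duality bookkeeping --- appears in your sketch, so the converse as you outline it would not go through.
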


Before proceeding with the proof of this theorem, we pause to make a number of technical observations. Most of these are non-commutative versions of known facts about Orlicz functions.

\begin{lemma}\label{lemma2}
Let $\varphi$ be an Orlicz function, and $\M$ a semifinite von Neumann algebra with fns trace $\tau$. 
\begin{enumerate}
\item If $a \in \widetilde{\M}$  with $\tau(\varphi(|a|)) < \infty$, then for any $\beta \leq 1$ we have that $\varphi(\beta|a|) \in \widetilde{\M}$ with $\tau(\varphi(\beta|a|)) \leq \tau(\beta\varphi(|a|))$. In particular given $a \in L^{\varphi}(\widetilde{\M})$ with  $\|a\|_{\varphi} < 1$, we have that $\varphi(|a|) \in \widetilde{\M}$ with $\tau(\varphi(|a|)) < 1$.
\item If $a_\varphi > 0$, then for any $a \in \M$ we have $a \in L^{\varphi}(\widetilde{\M})$ with $$a_\varphi\|a\|_\varphi \leq \|a\|_\infty.$$
\item If $b_\varphi < \infty$, then for any $a \in L^{\varphi}(\widetilde{\M})$, we have $a \in \M$ with $$b_\varphi\|a\|_\varphi \geq \|a\|_\infty.$$ 
\end{enumerate} 
\end{lemma}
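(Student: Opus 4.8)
The plan is to establish each of the three claims separately, exploiting the functional calculus together with the norm formula from Proposition \ref{DPvsK} and the defining properties of an Orlicz function (convexity, monotonicity, and the role of $a_\varphi$, $b_\varphi$). Throughout I would reduce to a commutative setting by replacing $\M$ with a maximal abelian von Neumann subalgebra containing $|a|$ (as in the proof of Lemma \ref{DPvsKlemma}), so that the spectral calculus becomes ordinary functional calculus on $\sigma(|a|)$ and all inequalities can be checked pointwise on the spectrum. The key structural fact I will invoke repeatedly is that an Orlicz function is increasing and convex with $\varphi(0) = 0$, so that $\varphi(\beta u) \leq \beta\varphi(u)$ for $0 \leq \beta \leq 1$ by convexity (taking the convex combination $\beta u + (1-\beta)\cdot 0$).

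For part (1), the pointwise inequality $\varphi(\beta u) \leq \beta\varphi(u)$ for $\beta \leq 1$ lifts via the functional calculus to the operator inequality $\varphi(\beta|a|) \leq \beta\varphi(|a|)$, which guarantees both $\tau$-measurability of $\varphi(\beta|a|)$ (it is dominated by the measurable operator $\beta\varphi(|a|)$) and the trace estimate $\tau(\varphi(\beta|a|)) \leq \beta\tau(\varphi(|a|))$ by positivity and monotonicity of $\tau$. The ``in particular'' clause then follows by taking $a$ with $\|a\|_\varphi < 1$: from the definition of the Luxemburg-Nakano norm one can select $\lambda < 1$ with $\tau(\varphi(|a|/\lambda)) \leq 1$, and then apply the first part with $\beta = \lambda$ to the element $|a|/\lambda$, obtaining $\tau(\varphi(|a|)) = \tau(\varphi(\lambda \cdot |a|/\lambda)) \leq \lambda\tau(\varphi(|a|/\lambda)) \leq \lambda < 1$.

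For parts (2) and (3), I would work directly from the norm formula in Proposition \ref{DPvsK}, namely $\|a\|_\varphi = \inf\{\lambda > 0 : \tau(\varphi(|a|/\lambda)) \leq 1\}$, combined with the interpretation of $a_\varphi$ and $b_\varphi$. For (2), when $a_\varphi > 0$ and $a \in \M$, the point is that $\varphi(u) = 0$ for $u \leq a_\varphi$; so for $\lambda$ large enough that $\|a\|_\infty/\lambda \leq a_\varphi$ — that is $\lambda \geq \|a\|_\infty/a_\varphi$ — the spectral calculus gives $\varphi(|a|/\lambda) = 0$ and hence $\tau(\varphi(|a|/\lambda)) = 0 \leq 1$, placing such $\lambda$ in the admissible set and yielding $\|a\|_\varphi \leq \|a\|_\infty/a_\varphi$, which is the asserted inequality. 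For (3), when $b_\varphi < \infty$, the value $\varphi(u) = \infty$ for $u > b_\varphi$ means that for any $\lambda$ with $\tau(\varphi(|a|/\lambda)) \leq 1 < \infty$ one must have the spectral support of $|a|/\lambda$ contained in $[0, b_\varphi]$, forcing $\||a|/\lambda\|_\infty \leq b_\varphi$, i.e. $\|a\|_\infty \leq \lambda b_\varphi$; taking the infimum over admissible $\lambda$ gives $\|a\|_\infty \leq b_\varphi\|a\|_\varphi$ and in particular shows $a \in \M$.

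**The main obstacle** I anticipate is not any single inequality but the measurability bookkeeping when $b_\varphi < \infty$: as the discussion preceding Lemma \ref{DPvsKlemma} emphasizes, $\varphi(|a|)$ need not be a well-defined $\tau$-measurable operator when $|a|$ has spectrum extending beyond $b_\varphi$. Hence in part (3) I must be careful to argue the conclusion $a \in \M$ \emph{before} manipulating $\varphi(|a|)$, extracting the norm bound purely from the condition that the admissible set in the Luxemburg formula is nonempty and forces a spectral cutoff. The cleanest route is to first show that finiteness of $\tau(\varphi(|a|/\lambda))$ for some $\lambda$ already entails boundedness of $|a|$ (via the argument in Proposition \ref{DPvsK} that an unbounded spectral contribution above $b_\varphi$ would make the trace infinite on a set of positive measure), and only then convert this into the stated norm inequality.
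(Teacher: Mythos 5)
Your proposal is correct, and its skeleton --- the convexity estimate $\varphi(\beta u)\le\beta\varphi(u)$, the norm formula of Proposition \ref{DPvsK}, and the observation that finiteness of $\tau(\varphi(\cdot))$ forces the spectrum below $b_\varphi$ --- is the same as the paper's. The one genuine difference is how you get the measurability claim in part (1). The paper argues by cases: when $b_\varphi=\infty$ it uses continuity of $\varphi$ on all of $[0,\infty)$, and when $b_\varphi<\infty$ it first shows, via Lemma \ref{DPvsKlemma} and right-continuity of $t\mapsto\mu_t(a)$, that $\tau(\varphi(|a|))<\infty$ forces $\|a\|_\infty\le b_\varphi$, so that $\varphi(\beta|a|)$ is in fact bounded for $\beta<1$. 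You instead obtain measurability in one stroke from the domination $0\le\varphi(\beta|a|)\le\beta\varphi(|a|)$ inside the abelian algebra generated by the spectral projections of $|a|$ --- the same device the paper itself deploys later in Lemma \ref{lemma5} --- which avoids the case split entirely; the only point you should make explicit is that $\varphi(\beta|a|)$ is a well-defined affiliated operator to begin with (immediate, since $\varphi$ is increasing and $\beta\le1$, so $\varphi(\beta\,\cdot)$ is finite wherever $\varphi$ is). The structural price of this shortcut is that the paper's proof of part (3) recycles the boundedness fact $\|a\|_\infty\le b_\varphi$ extracted inside its proof of part (1), whereas you must, and do, recover that fact directly in (3) from the requirement that any admissible $\lambda$ in the Luxemburg formula confines the spectral support of $|a|/\lambda$ to $[0,b_\varphi]$ --- which is just the Proposition \ref{DPvsK} argument again. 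Part (2) and the ``in particular'' clause of (1) coincide with the paper's arguments.
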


\begin{proof}
\begin{enumerate}
\item Firstly let $a \in \widetilde{\M}$ be given with $\tau(\varphi(|a|)) < \infty$. If $b_\varphi = \infty$, it is trivial to see that then $\varphi(\beta|a|) \in \widetilde{\M}$ for any $0 \leq \beta$, since in that case the continuity of $\varphi$ on $[0, \infty]$ and the fact that $|a| \in \widetilde{\M}$, is enough to force this conclusion. If on the other hand $b_\varphi < \infty$, then we must have that $b_\varphi \geq \|a\|_\infty$. This may be seen by suitably modifying the first part of the proof of Proposition \ref{DPvsK}. Specifically by Lemma \ref{DPvsKlemma} we will have $$\tau(\varphi(|a|)) =  \int_0^\infty \varphi(\mu_t(|a|))\, \mathrm{d}t < \infty.$$ If for some $t_0 > 0$ we had $\mu_{t_0}(a) > b_\varphi$, then of course $\mu_{t}(a) \geq \mu_{t_0}(a) > b_\varphi$ for all $0 \leq t \leq t_0$, which would force $$\int_0^\infty \varphi(\mu_t(|a|))\, \mathrm{d}t \geq \int_0^{t_0} \varphi(\mu_t(|a|))\, \mathrm{d}t = \int_0^{t_0} \infty\, \mathrm{d}t = \infty.$$
Thus we must have $\mu_{t}(a) \leq b_\varphi$ for all $0 < t$. Since $t \to \mu_t(f)$ is right-continuous, this means that $\|a\|_\infty = \lim_{t\to 0^+}\mu_t(a) \leq b_\varphi < \infty$. But then $\beta\|a\|_\infty < b_\varphi$ for any $\beta < 1$. This means in particular that $\sigma(\beta|a|)$ is contained in  $[0, b_\varphi)$. The continuity of $\varphi$ on $[0, b_\varphi]$, then ensures that $\varphi(\beta|a|) \in \M$. Thus in either case, $\varphi(\beta|a|) \in \widetilde{\M}$ whenever $0 \leq \beta < 1$. If now we combine the convexity of $\varphi$ with the fact that $\varphi(0) = 0$, we see that $\varphi(\beta t) \leq \beta \varphi(t)$ for any $t \geq 0$ and any $\beta \leq 1$. Thus $\varphi(\beta|a|) \leq \beta \varphi(|a|)$. An application of the trace, now yields the conclusion that $$\tau(\varphi(\beta|a|)) \leq \beta \tau(\varphi(|a|)) < \tau(\varphi(|a|)).$$

Now suppose we are given $a \in L^{\varphi}(\widetilde{\M})$ with  $\|a\|_{\varphi} < 1$. From the formula for the Luxemburg-Nakano norm in Proposition \ref{DPvsK} it follows that there exists $\alpha > 1$ so that $\varphi(\alpha|a|) \in \widetilde{\M}$ with $\tau(\varphi(\alpha|a|)) \leq 1$. It then follows from what we have just proved that $\varphi(|a|) \in \widetilde{\M}$ with $$1 \geq \tau(\varphi(\alpha |a|)) > \tau(\varphi(|a|)).$$

\item Let $a_\varphi > 0$ and suppose that we are given $b \in \M$ with $\|b\|_\infty = 1$. In view of the fact that $\varphi$ vanishes on $[0, a_\varphi]$ and that $\sigma(a_\varphi |b|) \subset [0, a_\varphi]$, we have that $\varphi(a_\varphi |b|) = 0$. We may now conclude from the formula for the Luxemburg-Nakano norm in Proposition \ref{DPvsK}, that $\frac{1}{a_\varphi} \geq \|b\|_\varphi$. The claim follows on replacing $b$ with $\frac{1}{\|a\|_\infty}a$.

\item Let $b_\varphi < \infty$. Given $\epsilon > 0$ and $0 \neq a \in L^{\varphi}(\widetilde{\M})$, select $\|a\|_\varphi \leq \alpha < \|a\|_\varphi + \epsilon$ so that $\varphi(\frac{1}{\alpha}|a|) \in \widetilde{\M}$ with $\tau(\varphi(\frac{1}{\alpha}|a|)) \leq 1$. Now recall that in the proof of claim (1), we showed that when $b_\varphi < \infty$, then for any $b \in \widetilde{\M}$ with $\tau(\varphi(|b|)) < \infty$, we will have $b_\varphi \geq \|b\|_\infty$. Applying this fact to $\frac{1}{\alpha}|a|$, yields the conclusion that $b_\varphi(\|a\|_\varphi + \epsilon) > b_\varphi\alpha \geq \|a\|_\infty$.
\end{enumerate} 
\end{proof}

The following fact is a simple consequence of the above lemma.

\begin{lemma}\label{lemma5}
Let $\varphi$ be an Orlicz function which satisfies $\Delta_2$ for all $t$, and let $\M$ be a semifinite von Neumann algebra with fns trace $\tau$. Then $a \in L^{\varphi}(\widetilde{\M})$ if and only if $\varphi(|a|) \in \widetilde{\M}$ and $\tau(\varphi(|a|)) < \infty$.
\end{lemma}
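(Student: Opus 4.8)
The plan is to prove the equivalence in Lemma~\ref{lemma5} by establishing each implication separately, using Lemma~\ref{lemma2}(1) for the forward direction and the $\Delta_2$ condition for the reverse. The statement asserts that when $\varphi$ satisfies $\Delta_2$ for all $t$, membership $a \in L^\varphi(\widetilde{\M})$ is equivalent to the concrete condition that $\varphi(|a|)$ is $\tau$-measurable with finite trace.

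First I would dispose of the easier implication. Suppose $\varphi(|a|) \in \widetilde{\M}$ with $\tau(\varphi(|a|)) < \infty$. By the characterisation in Proposition~\ref{DPvsK}, to show $a \in L^\varphi(\widetilde{\M})$ it suffices to exhibit some $\beta > 0$ with $\varphi(\beta|a|) \in \widetilde{\M}$ and $\tau(\varphi(\beta|a|)) < \infty$; but taking $\beta = 1$ we are handed exactly this by hypothesis. Hence this direction holds \emph{without} needing $\Delta_2$, and is essentially immediate from the definition of the Orlicz space as recorded in Proposition~\ref{DPvsK}.

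The substantive direction is the converse, and here the $\Delta_2$ condition is essential. Assume $a \in L^\varphi(\widetilde{\M})$. By Proposition~\ref{DPvsK} there exists $\beta > 0$ with $\varphi(\beta|a|) \in \widetilde{\M}$ and $\tau(\varphi(\beta|a|)) < \infty$. If $\beta \geq 1$ we are done directly via Lemma~\ref{lemma2}(1), which (applied with the roles suitably arranged) shows $\varphi(|a|) \in \widetilde{\M}$ with finite trace. The genuine difficulty arises when $\beta < 1$: we then need to amplify the argument of $\varphi$ from $\beta|a|$ up to $|a|$, i.e.\ multiply the argument by $1/\beta > 1$, and control the resulting trace. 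This is precisely where $\Delta_2$ enters. Choosing an integer $n$ with $2^n \geq 1/\beta$, I would iterate the inequality $\varphi(2u) \leq K\varphi(u)$ to obtain $\varphi(2^n u) \leq K^n \varphi(u)$ for all $u \geq 0$. Applying this with $u = \beta|a|$ and using monotonicity of $\varphi$ (so that $\varphi(|a|) \leq \varphi(2^n\beta|a|)$), I get the pointwise/spectral estimate $\varphi(|a|) \leq K^n\,\varphi(\beta|a|)$. Since $\varphi(\beta|a|) \in \widetilde{\M}$ and $\widetilde{\M}$ is closed under the order and under scalar multiples, this domination forces $\varphi(|a|) \in \widetilde{\M}$, and taking the trace yields $\tau(\varphi(|a|)) \leq K^n \tau(\varphi(\beta|a|)) < \infty$.

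The main obstacle I anticipate is the measurability bookkeeping rather than any deep estimate: one must be careful that the spectral/functional-calculus inequality $\varphi(|a|) \leq K^n \varphi(\beta|a|)$ is legitimate at the level of $\tau$-measurable operators, which hinges on $\varphi(|a|)$ actually existing in $\widetilde{\M}$. As the discussion preceding Lemma~\ref{DPvsKlemma} notes, when $b_\varphi = \infty$ this is automatic from continuity of $\varphi$ on $[0,\infty]$, and $\Delta_2$ for all $t$ forces $b_\varphi = \infty$ (otherwise $\varphi$ jumps to $+\infty$ and the doubling inequality fails near $b_\varphi$). So I would first record that $\Delta_2$ guarantees $b_\varphi = \infty$, which secures measurability of $\varphi(|a|)$ up front and lets the domination argument proceed cleanly. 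Once measurability is in hand, the trace inequality follows by applying $\tau$ to the operator inequality, completing the proof.
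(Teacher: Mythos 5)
Your proof is correct and follows essentially the same route as the paper's: the easy implication is immediate from Proposition \ref{DPvsK}, and the substantive one rests on iterating $\Delta_2$ to obtain $\varphi(t)\leq K^{n}\varphi(\beta t)$, transferring this domination to $|a|$ by the Borel functional calculus, and applying the trace. The only divergence is in securing $\tau$-measurability of $\varphi(|a|)$: you do it up front by observing that $\Delta_2$ forces $b_\varphi=\infty$ (so $\varphi(|a|)\in\widetilde{\M}$ automatically), whereas the paper cuts by spectral projections $e$ with $\tau(\I-e)<\epsilon$ in the abelian algebra generated by the spectral projections of $|a|$ and uses $0\leq\varphi(|a|)e\leq K\varphi(\alpha|a|)e$ — your variant is a touch cleaner, as it also justifies writing $\varphi(|a|)$ as an affiliated operator in the first place.
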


\begin{proof}
The converse being trivial, assume that $a \in L^{\varphi}(\widetilde{\M})$. If $\|a\|_\varphi < 1$, we are done by Lemma 
\ref{lemma2}. If $\|a\|_\varphi \geq 1$, we may select $\alpha < 1$ so that $\|\alpha|a|\|_\varphi < 1$. Thus $\tau(\varphi(\alpha|a|)) < 1$ by Lemma \ref{lemma2}. Since $\varphi$ satisfies $\Delta_2$ for all $t$, there exists a constant $K > 0$ so that $\varphi(t) \leq K\varphi(\alpha t)$. Hence by the Borel functional calculus $\varphi(|a|) \leq K\varphi(\alpha|a|)$. Both $K\varphi(\alpha|a|))$ and $\varphi(|a|)$ are affiliated to the commutative von Neumann algebra generated by the spectral projections of $|a|$. Given $\epsilon > 0$, the $\tau$-measurability of $\varphi(\alpha|a|)$ ensures that we may select a projection $e$ in this algebra with $\tau(\I - e) < \epsilon$ and $\varphi(\alpha|a|)e \in \M$. But then since $0 \leq \varphi(|a|)e \leq K\varphi(\alpha|a|)e$, we must have that $\varphi(|a|)e \in \M$ as well. Hence $\varphi(|a|) \in \widetilde{\M}$ with in addition $\tau(\varphi(|a|)) \leq K\tau(\varphi(\alpha|a|)) < \infty$.
\end{proof}

\begin{lemma}\label{lemma1}
As before let $\psi, \varphi_1, \varphi_2$ be Orlicz functions for which $\psi\circ\varphi_2 = \varphi_1$. Then $a_{\varphi_1} \geq a_{\varphi_2}$ and $b_{\varphi_1} \leq b_{\varphi_2}$. 
\end{lemma}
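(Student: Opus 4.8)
The plan is to establish the two inequalities $a_{\varphi_1} \geq a_{\varphi_2}$ and $b_{\varphi_1} \leq b_{\varphi_2}$ directly from the defining relation $\varphi_1 = \psi \circ \varphi_2$ together with the elementary monotonicity and vanishing properties of Orlicz functions recalled in the preliminaries. Recall that $a_\varphi = \inf\{u > 0 : \varphi(u) > 0\}$ and $b_\varphi = \sup\{u > 0 : \varphi(u) < \infty\}$, and that every Orlicz function is increasing on $[0,\infty)$ with $\varphi(0) = 0$.

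\medskip

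For the claim $a_{\varphi_1} \geq a_{\varphi_2}$, I would argue by showing that $\varphi_1$ vanishes wherever $\varphi_2$ does. Suppose $0 \leq u < a_{\varphi_2}$. Then $\varphi_2(u) = 0$, and since $\psi(0) = 0$ (as $\psi$ is an Orlicz function), we get $\varphi_1(u) = \psi(\varphi_2(u)) = \psi(0) = 0$. Hence $\{u : \varphi_1(u) = 0\} \supseteq [0, a_{\varphi_2})$, which forces $a_{\varphi_1} = \inf\{u > 0 : \varphi_1(u) > 0\} \geq a_{\varphi_2}$.

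\medskip

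For the claim $b_{\varphi_1} \leq b_{\varphi_2}$, I would run the symmetric argument at the other endpoint, showing that $\varphi_1$ is infinite wherever $\varphi_2$ is. Suppose $u > b_{\varphi_2}$, so that $\varphi_2(u) = \infty$ (using the convention $\varphi_2(\infty)$-type behaviour; more carefully, $\varphi_2(u) = \infty$ for $u > b_{\varphi_2}$ by definition of the supremum, since the set where $\varphi_2$ is finite is an interval with right endpoint $b_{\varphi_2}$). Extending $\psi$ to $[0,\infty]$ by $\psi(\infty) = \infty$ as in Lemma \ref{DPvsKlemma}, and using that $\lim_{v \to \infty}\psi(v) = \infty$, we obtain $\varphi_1(u) = \psi(\varphi_2(u)) = \psi(\infty) = \infty$. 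Thus $\{u : \varphi_1(u) = \infty\} \supseteq (b_{\varphi_2}, \infty)$, giving $b_{\varphi_1} = \sup\{u : \varphi_1(u) < \infty\} \leq b_{\varphi_2}$.

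\medskip

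The only genuinely delicate point — and the step I would be most careful about — is the handling of the endpoints and the value $\infty$. One must confirm that $\varphi_2(u) = \infty$ really does hold for all $u > b_{\varphi_2}$ (not merely $\varphi_2$ finite on $[0, b_{\varphi_2}]$ and possibly jumping), which follows because the finiteness region of an Orlicz function is an interval; and one must adopt the convention $\psi(\infty) = \infty$ consistently so that the composition $\psi \circ \varphi_2$ is well-defined as an extended-real-valued map. Given these conventions, both inequalities are immediate, and no further machinery beyond the basic structural facts about Orlicz functions is needed.
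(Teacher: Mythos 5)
Your proof is correct and follows essentially the same route as the paper's own (one-line) argument: the paper likewise observes that $\psi\circ\varphi_2=\varphi_1$ forces $\varphi_1(t)=0$ whenever $\varphi_2(t)=0$ and $\varphi_1(t)=\infty$ whenever $\varphi_2(t)=\infty$, which immediately yields both inequalities. Your additional care with the convention $\psi(\infty)=\infty$ and with the fact that the finiteness region of an increasing function is an interval simply makes explicit what the paper leaves implicit.
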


\begin{proof}
In view of the equality $\psi\circ\varphi_2 = \varphi_1$, we have that $\varphi_1(t) = 0$ whenever $\varphi_2(t) = 0$, and also that $\varphi_1(t) = \infty$ whenever $\varphi_2(t) = \infty$.
\end{proof}

\begin{remark}
Let $\psi, \varphi_1, \varphi_2$ be Orlicz functions for which $\psi\circ\varphi_2 = \varphi_1$, and let $J : \M_1 \to \M_2$ be a Jordan $*$-morphism. It is clear from the previous two lemmas that in the case where $0 < a_{\varphi_2} \leq b_{\varphi_2} < \infty$, we must also have $0 < a_{\varphi_1} \leq b_{\varphi_1} < \infty$, which in turn ensures that the spaces $L^{\varphi_1}(\widetilde{\M_1}), L^{\varphi_2}(\widetilde{\M_2})$ are just isomorphic copies of $\M_1$ and $\M_2$ respectively. Thus in this case $J$ of course trivially induces a ``composition operator'' from $L^{\varphi_1}(\widetilde{\M_1})$ to $L^{\varphi_2}(\widetilde{\M_2})$ with no further restrictions on $\varphi_1$ and $\varphi_2$.
\end{remark}

\begin{lemma}\label{lemma3}
Let $\psi, \varphi_1, \varphi_2$ be Orlicz functions for which $\psi\circ\varphi_2 = \varphi_1$, and let $\M$ be a semifinite von Neumann algebra with fns trace $\tau$.  For any $a \in L^{\varphi_1}(\widetilde{\M})$ with $\|a\|_{\varphi_1} < 1$, we have that $\varphi_2(|a|) \in L^{\psi}(\widetilde{\M})$ and $\|\varphi_2(|a|)\|_{\psi} \leq \|a\|_{\varphi_1}$.
\end{lemma}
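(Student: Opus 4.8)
The plan is to reduce the claim to the definition of the Luxemburg-Nakano norm via Proposition \ref{DPvsK}, exploiting the functional identity $\psi\circ\varphi_2 = \varphi_1$. First I would fix $a \in L^{\varphi_1}(\widetilde{\M})$ with $\|a\|_{\varphi_1} < 1$. By Lemma \ref{lemma2}(1), this immediately gives that $\varphi_1(|a|) \in \widetilde{\M}$ with $\tau(\varphi_1(|a|)) < 1$. The key observation is that $\varphi_1(|a|) = \psi(\varphi_2(|a|))$ by the Borel functional calculus applied to the positive operator $|a|$ (here one uses that both $\varphi_2$ and $\psi$ are increasing, so the composition respects the spectral decomposition). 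In particular, even in the delicate case $b_{\varphi_2} < \infty$ where $\varphi_2(|a|)$ need not \emph{a priori} lie in $\widetilde{\M}$, the finiteness of $\tau(\psi(\varphi_2(|a|)))$ can be used together with Lemma \ref{DPvsKlemma} to confirm that $\varphi_2(|a|)$ is indeed $\tau$-measurable: one argues as in the proof of Proposition \ref{DPvsK} that the singular values $\mu_t(\varphi_2(|a|)) = \varphi_2(\mu_t(|a|))$ must stay below $b_\psi$ for all $t > 0$, forcing boundedness near $t = 0$.

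Once I have established that $\varphi_2(|a|) \in \widetilde{\M}$, the heart of the argument is the chain
$$\tau\big(\psi(\varphi_2(|a|))\big) = \tau\big(\varphi_1(|a|)\big) \leq 1.$$
By the characterisation of the Luxemburg-Nakano norm in Proposition \ref{DPvsK}, the inequality $\tau(\psi(\varphi_2(|a|))) \leq 1$ says precisely that $\lambda = 1$ lies in the set over which one takes the infimum defining $\|\varphi_2(|a|)\|_\psi$, and hence that $\|\varphi_2(|a|)\|_\psi \leq 1$; in particular $\varphi_2(|a|) \in L^\psi(\widetilde{\M})$. To sharpen this to the stated bound $\|\varphi_2(|a|)\|_\psi \leq \|a\|_{\varphi_1}$, I would run the same computation at the scale $\lambda = \|a\|_{\varphi_1}$ rather than at $\lambda = 1$: using the norm formula of Proposition \ref{DPvsK} one can, for any $\epsilon > 0$, select $\beta$ with $\|a\|_{\varphi_1} \leq \beta \leq (1+\epsilon)\|a\|_{\varphi_1}$ and $\tau(\varphi_1(\tfrac{1}{\beta}|a|)) \leq 1$. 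The identity $\varphi_1(\tfrac{1}{\beta}|a|) = \psi(\varphi_2(\tfrac{1}{\beta}|a|))$ then gives $\tau(\psi(\tfrac{1}{\beta}\cdot\beta\varphi_2(\tfrac{1}{\beta}|a|)))\leq 1$; the cleanest route is to note directly that $\tau(\psi(\varphi_2(\tfrac{1}{\beta}|a|)))\le 1$ yields $\|\varphi_2(\tfrac1\beta|a|)\|_\psi\le\beta$, and then handle the nonlinearity of $\varphi_2$ by the convexity estimate $\varphi_2(\tfrac{1}{\beta}|a|) \geq \tfrac{1}{\beta}\varphi_2(|a|)$ (valid since $\beta \geq \|a\|_{\varphi_1}$, which one may take to be $< 1$, so $\tfrac1\beta > 1$ and $\varphi_2(\tfrac1\beta t)\ge\tfrac1\beta\varphi_2(t)$), giving $\tfrac1\beta\|\varphi_2(|a|)\|_\psi\le\|\varphi_2(\tfrac1\beta|a|)\|_\psi\le\beta$, hence letting $\epsilon\downarrow 0$ produces the desired estimate after accounting for the scaling.

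The main obstacle I anticipate is the interplay between the nonlinearity of $\varphi_2$ and the scaling needed to recover the \emph{sharp} constant $\|a\|_{\varphi_1}$ rather than merely the bound $1$. The issue is that $\varphi_2(\tfrac{1}{\beta}|a|) \neq \tfrac{1}{\beta}\varphi_2(|a|)$, so one cannot simply pull the scaling factor through. The convexity inequality $\varphi_2(\lambda t) \geq \lambda\varphi_2(t)$ for $\lambda \geq 1$ (the reverse of the estimate used in Lemma \ref{lemma2}(1)) is exactly what bridges the gap, but care is required to ensure the direction of the inequality is correct and that the measurability of $\varphi_2(\tfrac{1}{\beta}|a|)$ is in hand at each scale. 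A secondary technical point is to verify rigorously that $\psi\circ\varphi_2$ applied through the Borel functional calculus genuinely coincides with $\varphi_1$ on the spectrum of $|a|$, including the boundary behaviour at $b_{\varphi_2}$ where infinite values may occur; here Lemma \ref{lemma1} guarantees the relevant endpoint compatibility ($b_{\varphi_1} \leq b_{\varphi_2}$), which keeps the composition well-defined.
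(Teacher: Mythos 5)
Your proposal is essentially the paper's own proof: both hinge on the functional-calculus identity $\psi(\varphi_2(|a|)) = \varphi_1(|a|)$, a singular-value argument in the style of Proposition \ref{DPvsK} to secure $\varphi_2(|a|) \in \widetilde{\M}$, the convexity estimate $\varphi_2(\alpha t) \geq \alpha\varphi_2(t)$ for $\alpha \geq 1$ to pull the scaling through $\varphi_2$, and the Luxemburg--Nakano norm formula of Proposition \ref{DPvsK}; the paper merely packages this as an inclusion between the two norm-defining sets and works throughout at the scale $\alpha = 1/\lambda > 1$, which incidentally sidesteps the corner case $\|a\|_\infty = b_{\varphi_2}$ that a scale-$1$ measurability argument must still address (and note that the bound controlling measurability there is $\alpha\mu_t(|a|) \leq b_{\varphi_2}$, i.e.\ finiteness of $\varphi_2(\alpha\mu_t(|a|))$, rather than a bound by $b_\psi$). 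Your only real slip is the claim that $\tau(\psi(\varphi_2(\frac{1}{\beta}|a|))) \leq 1$ yields $\|\varphi_2(\frac{1}{\beta}|a|)\|_\psi \leq \beta$ --- it yields $\|\varphi_2(\frac{1}{\beta}|a|)\|_\psi \leq 1$ --- but this corrected bound is exactly what your chain needs, since together with $\frac{1}{\beta}\varphi_2(|a|) \leq \varphi_2(\frac{1}{\beta}|a|)$ and monotonicity of $\|\cdot\|_\psi$ it gives $\|\varphi_2(|a|)\|_\psi \leq \beta \leq (1+\epsilon)\|a\|_{\varphi_1}$, and letting $\epsilon \downarrow 0$ finishes.
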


\begin{proof}
Suppose that we are given $a \in L^{\varphi_1}(\widetilde{\M})$ and that for some $\alpha > 1$ we have that with $\varphi_1(\alpha|a|) \in \widetilde{\M}$, and $\tau(\varphi_1(\alpha|a|)) < \infty$. By Lemma \ref{DPvsKlemma} this last inequality can of course be written as 
$$\int_0^\infty \psi(\varphi_2(\alpha\mu_t(|a|)))\, \mathrm{d}t = \int_0^\infty \varphi_1(\alpha\mu_t(|a|))\, \mathrm{d}t < \infty.$$
Similar observations to those employed in the proof of Lemma \ref{lemma2}, suffice to show that the above integral cannot be finite if $\varphi_2(\alpha\mu_{t_0}(|a|)) = \infty$ for some $t_0 > 0$. But if $\varphi_2(\alpha\mu_t(|a|)) < \infty$ for every $t > 0$, then surely $\alpha\mu_t(|a|) \leq b_{\varphi_2}$ for every $t > 0$. By the right continuity of $t \to \mu_t(f)$, this means that $\alpha\|a\|_\infty = \lim_{t\to 0^+} \alpha\mu_t(|a|) \leq b_{\varphi_2}$. Now if $b_{\varphi_2} = \infty$, $\varphi_2$ is continuous on all of $[0, \infty]$, thus ensuring that $\varphi_2(|a|) \in \widetilde{\M}$. If $b_{\varphi_2} < \infty$, then by the  above inequality, we have that $\|a\|_\infty < \alpha\|a\|_\infty \leq b_{\varphi_2}$. The continuity of $\varphi_2$ on all of $[0, b_{\varphi_2}]$ then ensures that $\varphi_2$ is both bounded and continuous on $[0, \|a\|_\infty]$. Hence in this case also $\varphi_2(|a|) \in \M \subset \widetilde{\M}$.

Next note that both $\psi(\varphi_2(\alpha|a|))$ and $\psi(\alpha\varphi_2(|a|))$ are affiliated to the commutative von Neumann algebra generated by the spectral projections of $|a|$. Moreover the convexity of $\varphi_2$ combined with the fact that $\varphi_2(0) = 0$, reveals that $\varphi_2(\alpha t) \geq \alpha \varphi_2(t)$ for any $t \geq 0$ (since $\alpha \geq 1$). Thus by the Borel functional calculus for affiliated operators, it must follow that  $\psi(\varphi_2(\alpha |a|)) \geq \psi(\alpha \varphi_2(|a|)) \geq 0$. Using this inequality, we may now modify the argument in Lemma \ref{lemma5} to show that since $\psi(\varphi_2(\alpha |a|)) = \varphi_1(\alpha |a|)$ is $\tau$-measurable, $\psi(\alpha \varphi_2(|a|))$ must also be $\tau$-measurable. This inequality then also ensures that $\tau(\psi(\varphi_2(\alpha |a|))) \geq \tau(\psi(\alpha \varphi_2(|a|)))$.

What we have proved above ensures that $$\{1 > \lambda > 0 : \varphi_1\left(\frac{1}{\lambda}|a|\right) \in \widetilde{\M}, \tau\left(\varphi_1\left(\frac{1}{\lambda}|a|\right)\right) \leq 1\}$$ $$\subset  \{\lambda > 0 : \psi\left(\frac{1}{\lambda}\varphi_2(|f|)\right) \in \widetilde{\M}, \tau\left(\psi\left(\frac{1}{\lambda}\varphi_2(|f|)\right)\right) \leq 1\}.$$If now we are given that $\|a\|_{\varphi_1} < 1$, then by the  formula for the Luxemburg-Nakano norm in Proposition \ref{DPvsK}, we must have that $$\|a\|_{\varphi_1} = \inf \{1 > \lambda > 0 : \varphi_1\left(\frac{1}{\lambda}|a|\right) \in \widetilde{\M}, \tau\left(\varphi_1\left(\frac{1}{\lambda}|a|\right)\right) \leq 1\}.$$ Combining this fact with the above inclusion, ensures that 
\begin{eqnarray*}
\|a\|_{\varphi_1} &=& \inf\{1 > \lambda > 0 : \varphi_1\left(\frac{1}{\lambda}|a|\right) \in \widetilde{\M}, \tau\left(\varphi_1\left(\frac{1}{\lambda}|a|\right)\right) \leq 1\}\\
&\geq& \{\lambda > 0 : \psi\left(\frac{1}{\lambda}\varphi_2(|f|)\right) \in \widetilde{\M}, \tau\left(\psi\left(\frac{1}{\lambda}\varphi_2(|f|)\right)\right) \leq 1\}\\
&=& \|\varphi_2(|a|)\|_\psi.
\end{eqnarray*}
\end{proof}

\begin{lemma}\label{lemma4}
Let $\varphi$ be an Orlicz function with $a_\varphi < b_\varphi$, and let $f: [0, \varphi(b_\varphi)] \to [a_\varphi, b_\varphi]$ be the (concave) inverse function of $\varphi$ restricted to $[a_\varphi, b_\varphi]$. Then $$\varphi^{-1}(t) = \left\{
\begin{array}{lll} 
f(t) & \text{if} & 0 \leq t \leq \varphi(b_\varphi)\\
b_\varphi & \text{if} & t > \varphi(b_\varphi) \end{array}\right. .$$Thus $$\varphi \circ \varphi^{-1}(t) = \left\{\begin{array}{lll} 
t & \text{if} & 0 \leq t \leq \varphi(b_\varphi)\\
\varphi(b_\varphi) & \text{if} & t > \varphi( b_\varphi) \end{array}\right. .$$
\end{lemma}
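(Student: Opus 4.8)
The plan is to work directly from the definition $\varphi^{-1}(t) = \sup\{s : \varphi(s) \le t\}$ and evaluate this supremum by splitting according to whether $t \le \varphi(b_\varphi)$ or $t > \varphi(b_\varphi)$, using that $f$ is a genuine two-sided inverse of $\varphi$ on $[a_\varphi, b_\varphi]$. First I would record the two structural facts the whole argument rests on. Since $\varphi$ is increasing and $b_\varphi = \sup\{u > 0 : \varphi(u) < \infty\}$, the set $\{u : \varphi(u) < \infty\}$ is a down-set, so $\varphi(s) = \infty$ for every $s > b_\varphi$. Moreover $\varphi$ is continuous and strictly increasing on $[a_\varphi, b_\varphi]$: continuity is part of the standing hypotheses (continuity on $[0, b_\varphi)$ together with left-continuity at $b_\varphi$), while strict monotonicity on $(a_\varphi, b_\varphi)$ follows from convexity and $\varphi(0) = 0$, since a convex function vanishing at $0$ cannot be constant at a positive finite value on a subinterval without being constant on all of $[0, b_\varphi]$. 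This is exactly what guarantees that $\varphi|_{[a_\varphi, b_\varphi]}$ is a strictly increasing continuous bijection onto $[0, \varphi(b_\varphi)]$, so that the inverse $f$ (satisfying $\varphi(f(t)) = t$) is well defined and concave.

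Next, for $0 \le t \le \varphi(b_\varphi)$ I would prove $\sup\{s : \varphi(s) \le t\} = f(t)$ by two inequalities. Because $\varphi(f(t)) = t$, the point $f(t)$ lies in the set, giving $\varphi^{-1}(t) \ge f(t)$. For the reverse, I would take any $s > f(t)$ and show $\varphi(s) > t$, so that $s$ is excluded: if $s \le b_\varphi$ this is strict monotonicity on $[a_\varphi, b_\varphi]$ (noting $f(t) \ge a_\varphi$), while if $s > b_\varphi$ then $\varphi(s) = \infty > t$ by the first structural fact. Hence no $s > f(t)$ belongs to the set and the supremum equals $f(t)$.

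For $t > \varphi(b_\varphi)$ — which can occur only when $\varphi(b_\varphi) < \infty$ — I would argue instead that every $s \le b_\varphi$ satisfies $\varphi(s) \le \varphi(b_\varphi) < t$, whereas every $s > b_\varphi$ satisfies $\varphi(s) = \infty > t$; thus $\{s : \varphi(s) \le t\} = [0, b_\varphi]$, whose supremum is $b_\varphi$. This establishes the first displayed formula, and the second is then immediate: on $[0, \varphi(b_\varphi)]$ one has $\varphi(\varphi^{-1}(t)) = \varphi(f(t)) = t$, while for $t > \varphi(b_\varphi)$ one has $\varphi(\varphi^{-1}(t)) = \varphi(b_\varphi)$.

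The argument is essentially routine; the only genuine care needed is the bookkeeping of the degenerate cases. I would note explicitly that when $b_\varphi = \infty$, or when $b_\varphi < \infty$ but $\varphi(b_\varphi) = \infty$, the range $[0, \varphi(b_\varphi)]$ already exhausts all \emph{finite} $t$, and since $\varphi^{-1}$ is only evaluated at finite $t$ the second branch of the formula is vacuous, so no separate treatment is required there. The main (mild) obstacle is pinning down the strict monotonicity of $\varphi$ on $[a_\varphi, b_\varphi]$ cleanly, since without it both the identity $\varphi(f(t)) = t$ and the implication ``$s > f(t) \Rightarrow \varphi(s) > t$'' could fail.
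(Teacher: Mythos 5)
Your proof is correct. Note that the paper itself offers no argument here --- its ``proof'' of this lemma is literally the single word ``Exercise'' --- so there is nothing to compare against; your write-up simply supplies the intended routine verification. The two points that genuinely need care (strict monotonicity of $\varphi$ on $[a_\varphi,b_\varphi]$ via convexity and $\varphi(0)=0$, and the bookkeeping that $\varphi^{-1}$ is only evaluated at finite $t$, so the second branch is vacuous when $\varphi(b_\varphi)=\infty$) are both handled correctly.
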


\begin{proof}
Exercise.
\end{proof}

We are now finally ready to prove our main theorem.

\begin{proof}[Proof of the theorem]
By assumption $\tau_2 \circ J$ is $\epsilon - \delta$ absolutely continuous with respect to $\tau_1$. This ensures that $J$ extends uniquely to a Jordan $*$-morphism from $\widetilde{\M_1}$ to $\widetilde{\M_2}$ which is continuous under the topology of convergence in measure \cite[Proposition 4.7]{L1}. We will consistently write $J$ for this extension.

First suppose that $f_J = \frac{d \tau_2\circ J}{d\tau_1} \in L^{\psi^*}(\widetilde{\M_1})$, and let $a \in L^{\varphi_1}(\widetilde{\M_1})$ be given with $a=a^*$ and $\|a\|_{\varphi_1} < 1$. Our first task is to show that then $J(a) \in L^{\varphi_2}(\widetilde{\M_2})$. Now if $b_{\varphi_2} = \infty$, the function $\varphi_2$ will be continuous on all of 
$[0, \infty]$. By approximating with polynomials, we can show that then $J(\varphi_2(|a|)) = \varphi_2(J(|a|))$. If on the other hand $b_{\varphi_2} < \infty$, then also $b_{\varphi_1} < \infty$ (Lemma \ref{lemma1}). So in this case $a \in \M_1$, with $\|J(|a|)\|_\infty \leq \|a\|_\infty < b_{\varphi_1} \leq b_{\varphi_2}$ (Lemma \ref{lemma2}). Thus $\sigma(|a|), \sigma(J(|a|)) \subset [0, b_{\varphi_2})$. The continuity of $\varphi_2$ on $[0, b_{\varphi_2}]$ therefore ensures that $\varphi_2$ will then be continuous and bounded on both $\sigma(|a|)$ and $\sigma(J(|a|))$. With this knowledge we may once again approximate with polynomials and use the functional calculus to conclude that in this case we also have $J(\varphi_2(|a|)) = \varphi_2(J(|a|))$. Noting that $|J(a)| = J(|a|)$ (since $|J(a)|^2 = J(a^2) = J(|a|)^2$), it therefore follows from Proposition \ref{kothe} and Lemma \ref{lemma3} that 
\begin{eqnarray*}
\tau_2(\varphi_2(|J(a)|)) &=& \tau_2(\varphi_2(J(|a|)))\\
&=& \tau_2(J(\varphi_2(|a|)))\\
&=& \tau_1(f_J^{1/2}\varphi_2(|a|)f_J^{1/2})\\
&=& \tau_1(f_J\varphi_2(|a|))\\
&\leq& \|f_J\|^0_{\psi^*}\|\varphi_2(|a|)\|_\psi\\
&\leq& \|f_J\|^0_{\psi^*}\|a\|_{\varphi_1}\\
&<& \infty.
\end{eqnarray*} 
(The fourth equality in the above computation follows from \cite[Proposition 5.2]{DDdP3}.) Thus $J$ maps the self-adjoint portion of $L^{\varphi_1}(\widetilde{\M_1})$ (and hence all of $L^{\varphi_1}(\widetilde{\M_1})$) into $L^{\varphi_2}(\widetilde{\M_2})$. By the Closed Graph Theorem this is enough to ensure that $J$ restricts to a bounded operator $C_J$ from $L^{\varphi_1}(\widetilde{\M_1})$ to $L^{\varphi_2}(\widetilde{\M_2})$. (If for some sequence $\{a_n\} \subset L^{\varphi_1}(\widetilde{\M_1})$ we have that $a_n \to a$ and $C_J(a_n) = J(a_n) \to b$ with respect to the ambient Orlicz topologies, then $a_n \to a$ and $J(a_n) \to b$ with respect to the measure topologies as well \cite{DDdP}. The fact that $J$ acts continuously from $\widetilde{\M_1}$ to $\widetilde{\M_2}$ then ensures that $J(a) = b$.)

We proceed to compute a more exact estimate of the norm of $C_J$ restricted to the self-adjoint portion of $L^{\varphi_1}(\widetilde{\M_1})$. As before let $a \in L^{\varphi_1}(\widetilde{\M_1})$ be given with $a=a^*$ and $\|a\|_{\varphi_1} < 1$. For any $\mu \geq \mathrm{max}\{1, \|f_J\|^0_{\psi^*}\}$, Lemma \ref{lemma2} ensures that $\varphi_2(\frac{1}{\mu}|J(a)|) \in \widetilde{\M_2}$ with $\tau_2(\varphi_2(\frac{1}{\mu}|J(a)|)) \leq \tau_2(\frac{1}{\mu}\varphi_2(|J(a)|))$. An application of Proposition \ref{kothe} and Lemma \ref{lemma3}, then shows that
\begin{eqnarray*}
\tau_2(\varphi_2(\frac{1}{\mu}|J(a)|)) &\leq& \tau_2(\frac{1}{\mu}\varphi_2(|J(a)|))\\
&=& \frac{1}{\mu}\tau_2(\varphi_2(J(|a|)))\\
&=& \frac{1}{\mu}\tau_2(J(\varphi_2(|a|)))\\
&=& \frac{1}{\mu}\tau_1(f_J\varphi_2(|a|))\\
&\leq& \frac{\|f_J\|^0_{\psi^*}}{\mu}.\|\varphi_2(|a|)\|_\psi\\
&\leq& 1.\|a\|_{\varphi_1}\\
&<& 1.
\end{eqnarray*}
Thus $\mu \in \{\lambda > 0 : \varphi_2(\frac{1}{\lambda}|J(a)|) \in \widetilde{\M_2},  \tau_2(\varphi_2(\frac{1}{\lambda}|J(a)|)) \leq 1\}$ whenever $\mu \geq \mathrm{max}\{1, \|f_J\|^0_{\psi^*}$. From the formula for the norm in Proposition \ref{DPvsK}, this clearly forces $\|C_J(a)\|_{\varphi_2} = \|J(a)\|_{\varphi_2} \leq \mathrm{max}\{1, \|f_J\|^0_{\psi^*}\}$. The claim follows. 

Conversely assume that the canonical extension of $J$ to a Jordan $*$-morphism from $\widetilde{\M_1}$ to $\widetilde{\M_2}$, restricts to a bounded map $C_J$ from $L^{\varphi_1}(\widetilde{\M_1})$ to $L^{\varphi_2}(\widetilde{\M_2})$, and that $\varphi_2$ satisfies $\Delta_2$ for all $t$. Then of course $a_{\varphi_2} = 0$ and $b_{\varphi_2} = \infty$. By Lemma \ref{lemma4} this fact means in particular that $\varphi_2$ and $\varphi^{-1}_2$ are proper inverses of each other which are continuous on all of $[0, \infty]$. Our task is to show that the above conditions force $f_J \in L^{\psi^*}(\widetilde{\M_1})$. By Proposition \ref{kothe} this will follow if we can show that $f_J \in \widetilde{\M_1}$, and that $f_Ja \in L^1(\M_1, \tau_1)$ for each $a \in L^{\psi}(\widetilde{\M_1})$. 

The first step in verifying these facts, is to show that the canonical extension of $J$ maps 
$L^\psi(\widetilde{\M_1})$ into $L^1(\widetilde{\M_2})$. To this end let $a \in L^\psi(\widetilde{\M_1})$ be given with $a \geq 0$. By scaling $a$ if necessary, we may assume without loss of generality that $\|a\|_\psi < 1$. Since $\varphi_2$ and $\varphi^{-1}_2$ are continuous on all of $[0, \infty]$, it is clear that $\varphi_2^{-1}(a) \in \widetilde{\M_1}$ and $\varphi_2^{-1}(J(a)) \in \widetilde{\M_2}$, with $$\varphi_2^{-1}(J(a)) = J(\varphi_2^{-1}(a)), \qquad \varphi_2(\varphi_2^{-1}(J(a))) = J(a),$$ $$\varphi_1(\varphi_2^{-1}(a)) = \psi(\varphi_2\circ\varphi_2^{-1}(a)) = \psi(a)$$by the Borel functional calculus. By Lemma 
\ref{lemma2}, the assumption $\|a\|_\psi < 1$ ensures that $\tau_1(\psi(a)) < 1$. In other words $\tau_1(\varphi_1(\varphi_2^{-1}(a))) < 1$. Therefore $\varphi_2^{-1}(a) \in L^{\varphi_1}(\widetilde{\M_1})$. But then $\varphi_2^{-1}(J(a)) = J(\varphi_2^{-1}(a)) = C_J(\varphi_2^{-1}(a)) \in L^{\varphi_2}(\widetilde{\M_2}).$ By Lemma \ref{lemma5}, this ensures that 
$$\tau_2(J(a)) = \tau_2(\varphi_2(\varphi_2^{-1}(J(a)))) < \infty$$ and hence that $J(a) \in L^1(\M_2, \tau_2)$. Thus the canonical extension of $J$ maps the positive part of $L^\psi(\widetilde{\M_1})$ into $L^1(\widetilde{\M_2})$. But since $L^\psi(\widetilde{\M_1})_+$ spans all of $L^\psi(\widetilde{\M_1})$, it is trivial to conclude that $J$ maps all of $L^\psi(\widetilde{\M_1})$ into $L^1(\widetilde{\M_2})$.
As before the fact that $L^\psi(\widetilde{\M_1})$ and $L^1(\widetilde{\M_2})$ respectively embed continuously into 
$\widetilde{\M_1}$ and $\widetilde{\M_2}$, coupled with the fact $J$ acts continuously from $\widetilde{\M_1}$ to $\widetilde{\M_2}$, is enough to ensure that in its action from $L^\psi(\widetilde{\M_1})$ to $L^1(\widetilde{\M_2})$, the restriction of $J$ has a closed graph. Thus by the Closed Graph Theorem there must exist a constant $K > 0$ so that $$\tau_2(|J(a)|) \leq K\|a\|_\psi \quad\mbox{for all}\quad a \in L^\psi(\widetilde{\M_1}).$$

Let $b \in \M_1^+ $ be given with $\tau_1(b) < \infty$. We want to show that then $\tau_2(J(b)) < \infty$. By suitably scaling $b$ if necessary, we may assume that $\|b\|_\infty < b_\psi$. Since $\psi$ is continuous on $[0, b_\psi]$, it is then both convex and bounded on $[0, \|b\|_\infty]$. Thus we may select $k > 0$ so that $$\psi(t) \leq kt \quad\mbox{for all}\quad 0 \leq  t \leq \|b\|_\infty.$$(Any line-segment from the origin to a point 
$(\|b\|_\infty, q)$ with $\psi(\|b\|_\infty) < q$ will do.) By the Borel functional calculus we will then have that $$0 \leq \psi(b) \leq kb,$$which in turn ensures that 
$$\tau_1(\psi(b)) \leq k\tau_1(b) < \infty.$$Thus $b \in L^\psi(\widetilde{\M_1})$. But since the canonical extension of $J$ maps $L^\psi(\widetilde{\M_1})$ into $L^1(\widetilde{\M_2})$, we must have $$\tau_1(f_J^{1/2}bf_J^{1/2}) = \tau_2(J(b)) < \infty.$$By Proposition 6.5 of 
\cite{PT}, this fact is sufficient to ensure that for any spectral projection of $f_J$ of the form $e_{[\lambda, \infty)}$, we will have $\tau_1(f_Je_{[\lambda, \infty)}) < \infty$ for $\lambda > 0$ large enough. But since $\lambda e_{[\lambda, \infty)} \leq f_Je_{[\lambda, \infty)}$, this in turn ensures that $\tau_1(e_{[\lambda, \infty)}) < \infty$ for $\lambda > 0$ large enough. In other words $f_J$ is $\tau_1$-measurable.

Finally let $b \in \M_1 \cap L^1(\M_1, \tau_1)$ be given with $\|b\|_\psi \leq 1$, and let $e_n = e_{[o,n]}$ be the spectral projection from the spectral resolution of $f_J$ corresponding to the interval $[0, n]$. Let $v$ be the partial isometry in the polar decomposition $f_J^{1/2}e_nb = v|f_J^{1/2}e_nb|$. Then $e_nbv^*e_n \in L^\psi(\widetilde{\M_1})$ with $\|e_nbv^*e_n\|_\psi \leq \|b\|_\psi \leq 1$ (see the discussion following Definition 2.1 of \cite{DDdP3}). Using the fact that $\tau_1(xy) = \tau_1(yx)$ for $x \in \M_1, y \in \M_1 \cap L^1(\M_1, \tau_1)$, we may conclude that 
\begin{eqnarray*}
\tau_1(|f_Je_nb|) &=& \tau_1((v^*f_J^{1/2}e_n)(f_J^{1/2}e_nb))\\
&=& \tau_1((f_J^{1/2}e_nb)(v^*e_nf_J^{1/2}))\\
&=& \tau_2(J(e_nbv^*e_n))\\
&\leq& \tau_2(|J(e_nbv^*e_n)|)\\
&\leq& K\|e_nbv^*e_n\|_\psi\\
&\leq& K 
\end{eqnarray*}
So by Proposition 5.3(ii) of \cite{DDdP3}, we have that $f_Je_n \in L^{\psi^*}(\widetilde{\M_1})$, with $\|f_Je_n\|^0_{\psi^*} 
\leq K$. But then $\|f_J\|^0_{\psi^*} \leq K$ by \cite[Proposition 5.4(ii)]{DDdP3}. Thus as required, $f_J \in L^{\psi^*}(\widetilde{\M_1})$ by \cite[Proposition 5.3(ii)]{DDdP3}.
\end{proof}

In closing we make a final comment regarding the significance of the $\Delta_2$ condition in this context.

\begin{remark}
Let $\M$ be semifinite von Neumann algebra with fns trace $\tau$, and let $L^\rho(0, \infty)$ be a classical Banach Function Space on $[0, \infty)$. It is clear from  \cite[1.3.8]{BS} that this space will have an absolutely continuous norm in the sense of \cite{BS} if and only if it has an order continuous norm in the sense of \cite{DDdP3}. Now consider the specific case where for some Orlicz function $\varphi$, $L^\rho(0, \infty) = L^\varphi(0, \infty)$ is the associated Orlicz space. If we combine the above observation with the discussion on p 96 of \cite{KR}, we see that (at least for the case of Young's functions) $\varphi$ satisfies $\Delta_2$ for all $t$ if and only if $L^\varphi(0, \infty)$ has order continuous norm. But by \cite[Proposition 3.6]{DDdP3}, $L^\varphi(\widetilde{\M})$ will have order continuous norm whenever $L^\varphi(0, \infty)$ has order continuous norm. Thus the $\Delta_2$ condition is intimately related to the question of whether $L^\varphi(\widetilde{\M})$ has order continuous norm.

The presence of an order continuous norm in turn puts us in a position where we can try approximate each element of $L^\varphi(\widetilde{\M})$ by ``simple functions''. Given some positive element $a$ of $L^\varphi(\widetilde{\M})$, the idea is to try and find a sequence $\{a_n\}$ of Riemann sums of the form $a_n = \sum_{k=1}^m \lambda_ke_k$ for which $a - a_n$ decreases to 0 in $\widetilde{\M}$. (For each  $a_n = \sum_{k=1}^m \lambda_ke_k$, the $e_k$'s are mutually orthogonal projections from the spectral resolution of $a$ for which $\tau(e_k) < \infty$.)  The presence of an order continuous norm on $L^\varphi(\widetilde{\M})$, then ensures that $a_n \to a$ in the $\|\cdot\|_\varphi$ norm.
\end{remark}

\section{Acknowledgments}

The support of a grant under the Poland-South Africa Cooperation Agreement is gratefully acknowledged.


\begin{thebibliography}{888888}

\bibitem[A]{Ar} W. B. Arveson, Subalgebras of $C^*$-algebras, {\em Acta Math.} {\bf 123}, 141-224 (1969)

\bibitem[AB] {AB} C.D. Aliprantis and O. Burkinshaw, \emph{Locally Solid Riesz Spaces}, Academic Press, New York, 1978.

\bibitem[ARZ] {ARZ} M H A Al-Rashed and B Zegarlinski, Noncommutative Orlicz spaces associated to a state, \textit{Studia Math} \textbf{180} (2007) 199209

\bibitem[BS] {BS} G Bennet and R Sharpley, \textit{Interpolation of Operators}, 
Academic Press, London, 1988.

\bibitem[BrR] {BRo} O Bratteli and D W Robinson, \textit{Operator Algebras and
  Quantum Statistical Mechanics: Vol 1 (2nd ed)}, Springer, New York, 1987.

\bibitem[CHKM] {CHKM} Y Cui, H Hudzik, R Kumar and L Maligranda, Composition operators in Orlicz spaces, \textit{J Aust Math Soc} \textbf{76}(2004), 189-206.

\bibitem[D]{D} J. Dixmier, Les alg\'ebres d'op\'erateurs dans l'espace Hilbertien, Paris, Gautier-Villars, 1969.

\bibitem[DDdP1] {DDdP} PG Dodds, T K.-Y Dodds and B de Pagter, Non-commutative 
Banach function spaces, \textit{Math Z} \textbf{201}(1989), 583-597.

\bibitem[DDdP2] {DDdP2} PG Dodds, T K.-Y Dodds and B de Pagter, Fully symmetric operator spaces, 
\textit{Integr Equat Oper Th} \textbf{15}(1992), 942-972.

\bibitem[DDdP3] {DDdP3} PG Dodds, T K.-Y Dodds and B de Pagter, Noncommutative K\"{o}the duality, 
\textit{Trans Amer Math Soc} \textbf{339}(1993), 717-750.

\bibitem[FK] {FK} T Fack and H Kosaki, Generalized s-numbers of 
$\tau$-measurable operators, \textit{Pacific J Math} \textbf{123}(1986), 
269-300.

\bibitem[GLa] {GLa} S J Goldstein and L E Labuschagne, Composition Operators on
Haagerup $L^p$-spaces, \textit{IDAQP}, to appear.

\bibitem[HJX] {HJX} U Haagerup, M Junge and Q Xu, A reduction method for noncommutative $L_p$-spaces 
and applications, preprint. (arXiv:0806.3635v1)

\bibitem[KPS] {KPS} S Krein, J Petunin and E Semenov, \emph{Interpolation of Linear Operators}, Translations of Mathematical Monographs Vol 54, AMS, 1982.  

\bibitem[KR] {KR} M A Krasnoselsky and Y B Rutitsky, \textit{Convex functions and Orlicz spaces} (translated from russian), Hindustan Publishing Corp., Delhi, 1962.

\bibitem[Kum] {Kum} R Kumar, Composition operators on Orlicz spaces, \textit{Integr 
Equ Oper Theory} \textbf{29}(1997), 17-22.

\bibitem[Kun] {Kun} W Kunze, Noncommutative Orlicz spaces and generalised Arens algebras, 
\textit{Math Nachr} \textbf{147}(1990), 123-138.

\bibitem[Lab] {L1} L E Labuschagne, Composition Operators on
Non-commutative $L^p$-spaces, \textit{Expo. Math} \textbf{17} (1999), 429--468.

\bibitem[LM] {LM} L E Labuschagne, W. A. Majewski, Quantum $L_p$ and Orlicz spaces. In: J C Garc\'{i}a , R Quezada and S B Sontz (editors), \emph{Quantum Probability $\&$ Related Topics: Proceedings of the 28th Conference}, vol XXIII, World Scientific, 2008; pp. 176-189.

\bibitem[PS] {PS} G. Pistone, C. Sempi, An infinite-dimensional geometric structure on the space of all the probability measures equivalent to a given one, \textit{ The Annals of Statistics} \textbf{23} (1995), 1543-1561.

\bibitem[PT] {PT} GK Pedersen and M Takesaki, The Radon-Nikodym theorem for 
von Neumann algebras, \textit{Acta Math} \textbf{130}(1973), 53-87.

\bibitem[SM] {SM} R K Singh and J S Manhas, \textit{Composition Operators on
  Function Spaces}, North-Holland, Amsterdam, 1993.

\bibitem[TY] {TY} H Takagi and K Yokouchi, Multiplication and
Composition Operators between Two $L^p$-spaces, \textit{Contemporary
Mathematics} \textbf{232} (1999), 321--338.

\bibitem[Tak]{Tak} M Takesaki, \textit{Theory of Operator
Algebras, Vol I,II,III}, Springer, New York, 2003.

\bibitem[Tay] {Tay} SJ Taylor, \emph{Introduction to Measure and Integration}, Cambridge University Press, Cambridge, 1966.

\bibitem[Tp] {Tp} M Terp, \textit{$L^p$ spaces associated with von
Neumann algebras}, Copenhagen University, 1981.

\bibitem[We] {We} GP West, \textit{Non-commutative Banach Function Spaces}, 
MSc Thesis, University of Cape Town, 1990.

\bibitem[Wid] {Wid} D. V. Widder, \textit{The Laplace transform}, 
Princeton University Press, 1946

\bibitem[Y] {Y} FJ Yeadon, Ergodic theorems for semifinite von Neumann algebras, \textit{J London Math Soc} 
\textbf{16}(1977), 326 -332. 

\end{thebibliography}
\end{document}